\newtheorem{theorem}{Theorem}[section]
\newtheorem{proposition}[theorem]{Proposition}
\newtheorem{corollary}[theorem]{Corollary}
\newtheorem{lemma}[theorem]{Lemma}
\newtheorem{assumption}[theorem]{Assumption}
\def\th@newremark{\th@remark\thm@headfont{\bfseries}}   
\theoremstyle{definition}
\theoremstyle{newremark}
\newenvironment{example}
  {\pushQED{\qed}\examplex}
  {\vspace{0.5cm}\popQED\endexamplex}
\newenvironment{remark}
  {\pushQED{\qed}\remarkx}
  {\vspace{0.5cm}\popQED\endremarkx}
\newenvironment{definition}
  {\pushQED{\qed}\definitionx}
  {\vspace{0.5cm}\popQED\enddefinitionx}
 \numberwithin{equation}{section}
\newcommand{\eps}{\varepsilon}
\newcommand{\e}{\varepsilon}
\def\h{h}
\def\N{\mathbb N}
\def\B{\mathcal B}
\def\M{\mathcal M}
\def\R{\mathbb R}
\def\UM{\mathbb{U}}							 	 							
\def\UMatom{\UM^a}
\def\UMcont{\UM^c}
\def\UMatomc{\UM^a_c}
\def\UMc{\UM_c}
\def\UMI{\mathbb I}
\def\UMB{\mathbb B}
\def\domF{\mathbb F}
\def\MM{\mathbb{M}}
\def\mfu{\mathfrak{u}}						 							
\def\mfm{\mathfrak{m}} 				
\def\Sk{\mathfrak{v}}
\def\rep{\mathfrak{r}}													
\def\repp{\mathfrak{q}}
\def\L{\mathcal L}
\def\U{\mathcal{U}}
\def\Cut{\Phi}							  	   					
\def\CutTwo{\hat \Phi}						
\def\Num{\mathfrak{N}}												 
\def\num{n}												  
\def\F{\mathfrak{F}}												 	
\def\NU{\mathcal{V}}
\def\f{\mathfrak{f}}												  
\def\Top{\Psi}
\def\dSK{d^{\text{SK}}} %
\def\dSKSeq{d^{\text{SK},1}}
\def\dSKMax{d^{\text{SK},\infty}}
\def\dSeq{d^1} 
\def\dMax{d^\infty} 
\def\dPr{d_{\text{Pr}}}										
\def\dGP{d_{\text{GPr}}}										
\def\dGPA{d_{\text{GPa}}}
\def\supp{\textrm{\normalfont supp}}
\newcommand{\norm}[1]{\big|\hspace{-0.05cm}\big|#1\big|\hspace{-0.05cm}\big|}
\begin{document}

 \title{Family size decomposition of genealogical trees}
 \author{Max Grieshammer\footnote{Institute for Mathematics, Friedrich-Alexander Universität Erlangen-Nürnberg, Germany;  	max.grieshammer@math.uni-erlangen.de, 
 MG was supported by DFG-Grant GR-876-17.1 of A. Greven}
 }
 
 \maketitle
 
\thispagestyle{empty}

\begin{abstract}
We study the path of family size decompositions of varying depth of genealogical trees. We prove that this decomposition 
as a function on (equivalence classes of) ultra-metric measure spaces to the Skorohod space describing the family sizes
at different depths is perfect onto its image, i.e. there is a suitable topology such that this map is continuous closed 
surjective and pre-images of compact sets are compact. We also specify a (dense) subset so 
that the restriction of the function to this subspace is a homeomorphism. This property allows us to argue that the 
whole genealogy of a Fleming-Viot process with mutation and selection as well as the genealogy in a Feller branching population can be 
reconstructed by the genealogical distance of two randomly chosen individuals.   
\end{abstract}

\medskip 

\noindent {\bf Keywords:} Genealogical distance, (ultra-)metric measure spaces, mass coalescent, family size decomposition. \\

\smallskip
\noindent {\bf AMS 2010 Subject Classification:} Primary: 60G07, 54C10; Secondary: 60J25, 60G12, 92D15;\\

\newpage

\tableofcontents
\thispagestyle{empty}

\section{Introduction}

There are several approaches to study genealogical properties of a Wright-Fisher population. 
For example, one can use the Kingman coalescent (see \cite{kingman1982coalescent}) which naturally generates the 
genealogical tree of a neutral Moran model or a neutral Wright-Fisher population at a fixed time. When selection is present, things get harder. 
This is because in contrast to the neutral model, 
the tree of the current population now depends on the whole type evolution until the present time (see Theorem 2 in \cite{DGP12}). 
Nevertheless, one can use, for example, the ancestral selection graph introduced by Krone and Neuhauser (see \cite{Krone} and \cite{neuhauser1997genealogy})
or the lookdown construction introduced by Donelly and Kurtz (see \cite{donnelly1996countable} and \cite{donnelly1999genealogical}), to construct or read off 
genealogical properties. But still, it is quite hard to get explicit results on the genealogy. \par 
Depperschmidt, Greven, Pfaffelhuber, Winter \cite{GPW09}, (compare also \cite{GPW13}; and \cite{DGP12} for a survey) followed a different approach. 
They constructed the genealogy dynamically as a tree-valued process (we explain the notion ``tree'' in more details below). This
has the advantage, that one can use the generator 
of this Markov process to get, for example, recurrence relations of the genealogical distance of two randomly chosen 
individuals in equilibrium. \\

Here we try to connect the ``classical'' approach of coalescent models and the setting in Depperschmidt, Greven, Pfaffelhuber and
Winter. Namely we ask for a quantity that both, (exchangeable) coalescents and (genealogical) trees, have in common.  \par
Recall, that one can use the Kingman coalescent to construct the genealogy in the neutral case and that the law of the Kingman coalescent 
is determined by the law of its block frequencies via Kingman's paint box construction (see for example \cite{B}).  The process associated with the evolution 
of the block frequencies is usually called mass coalescent. We will show that a tree naturally contains the concept of ``mass coalescent'', 
which we call {\it path of family size decompositions} (or {\it family size decomposition} for simplicity) in this context. Our goal in this paper is to study properties of this decomposition 
and how to apply these results to (large) Wright-Fisher populations and Feller branching populations to gain information about genealogy. \\

To get a bit more precise (see Section \ref{sec.mms} and Section \ref{sec.decomp} for all details), we consider the space $\UM$ of (equivalence classes of) ultra-metric
measure spaces $(X,r,\mu)$, where we interpret $X$ as a set of individuals, $r$ as a genealogical distance and $\mu$ as a sampling measure. We assume that $(X,r)$ is complete and 
separable and note that one can map $(X,r)$ isometrically to the leaves of a rooted $\mathbb R$-tree (see Remark 2.7 in \cite{DGP12} and Remark 2.2 in \cite{DGP11}), which 
justifies the name {\it tree} for an ultra-metric measure space. \par 
Now, we decompose $X$ into balls $\bar B_h(x) := \{y \in X:\ r(x,y) \le h\}$ for some  $h > 0$ and note that $\bar B_h(x) \cap \bar B_h(y) = \emptyset$ for $r(x,y) > h$, since $r$ is an ultra-metric,
and the number of balls needed to cover $X$ is countable, since $X$ separable. 
We can interpret $\bar B_h(x)$ as a family descending from an ancestor who lived at the time $h$ (measured backwards) (see Figure \ref{fig.F}). 
When we now calculate the sizes, $\mu(B_h(x))$, of the different families and denote the size ordered vector by 
$\underline{a}(h):=(a_1(h),a_2(h),\ldots)$, i.e. $a_k(h) \ge a_{k+1}(h)$ for all $k$, then we finally get the notion of 
family size decomposition of trees:

\begin{itemize}
 \item[]  We call the function $\F:\UM \to D((0,\infty),\mathcal S^\downarrow)$ that maps an ultra-metric measure space to the (cadlag) 
 function $h \mapsto \underline{a}(h)$ {\it family size decomposition}.
\end{itemize}
Here, 
\begin{equation}
 \mathcal S^\downarrow :=\left\{(x_1,x_2,\ldots) \in [0,\infty)^\N:\ 
\sum_{i \in \N} x_i < \infty,\ x_1 \ge x_2 \ge \ldots \right\}
\end{equation}

\begin{figure}[ht]
\begin{tikzpicture}[scale = 0.44]
\draw (4,3)-- (6,1);
\draw (6,1)-- (6,3);
\draw (6,1)-- (8,3);
\draw (10,3)-- (11,2);
\draw (11,2)-- (12.,3);
\draw (14,3)-- (12.5,-1);
\draw (12.5,-1)-- (11,2);
\draw (16,3)-- (13.2,-2);
\draw (13.2,-2)-- (12.5,-1);
\draw (13.2,-2)-- (11,-4);
\draw (11,-4)-- (6.,1);
\draw (18,3)-- (20,1);
\draw (20,1)-- (20,3);
\draw (20,1)-- (22,3);
\draw (24,3)-- (25,2);
\draw (25,2)-- (26,3);
\draw (25,2)-- (25,0);
\draw (28,3)-- (28,0);
\draw (30,3)-- (30,0);
\draw (20,1)-- (20,0);
\draw [dash pattern=on 8pt off 8pt] (25,0)-- (26.5,-1);
\draw [dash pattern=on 8pt off 8pt] (26.5,-1)-- (28,0);
\draw [dash pattern=on 8pt off 8pt] (30,0)-- (28,-2);
\draw [dash pattern=on 8pt off 8pt] (28,-2)-- (26.5,-1);
\draw [dash pattern=on 8pt off 8pt] (20,0)-- (25,-4);
\draw [dash pattern=on 8pt off 8pt] (25,-4)-- (28,-2);
\draw [dash pattern=on 8pt off 8pt] (3,0)-- (31,0);
\draw (31.5,0) node {$\mathbf{h}$};
\begin{scriptsize}
\draw [fill=black] (4,3) circle (2.0pt);
\draw [fill=black] (6,3) circle (2.0pt);
\draw [fill=black] (8,3) circle (2.0pt);
\draw [fill=black] (10,3) circle (2.0pt);
\draw [fill=black] (12,3) circle (2.0pt);
\draw [fill=black] (14,3) circle (2.0pt);
\draw [fill=black] (16,3) circle (2.0pt);
\draw [fill=black] (18,3) circle (2.0pt);
\draw [fill=black] (20,3) circle (2.0pt);
\draw [fill=black] (22,3) circle (2.0pt);
\draw [fill=black] (24,3) circle (2.0pt);
\draw [fill=black] (26,3) circle (2.0pt);
\draw [fill=black] (28,3) circle (2.0pt);
\draw [fill=black] (30,3) circle (2.0pt);
\end{scriptsize}
\end{tikzpicture}
\caption{\label{fig.F}\footnotesize On the left side we draw an ultra-metric measure space $(X,r,\mu)$, where $|X| = 7$ and 
$\mu(\{x\}) = 1$ for all $x \in X$. We can decompose this tree into four disjoint (closed) balls of radius $h > 0$ (drawn on the right side).} 
\end{figure}
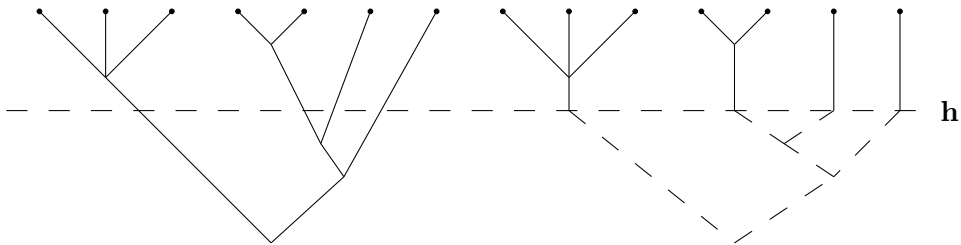

Our first goal in this paper is to study properties of this map and we can summarize our results as follows: 

\begin{itemize}
 \item[] {\it The function $\F$ is continuous, closed and preimages of compact sets are compact, i.e. $\F$ is perfect. }
\end{itemize}

Of course, we have to define suitable topologies on the respective spaces $\UM$ and $D((0,\infty),\mathcal S^\downarrow)$, so that the result is valid.
While we equip the space of cadlag functions $h \mapsto \underline{a}(h)$ with the Skorohod topology, we need to specify the topology on $\UM$. 
Typically, one would equip $\UM$ with the so called Gromov-weak topology. Convergence in this topology is equivalent to convergence of the 
corresponding distance matrix distributions $\nu^{m,(X,r,\mu)}$, where
\begin{equation}
 \nu^{m,(X,r,\mu)}(\cdot) := \mu^{\otimes m}(\{x_1,\ldots,x_{m}: (r(x_i,x_j))_{1 \le i < j \le m} \in \cdot\})
\end{equation}
(see Section \ref{sec.mms} for details) but we point out that the function $\F$ is {\it not} continuous in this topology. 
This is the reason why we need to introduce a finer topology which we call {\it Gromov-weak atomic topology}. \par 
Convergence of a sequence $\mfu_n \in \UM$, $n \in \mathbb N$ to a limit object $\mfu \in \UM$ in this new topology
is equivalent to convergence of the distance matrix distribution, i.e. convergence in the Gromov-weak topology, and convergence of 
the following quantities
\begin{itemize}
 \item[(a)] $(\nu^{2,\mfu_n})^\ast := \sum_{h \ge 0} \nu^{2,\mfu_n}(\{h\})^2 \delta_h \Rightarrow (\nu^{2,\mfu})^\ast$ as $n \rightarrow \infty$, where ``$\Rightarrow $''
 denotes the convergence in the weak topology on finite measures. 
 \item[(b)] $\nu^{2,\mfu_n}(\{0\})\rightarrow \nu^{2,\mfu}(\{0\})$ as $n \rightarrow \infty$.
\end{itemize}

The following example shows the differences between the convergence in the Gromov-weak and Gromov-weak atomic topology (see Section \ref{sec.mms} for all details).

\begin{example}(Convergence in the Gromov-weak atomic topology)\label{ex1}
	We consider the sequence  $(\{x_1,x_2,x_3\},r_n,\delta_{x_1} + \delta_{x_2} + \delta_{x_3})$,
	with 
	\begin{equation}
	\begin{split}
		&r_n(x_1,x_2) = 1, \\
		&r_n(x_1,x_3) = r_n(x_2,x_3) = 1+\frac{1}{n},\\
		&r_n(x_i,x_i) = 0 , \ i = 1,2,3
	\end{split}
	\end{equation} 
	for $n \ge 1$ and the ultra-metric measure space $(\{x_1,x_2,x_3\},r,\delta_{x_1} + \delta_{x_2} + \delta_{x_3})$, where 
	\begin{equation}
		r(x_i,x_j) = 1,\ i\not=j,\quad r(x_i,x_j) = 0,\ i = j.
	\end{equation}
		then $\mfu_n \rightarrow \mfu$ in the Gromov-weak topology (see Figure \ref{f.conv}) but $\mfu_n \not\rightarrow \mfu$ not in the 
		Gromov-weak atomic topology.\\

	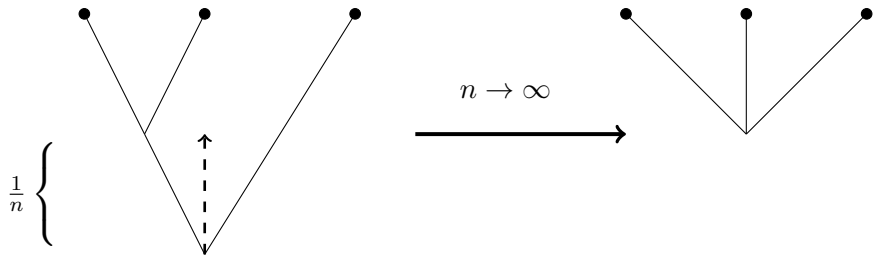
\begin{figure}[ht]
		\centering
		\begin{tikzpicture}[scale=0.8]
			\draw (5,3)-- (6,1);
			\draw (7,3)-- (6,1);
			\draw (9.5,3)-- (7,-1);
			\draw (7,-1)-- (6,1);
			\draw (14,3)-- (16,1);
			\draw (18,3)-- (16,1);
			\draw (16,3)-- (16,1);
			\draw (4.5,0)  node {$\frac{1}{n} \left\{\begin{array}{r} 
			\phantom{a} \\[0.5cm]
			\phantom{a}  \end{array}\right.$};
			\draw [->,line width=1.5pt] (10.5,1) -- (14,1);
			\draw [->,line width=1pt,dash pattern=on 4pt off 4pt] (7,-1) -- (7,1);
			\draw (12,1.7) node {$n \rightarrow \infty$};
			\begin{scriptsize}
				\draw [fill=black] (5,3) circle (2.5pt);
				\draw [fill=black] (7,3) circle (2.5pt);
				\draw [fill=black] (9.5,3) circle (2.5pt);
				\draw [fill=black] (14,3) circle (2.5pt);
				\draw [fill=black] (16,3) circle (2.5pt);
				\draw [fill=black] (18,3) circle (2.5pt);
			\end{scriptsize}	
		\end{tikzpicture}
		\caption{\label{f.conv} \footnotesize Convergence in the Gromov-weak but not in the Gromov-weak atomic topology.}
	\end{figure}
\end{example}

In view of the above example we can interpret convergence in the  Gromov-weak atomic topology  as convergence in the Gromov-weak topology plus 
some additional conditions on the convergence of the ``branching points of the tree''.  \\

Even though an ultra-metric measure space $(X,r,\mu)$ can not be reconstructed by the value $\F((X,r,\mu))$ in general (since $\F$ is not injective), we may hope that, as in the case of the Kingman coalescent,
we can find a ``nice'' subspace on which a reconstruction is possible. Indeed we prove that 
\begin{itemize}
\item[] {\it there is a dense $G_\delta $ subset of $\UM$ such that $\F$ restricted to this subspace is a homeomorphism (onto its image)} 
\end{itemize}
and call elements $\mfu$ of this subspace {\it identifiable by family sizes}.  \\

Next we want to apply our results on the tree-valued Feller diffusion and the tree-valued Fleming-Viot process and note that 
even though both processes are different, from a genealogical perspective they look quite similar: One can show that conditioned on the total mass, the genealogy in a Feller diffusion is a time inhomogeneous tree-valued Fleming-Viot process. We do not want to go into detail but refer to \cite{infdiv}, \cite{Gloede2013}, Chapter 5 or\cite{DGFeller}. The important observation is, that it is enough to consider the tree-valued Fleming-Viot process to gain genealogical information for the tree-valued Feller diffusion and vice versa. Having that in mind, we get the following result: 
\begin{itemize}
 \item[] {\it The genealogy, denoted by $\mathcal U_t$ (as an $\UM$-valued random variable), in a (large) Wright-Fisher population (with or without selection) 
 at time $t> 0$ is completely determined by the distribution of the  distance of two randomly  chosen individuals, i.e. $\mathcal L(\nu^{2,\mathcal U_t})$.}
\end{itemize}

Note that due to the above observation, this result stays valid in the situation of a tree-valued Feller diffusion. \par
In fact, we think that in the "most" infinite population models  the genealogy at a given time $t$ is identifiable by family sizes,
where we have processes in mind that arise as large population limits of graphically constructed finite population models equipped with the uniform distribution on the set of individuals.  The reason  is Proposition \ref{p.UMI.as} that mainly says that, 
whenever the vector of family sizes at some depth is absolutely continuous to the (product-)Lebesgue measure, we can reconstruct the 
whole genealogy by the path of family size decompositions of varying depth. \par 
 We also point out that the above result is not a probabilistic result in the sense that we use probabilistic arguments to reconstruct the law of the genealogy, but rather a result about 
the ``states'' of the genealogy, i.e. the genealogy realizes its values (with probability one) in a subspace of $\UM$ on which the function $\nu^{2,\cdot}$ or $\F$ is a homeomorphism (onto its image).  \par
Although we think that the function $\F$ (and the concept of family size decompositions) is an interesting object itself it can also be used 
to construct compact subsets of $\UM$ and therefore gives a tool to prove compact containment conditions for evolving genealogies (see Corollary \ref{c.perfect}). 
To be a bit more precise we have the following application in mind: (1) Construct a tree-valued process $\U^N$ via a graphical construction.
(2) Show that it has a large population limit, i.e. $\U^N \Rightarrow \U$, when $\UM$ is equipped with the Gromov-weak atomic topology. (3) Use the continuous mapping theorem to deduce that $\F(\U^N) \Rightarrow \F(\U)$ (see also Corollary \ref{c.perfect}). (4) Assume that the process $\U$ is again indexed, i.e. $\U = \U^M$, for some $M = 1,2,\ldots$ and we want to study the behavior of $\U^M$ for $M \to \infty$. 
The key idea in doing that is to observe that for an evolving genealogy $\U = (\U_t)_{t \ge 0}$ we can not only consider $\F(\U_t) = (\f(\U_t,h))_{h \ge 0}$ for fixed $t$ (which corresponds to a backward in time picture), but also $(\f(\U_t,t+h))_{t \ge 0}$ for fixed $h$ (which corresponds to a forward in time picture). It will turn out that a combination of both a backward as well as a forward in time picture can be used to get tightness of the evolving genealogies in terms of simpler processes. Roughly speaking, it is enough to prove convergence of $((\f(\U_t^M,t+h_i))_{t \ge 0})_{i = 1,\ldots,M}$, $M \in \mathbb N$ to get tightness of $\U^M$. 
The above is not quite rigorous (and will be part of an upcoming paper) but should give a hint that it is important to understand the function $\F$ in order to get results on $\UM$-valued processes. \\

At this point we should note that the results presented here are generalizations and extensions of results in \cite{Grieshammer}, 
but for the sake of completeness we include all proofs needed for the results in this paper.

\section{Metric measure spaces and the Gromov weak atomic topology}\label{sec.mms}

	Here we give the definition and basic properties of (ultra-)metric measure spaces, the subspaces we are interested in and the Gromov-weak (see \cite{DGP11} and \cite{GPW09}) and Gromov-weak atomic topology. \par 

	Recall that the support of a finite Borel measure $\mu$, denoted by $\supp(\mu)$, 
	on some separable metric space $(X,d)$ is defined as the smallest closed set $C$ with $\mu(X\backslash C) = 0$. Note that  
	$\supp(\mu)$ is also given as
	\begin{equation}\label{eq_supp}
		\supp(\mu) = \{x \in X\big|\ \forall \e >0: \ \mu(B_\e(x)) > 0\},
	\end{equation}
	where $B_\e(x)$ is the open ball of radius $\e$ around $x$. 

\begin{definition}(Metric measure spaces) \label{def.mms} 
We call the triple $(X,r,\mu)$
\begin{enumerate} 
\item  a {\it  metric measure space},	short mm-space, if 
	\begin{enumerate}
			\item[(a)] $(X,r)$ is a complete separable metric space, where we assume that $X \subset \R$ (one needs this to
			avoid set theoretic pathologies).
			\item[(b)] $\mu \in \M_f(X)$, i.e. $\mu$ is a finite measure on the Borel sets  
			generated by $r$.
	\end{enumerate} 
\item	 {\it ultra-metric}, if $	r(x_1,x_2) \le r(x_1,x_3)\vee r(x_3,x_2)$	for $ \mu$-almost all $x_1,x_2,x_3$,	
\item {\it compact}, if $\supp(\mu)$ is compact,
\item {\it purely atomic}, if $\sum_{x \in X} \mu(\{x\}) = \mu(X)$, and {\it non atomic} if $\sum_{x \in X} \tilde \mu(\{x\})= 0$. 
\item {\it identifiable (by family sizes)}, if it is ultra-metric and 
\begin{equation}
 \sum_{x \in A^1_h}\mu(\bar B_h(x)) \neq \sum_{x \in A^2_h}\mu(\bar B_{h}(x)), 
\end{equation}
 for all $h > 0$ and all measurable subsets $A^1_h,A^2_h \subset \supp(\mu)$ with 
 \begin{equation}
  (A^1_h)^h := \{y \in \supp(\mu): \exists x \in A^1_h, \ r(x,y) \le h\} \neq (A^2_h)^h
 \end{equation}
 and $x,y \in A^i_h,\ x\neq y $ implies $r(x,y) > h$, $i = 1,2$.
\item {\it (non simultaneous) binary} if  $r(x_1,x_2) = r(x_3,x_4)$ implies either $x_1 = x_3$ and $x_2 = x_4$ or $x_1 = x_4$ and $x_2 = x_3$  for $ \mu$ almost all $x_1,x_2,x_3,x_4$. 
\end{enumerate}	
We say that two mm-spaces $(X,r_X,\mu_X)$ and $(Y,r_Y,\mu_Y)$ are {\normalfont equivalent} if there is a measure-preserving isometry between this spaces, i.e. a map
	$\varphi: \supp(\mu_X) \rightarrow \supp(\mu_Y)$ with $r_X(x,y) = r_Y(\varphi(x),\varphi(y))$, 
	$x,y \in \supp( \mu_X)$ and $\mu_Y = \mu_X\circ \varphi^{-1}$.
	This property defines an equivalence relation, and we denote by $[X,r,\mu]$ the equivalence 
	class of a mm-space $(X,r,\mu)$. \par 
  We define the following sets: 
	\begin{align}
	\MM &:= \left\{[X,r,\mu]: \ (X,r,\mu) \textrm{ is a metric measure space}\right\}, \\
	\UM &:= \left\{[X,r,\mu] \in \MM: \ (X,r,\mu) \textrm{ is an ultra-metric measure space}\right\},\\
	\UMatom &:= \left\{[X,r,\mu] \in \UM: \ (X,r,\mu) \textrm{ is purely atomic}\right\},\\
	\UMcont &:= \left\{[X,r,\mu] \in \UM: \ (X,r,\mu) \textrm{ is non-atomic}\right\},\\
	\UMc &:= \left\{[X,r,\mu] \in \UM: \ (X,r,\mu) \textrm{ is compact}\right\},\\
	\UMI&:= \left\{[X,r,\mu] \in \UM: \ (X,r,\mu) \textrm{ is identifiable}\right\},\label{eq.UMI}\\
	\UMB&:= \left\{[X,r,\mu] \in \UM: \ (X,r,\mu) \textrm{ is binary}\right\}\label{eq.UMB}.
	\end{align}
	We also use combinations of the above spaces such as $\UMatomc:=\UMatom\cap \UMc$ etc. 
\end{definition}
We will typically use $\mfm$ and $\mfu$ for elements of $\MM$ and $\UM$. 

\begin{remark}
Clearly, the property of being measure preserving isometric is reflexiv and transitiv. To see that it is symmetric 
one can first show that the image $\varphi(\supp(\mu_X))$ is dense in $\supp(\mu_Y)$ and then extend the inverse to a measure-preserving
isometry $\supp(\mu_Y) \to \supp(\mu_X)$. We may therefore assume w.l.o.g. that the measure-preserving isometries are surjective. 
\end{remark}

  \begin{example}\label{ex.UMI}(Identifiable elements)
 Let $(U_i)_{i \in \mathbb N}$ be independent $\mathbb R_+$-valued random variables, which are all absolutely continuous to the Lebesgue measure
 and satisfy $\sum_i U_i < \infty$ almost surely. Let $(\mathbb N,r)$ be a complete ultra-metric space, then 
 \begin{equation}
  \left [\mathbb N,r,\sum_{i \in \mathbb N}U_i \delta_i\right] \in \UMI,\qquad \text{almost surely}.
 \end{equation}
\end{example}

\begin{definition}(Distance matrix distribution) \label{def_dmd} Let $k \in \N_{\ge 2}$, $\mfm  =[X,r,\mu]\in \MM$ and set
	\begin{equation}\label{eq_distmap}
	 R^{k,(X,r)}: \left\{
	 \begin{array}{ll}
   X^{k} \rightarrow \R_+^{\binom{k}{2}} ,  \\
	 (x_{i})_{1\le i \le k} \mapsto  (r(x_{i},x_{j}))_{1\le i < j\le k}.
	 \end{array} \right.
	\end{equation} 
	We define the {\normalfont  distance matrix distribution of order $k$} by:
	\begin{equation}\label{eq_mdmd}
	\begin{split}
	 \nu^{k,\mfm} :=  (R^{k,(X,r)})_*  \mu^{\otimes k} \in \mathcal{M}_f\left(\R_+^{\binom{k}{2}}\right), 
	\end{split}
	\end{equation}
	where $ \R_+^{\binom{k}{2}}$ is equipped with the product topology.
	For $k = 1$ we define
	\begin{equation}
	 \nu^{1,\mfm} := \overline{\mfm} := \mu(X).
	\end{equation}
\end{definition}

\begin{remark} 
	Note that $\nu^{k,\mfm}$ in the above definition does not depend on the representative $(X,r,\mu)$ of $\mfm$. In particular 
	$\nu^{k,\mfm} $ is well defined for all $k \in \mathbb N$.
\end{remark}
 
\begin{definition}(Gromov-weak topology)\label{def_GS}
	Let $\mfm,\mfm_1,\mfm_2, \ldots \in \MM$. We say $\mfm_n \rightarrow \mfm$ for $n \rightarrow \infty$ in the {\it Gromov-weak
	topology}, if
	\begin{equation}
	\nu^{k, \mfm_n} \stackrel{n \rightarrow \infty}{\Longrightarrow} \nu^{k, \mfm}
	\end{equation}
	in the weak topology on $\M_f\left( \R_+^{\binom{k}{2}}\right)$ for all $k \in \mathbb N$.
\end{definition}

\begin{remark}\label{r.totalMassCont}
Since $\overline{\mfm} = \sqrt{ \nu^{2,\mfm}(\R_+)}$, we have $\mfm \mapsto \overline{\mfm}$ is continuous in the Gromov-weak topology.
\end{remark}
	
For our results it will be necessary to introduce a finer topology:

\begin{definition}(Gromov-weak atomic topology)\label{def.GromWeakAtom}  Let $\mfu,\mfu_1,\mfu_2,\ldots \in \UM$. We say 
	$\mfu_n \rightarrow 	\mfu$ for $n \rightarrow \infty$ in the {\it Gromov-weak atomic topology}, if
	$\mfu_n \rightarrow \mfu$ for $n\rightarrow \infty$ in the Gromov-weak topology and 
	\begin{itemize}
	 \item[a)] $(\nu^{2,\mfu_n})^\ast \Rightarrow (\nu^{2,\mfu})^\ast$, where $(\nu^{2,\mfu})^\ast = \sum_{h \ge 0} \nu^{2,\mfu}(\{h\})^2 \delta_h$.
	 \item[b)] $\nu^{2,\mfu_n}(\{0\}) \rightarrow \nu^{2,\mfu}(\{0\}) $. 
	\end{itemize}
\end{definition}

\begin{remark}
 This topology is related to the so called weak atomic topology on finite measures, introduced by \cite{EKatomic}, 
 where one says that a sequence $\mu_n \in \mathcal M_f(X)$, $n \in \mathbb N$ of finite Borel-measures converges to 
 a finite Borel-measure $\mu \in \mathcal M_f(X)$ in the {\it weak atomic topology}, when $\mu_n \Rightarrow \mu$ (i.e. convergence in 
 the weak topology) and $\mu_n^\ast:= \sum_{x \in X }\mu_n(\{x\})^2 \delta_x \Rightarrow \mu^\ast$. \par 
 This explains the origin of the name ``Gromov-weak atomic''. 
\end{remark}

\begin{example}(Convergence in the Gromov-weak atomic topology - Example \ref{ex1} continued)\label{ex1.1}
	Assume we are in the situation of Example \ref{ex1}, then $\mfu_n \rightarrow \mfu$ in the Gromov-weak topology.	
	Note that
	\begin{equation}
		\begin{split}
			(\nu^{2,\mfu_n})^\ast  &= 3^2 \delta_0 + 2^2 \delta_{1} + 4^2\delta_{1+ \frac{1}{n}} \\
			(\nu^{2,\mfu})^\ast &=  3^2 \delta_0  + 6^2 \delta_{1}.
		\end{split}
	\end{equation}
	Hence 
	\begin{equation}
	(\nu^{2,\mfu_n})^\ast \Rightarrow 3^2 \delta_0 + (2^2+4^2) \delta_{1}\neq (\nu^{2,\mfu})^\ast.
	\end{equation}
	This means $\mfu_n \not\rightarrow \mfu$ in the Gromov-weak atomic topology. 
\end{example}

Now recall the definition of the Prohorov distance of two finite measures $\mu_1$ and $\mu_2$ on a metric space $(E,r)$ with Borel $\sigma$-field $\B(E)$ 
\begin{equation}\label{eq_def_Pr}
\begin{split}
\dPr(\mu_1,\mu_2):= \inf \Big\{\e >0:\ &\mu_1(A) \le \mu_2(A^\e) + \e,\\
& \mu_2(A) \le \mu_1(A^\e) + \e \ \textrm{for all } A \textrm{ closed} \Big\},
\end{split}
\end{equation}
where 
\begin{equation}
A^\e:=\Big\{x \in E:\ r(x,x') < \e, \textrm{ for some } x' \in A\Big\}. 
\end{equation}

The next proposition summarizes some important facts about the Gromov-weak topology (see \cite{DGP11} and \cite{LVW} section 2.1; compare also \cite{GPW09}).

\begin{proposition}(Properties of the Gromov-weak topology)\label{prop_M_polish}
	(a) $\MM$ equipped with the Gromov-weak topology is Polish and the subspace  $\UM \subset \MM$ 
	is  closed. \par 
	(b) An example for a complete 	metric on $\MM$ (respectively $\UM$) is the {\normalfont  Gromov-Prohorov metric} 
	$\dGP$, where for two mm-spaces $[X,r_X,\mu_X]$ and $[Y,r_Y,\mu_Y]$
	\begin{equation}
		\dGP([X,r_X,\mu_X],[Y,r_Y,\mu_Y]):= \inf_{(\varphi_X,\varphi_Y,Z)} d^{(Z,r_Z)}_{\textrm{Pr}} 
		\big(\mu_X\circ \varphi_X^{-1},\mu_Y\circ \varphi_Y^{-1} \big),
	\end{equation}
	where the infimum is taken over all isometric embeddings $\varphi_X$ and $\varphi_Y$ from $\supp(\mu_X)$ and $\supp(\mu_Y)$
	into some complete separable metric space $(Z,r_Z)$ and $d^{(Z,r_Z)}_{\textrm{Pr}}$	denotes the Prohorov distance on $\mathcal M_f(Z)$.
\end{proposition}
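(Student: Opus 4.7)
The plan is to prove (a) and (b) simultaneously by introducing the candidate metric $\dGP$ from (b) and showing that it is complete, separable, and metrises the Gromov-weak topology; closedness of $\UM$ then reduces to a portmanteau argument on $\nu^{3,\mfm}$. First I would check the metric axioms. Symmetry of $\dGP$ is built into the definition. For the triangle inequality, given near optimal isometric embeddings $\varphi_X, \varphi_Y$ witnessing $\dGP(\mfm_1, \mfm_2)$ in some $(Z_1, r_{Z_1})$ and $\varphi_Y', \varphi_W$ witnessing $\dGP(\mfm_2, \mfm_3)$ in some $(Z_2, r_{Z_2})$, I would glue $Z_1$ and $Z_2$ isometrically along their two copies of $\supp(\mu_Y)$ to form a common target $(Z, r_Z)$; the desired inequality then follows from the ordinary triangle inequality for $\dPr$ on $\M_f(Z)$. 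Positive definiteness (i.e.\ $\dGP(\mfm_1, \mfm_2) = 0$ implies $\mfm_1 = \mfm_2$) rests on the Gromov reconstruction principle: the family $(\nu^{k,\mfm})_{k \in \N}$ determines $\mfm$ up to equivalence, and $\dGP$-closeness forces closeness of every $\nu^{k,\cdot}$ as established next.

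The technical core is the equivalence between $\dGP$-convergence and convergence of all $\nu^{k,\mfm_n}$. The easy direction, from $\dGP(\mfm_n, \mfm) \to 0$ to $\nu^{k,\mfm_n} \Rightarrow \nu^{k,\mfm}$ for every $k$, is a direct computation: an $\e$-close coupling at the level of $\dPr$ on the ambient space $(Z, r_Z)$ transfers, via the continuous map $R^{k,(Z,r_Z)}$, to a Prohorov-close coupling of the push-forwards on $\R_+^{\binom{k}{2}}$. The reverse direction is the main obstacle; here my plan is a sampling argument. Draw i.i.d.\ points $(\xi_i)_{i \ge 1}$ from $\mu / \overline{\mfm}$, so that by Glivenko-Cantelli the finite empirical mm-spaces converge in $\dGP$ to $\mfm$. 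Assuming $\nu^{k,\mfm_n} \Rightarrow \nu^{k,\mfm}$ for every $k$, a Skorohod representation produces samples on each $\mfm_n$ whose distance matrices approximate those of $(\xi_i)$, and the corresponding empirical mm-spaces embed into a common ambient space witnessing a small $\dGP$-distance. Completeness follows by the same mechanism: for a $\dGP$-Cauchy sequence each $(\nu^{k,\mfm_n})_n$ is Cauchy in the Prohorov distance on $\M_f(\R_+^{\binom{k}{2}})$, hence weakly convergent to some $\nu^k$; the family $(\nu^k)_k$ is automatically symmetric and consistent under marginalisation, so the Gromov reconstruction theorem furnishes $\mfm$ realising it. Separability is immediate once one notes that finite mm-spaces with rational masses are dense.

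For the closedness of $\UM$ in $\MM$, I would observe that $\mfm \in \UM$ is equivalent to $\nu^{3,\mfm}$ being concentrated on the closed cone
\[
C := \left\{(a_{12}, a_{13}, a_{23}) \in \R_+^3 : a_{ij} \le a_{ik} \vee a_{jk} \text{ for all distinct } i, j, k \in \{1,2,3\} \right\}.
\]
If $\mfm_n \to \mfm$ Gromov-weakly with each $\mfm_n \in \UM$, applying the portmanteau theorem to the open set $U = \R_+^3 \setminus C$ gives $\nu^{3,\mfm}(U) \le \liminf_n \nu^{3,\mfm_n}(U) = 0$, so $\mfm \in \UM$. The main technical hurdle in the whole proof is the sampling and coupling step in the reverse direction of the topology equivalence; once that is in place the remaining pieces are standard bookkeeping.
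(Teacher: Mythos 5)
The paper does not actually prove this proposition; it quotes it from \cite{GPW09}, \cite{DGP11} and \cite{LVW}, so your attempt has to be measured against the standard arguments there. Your architecture — metric axioms via gluing ambient spaces along the common copy of $\supp(\mu_Y)$, equivalence of $\dGP$-convergence with convergence of all $\nu^{k,\cdot}$, an empirical/sampling argument for the hard direction, and a portmanteau argument on $\nu^{3,\cdot}$ for closedness of $\UM$ — is essentially the route of \cite{GPW09} (their Lemma 5.4, Theorem 5 and Proposition 7.1). The closedness argument is correct as written: since the paper defines the ultrametric inequality to hold for $\mu$-a.a.\ triples, $\mfm\in\UM$ is precisely the statement that $\nu^{3,\mfm}$ charges only the closed cone $C$, and portmanteau applied to the open complement finishes it.

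Two steps carry genuine gaps. First, completeness. From a $\dGP$-Cauchy sequence you extract weak limits $\nu^k$ of the $\nu^{k,\mfm_n}$ and then invoke ``the Gromov reconstruction theorem'' to furnish a limit space. That theorem is a \emph{uniqueness} statement (equal distance matrix distributions force equivalence); it does not assert that every symmetric, consistent family is realized by an mm-space, and in general it is not: for $\mfm_n$ consisting of $n$ points of mass $1/n$ at mutual distance $1$, one has $\nu^{k,\mfm_n}\Rightarrow\delta_{(1,\ldots,1)}$ for every $k$, a perfectly consistent family realized by no separable mm-space (it would require a non-atomic $\mu$ whose support is discrete). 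So the Cauchy property must be used to build the limit object itself, not merely its putative distance matrix distributions — e.g.\ pass to a subsequence with $\dGP(\mfm_n,\mfm_{n+1})<2^{-n}$, glue the witnessing ambient spaces iteratively into one complete separable $(Z,r_Z)$, obtain a $\dPr$-Cauchy sequence of measures there, and take its weak limit. Second, in the reverse implication of the topology equivalence your coupling needs $\dGP$ between $\mfm_n$ and its $N$-point empirical space to be small \emph{uniformly in $n$} once $N$ is large; this is exactly the uniform modulus-of-mass-distribution control (Proposition 7.1 of \cite{GPW09}, quoted in Section 8.2 of this paper) that has to be extracted from the convergence of the $\nu^{k,\mfm_n}$ before the Skorohod/Glivenko–Cantelli step closes. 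As stated, your Glivenko–Cantelli appeal is only pointwise in $n$. A minor point: for separability the approximating finite spaces need rational distance matrices as well as rational masses.
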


We close this section with some properties of the Gromov-weak atomic topology: 

\begin{theorem}(Properties of the Gromov-weak atomic topology)\label{thm_polish}
 If we equip $\UM$ with the Gromov-weak atomic topology then the following holds (recall \eqref{eq.UMI}): 
\begin{itemize}
\item[(a)] $\UM$  is a Polish space, 
\item[(b)] $\UMI \subset \UM$ is dense. 
\item[(c)] Let $\UMc$ be equipped with the subspace topology, then $\UMB \subset \UMc$ is closed.
\end{itemize}
\end{theorem}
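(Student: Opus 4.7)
My plan is to handle the three parts separately, using as foundations the Polishness of $(\UM, \dGP)$ from Proposition \ref{prop_M_polish} and the fact from \cite{EKatomic} that the weak atomic topology on $\mathcal{M}_f(\mathbb R_+)$ is Polish.

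For part (a), I would realize $(\UM, \text{Gromov-weak atomic})$ as a closed subspace of the Polish product
\begin{equation}
(\UM,\dGP)\, \times\, (\mathcal{M}_f(\mathbb R_+), \text{weak atomic})\, \times\, \mathbb R
\end{equation}
via the map $\iota(\mfu) := (\mfu,\ \nu^{2,\mfu},\ \nu^{2,\mfu}(\{0\}))$. By Definition \ref{def.GromWeakAtom}, the pullback of the product topology along $\iota$ is exactly the Gromov-weak atomic topology. Closedness of $\iota(\UM)$ follows because, along a convergent sequence in the image, weak convergence of $\nu^{2,\mfu_n}$ coming from both the first and the second coordinate forces the middle limit to agree with $\nu^{2,\cdot}$ of the first-coordinate limit, while continuity of the atom-at-zero functional under weak atomic convergence pins down the third coordinate. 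Polishness of $\UM$ then follows as a closed subspace of a Polish product.

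For part (b), given $\mfu = [X,r,\mu] \in \UM$ and $\eps > 0$, I would first approximate $\mfu$ in the Gromov-weak atomic topology by a finite purely atomic ultra-metric tree $\tilde{\mfu}$ with $n$ atoms of masses $a_1, \ldots, a_n$ (choosing $n$ large enough to make the relevant atomic corrections small). Following Example \ref{ex.UMI}, I then perturb the masses to $a_i + \eps U_i$ with independent $U_i$ having bounded densities. The identifiability condition on a finite tree reduces, at each of the finitely many relevant depths $h$, to requiring that distinct subset sums of the masses be distinct; each violation is a hyperplane of Lebesgue measure zero in $\mathbb R^n$, so a generic realization of the perturbation yields a tree in $\UMI$ arbitrarily close to $\mfu$.

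For part (c), I would express the binary condition as an equality of Gromov-weak atomic continuous functionals on $\UMc$. Integrating the defining condition against $\mu^{\otimes 4}$ and applying inclusion-exclusion yields
\begin{equation}
\mfu \in \UMB \iff (\nu^{2,\mfu})^{\ast}(\mathbb R_+) = 2\, \nu^{2,\mfu}(\{0\})^2 - \textstyle\sum_x \mu(\{x\})^4.
\end{equation}
The left side and the first right-hand term are continuous on $\UMc$ by Definition \ref{def.GromWeakAtom}. The main obstacle is continuity of the last term $\sum_x \mu(\{x\})^4$: weak atomic convergence of $\nu^{2,\mfu_n}$ controls $\sum_x \mu_n(\{x\})^2$ directly (via the atom at $0$), but not its fourth-power analogue. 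I would address this either via an auxiliary atom-preservation lemma on $\UMc$ (so atom masses of $\mu_n$ converge individually under Gromov-weak atomic convergence, exploiting compactness of the supports), or by a reformulation using higher-order distance matrix distributions $\nu^{k,\mfu}$ for $k \ge 4$ whose atoms at the origin of $\mathbb R_+^{\binom{k}{2}}$ encode precisely $\sum_x \mu(\{x\})^k$.
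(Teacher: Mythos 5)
Your part (b) follows essentially the paper's own route (finite purely atomic approximation, then a generic perturbation of the masses so that distinct subsets have distinct sums, cf.\ Lemma \ref{l.Ielements} and Example \ref{ex.UMI}) and is fine as a sketch. Parts (a) and (c) each contain a genuine problem.

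In (a), the embedding $\iota$ induces the right topology, but its image is \emph{not} closed, and the step you invoke to close it fails: the functional $\nu\mapsto\nu(\{0\})$ is not continuous for the weak atomic topology, since $\delta_{1/n}\to\delta_0$ weak atomically (atoms may move, they only may not merge or split) while $\delta_{1/n}(\{0\})=0\not\to 1$. Concretely, let $\mfu_k$ consist of $k$ points at mutual distance $1/k$, each of mass $a/k$, and let $\mfu$ be a single atom of mass $a$. Then $\dGP(\mfu_k,\mfu)\to 0$ and $\nu^{2,\mfu_k}=\tfrac{a^2}{k}\delta_0+a^2(1-\tfrac1k)\delta_{1/k}\to a^2\delta_0=\nu^{2,\mfu}$ in the weak atomic topology, but $\nu^{2,\mfu_k}(\{0\})=a^2/k\to 0\neq a^2=\nu^{2,\mfu}(\{0\})$; hence $\iota(\mfu_k)\to(\mfu,\nu^{2,\mfu},0)\notin\iota(\UM)$. (This example is precisely why condition b) of Definition \ref{def.GromWeakAtom} is not implied by Gromov-weak convergence together with condition a).) To repair the argument you must either show that $\iota(\UM)$ is a $G_\delta$ in the product (the functional $\mfu\mapsto\nu^{2,\mfu}(\{0\})$ is only upper semicontinuous along sequences satisfying the other two conditions), or work with an explicit metric and its completeness directly, which is what the paper does via $\dGPA(\mfu,\mfu')=\dGP(\mfu,\mfu')+|\nu^{2,\mfu}(\{0\})-\nu^{2,\mfu'}(\{0\})|+\rho_a(\nu^{2,\mfu},\nu^{2,\mfu'})$.

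In (c) your route is genuinely different from the paper's, which characterizes $\UMB\cap\UMc$ by the property that the number of balls $\num(h)$ decreases by at most one at every $h>0$ and then propagates this to the limit using the continuity of $\F$. Your algebraic identity transcribes the definition of binary literally, including the degenerate tuples with $x_1=x_2$, $x_3=x_4$, and this clashes with the notion the paper actually uses: for two atoms of mass $\tfrac12$ at distance $1$ one computes $(\nu^{2,\mfu})^{\ast}(\R_+)=\tfrac12$ while $2\,\nu^{2,\mfu}(\{0\})^2-\sum_x\mu(\{x\})^4=\tfrac38$, yet this space is binary in the paper's sense (and must be, since the Fleming--Viot states are purely atomic with many atoms). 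The identity has to be restricted to tuples with $r(x_1,x_2)=r(x_3,x_4)>0$, which changes both sides. On the positive side, the continuity of $\sum_x\mu(\{x\})^4$, which you leave open, does follow from Lemma \ref{l.Gwa.embedding} combined with Lemma 2.5 of \cite{EKatomic} (no merging of atoms under weak atomic convergence), so that part of your plan is salvageable; but as written the characterization of $\UMB$ is wrong and the proof is incomplete.
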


\begin{remark}\label{rem.Borel}
(1) As in \cite{EKatomic} (see the discussion after (2.3)), the Borel sets generated by the Gromov-weak topology coincide with the Borel-sets generated by the 
Gromov-weak atomic topology.\par 
(2)  $\UMc \subset \UM$ is measurable in the Gromov-weak topology and therefore measurable in the Gromov-weak atomic topology (see Remark 2.8 and Corollary 3.6 in \cite{ALW}). 
\end{remark}

\section{Family size decomposition of ultra-metric measure spaces}\label{sec.decomp}

We will now introduce the function $\F$ that gives the size of the different families of an ultra-metric measure space $\mfu$.

\subsection{Definitions}

We start with the following Lemma, that gives us the existence of an ``almost surely'' disjoint decomposition  of an ultra-metric measure space into  closed balls. 

\begin{lemma}\label{lem_representatives} 
	Let $0< h$, $\mfu = [X,r,\mu] \in \UM$ and  $\bar B_h (x)$ be the closed ball of radius 
	$\le h$ around $x \in X$. Then there is a $\num(h) \in \N \cup \{\infty\}$ and a family 
	$\{\rep_i^\h: \ i \in \{1,2,\ldots,\num(h)\}\}$ of elements of $\supp(\mu)$ with
	\begin{equation}
			\mu\big(\bar B(\rep_i^h,h) \cap \bar B(\rep_j^h,h)\big) = 0,
	\end{equation}
	for $i \neq j$ and
	\begin{equation}
			\mu(X) = \sum_{i =1}^{\num(h)} \mu\big(\bar B(\rep_i^h,h)\big).
	\end{equation}
	Moreover, if $0< \delta \le h$, then  there is a partition $\{I_i\}_{i \in 1,\ldots,n(h)}$ of $\{1,\ldots,\num(\delta)\}$ such that
	\begin{equation}
		\mu(\bar B(\rep_i^h,h))= \sum_{j \in I_i}\mu(\bar B(\rep_j^\delta,\delta)),\qquad \forall i = 1,\ldots,\num(h).
	\end{equation}
\end{lemma}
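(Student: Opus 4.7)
The plan is to exploit the fact that in an ultra-metric space the relation $x \sim_h y \iff r(x,y) \le h$ is an equivalence relation whose classes are (relatively) open balls; separability of $\supp(\mu)$ then forces at most countably many classes and supplies the required representatives.

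First I would promote the ultra-metric property from ``$\mu$-a.s.'' to holding pointwise on $\supp(\mu)$. By Definition \ref{def.mms}(2) there is a Borel set $A\subset X$ with $\mu(X\setminus A)=0$ on which the ultra-metric inequality is satisfied for every triple. Since $\supp(\mu)$ is the smallest closed set of full measure, $\supp(\mu)\subset \overline{A}$, and joint continuity of $r$ extends the inequality from $A$ to all of $\supp(\mu)$. All further work takes place on $\supp(\mu)$.

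Next, $\sim_h$ is reflexive and symmetric by inspection and transitive by the now pointwise ultra-metric inequality; its equivalence class of $x$ is exactly $\bar B_h(x)\cap \supp(\mu)$. For $y\in \bar B_h(x)\cap \supp(\mu)$ and $z\in B_h(y)\cap \supp(\mu)$ (open ball) the ultra-metric gives $r(x,z)\le r(x,y)\vee r(y,z)\le h$, so $B_h(y)\cap \supp(\mu)\subset \bar B_h(x)\cap \supp(\mu)$. Hence each class is open in $\supp(\mu)$. Since $\supp(\mu)$ is separable, the disjoint family of nonempty classes is at most countable; list them $C_1,C_2,\ldots$, set $\num(h):=|\{C_i\}|\in \N\cup\{\infty\}$, and pick $\rep_i^h\in C_i$.

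I then check the listed properties. For $i\neq j$, any point in $\bar B_h(\rep_i^h)\cap \bar B_h(\rep_j^h)\cap \supp(\mu)$ would force $\rep_i^h\sim_h \rep_j^h$ via the ultra-metric inequality, contradicting $C_i\neq C_j$; the intersection therefore lies in $X\setminus \supp(\mu)$, a $\mu$-null set, so $\mu(\bar B_h(\rep_i^h)\cap \bar B_h(\rep_j^h))=0$. Since $\supp(\mu)=\bigsqcup_i C_i=\bigsqcup_i(\bar B_h(\rep_i^h)\cap \supp(\mu))$, summing measures gives $\mu(X)=\mu(\supp(\mu))=\sum_i \mu(C_i)=\sum_i \mu(\bar B_h(\rep_i^h))$. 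For the refinement statement with $\delta\le h$, observe that $\sim_\delta$ refines $\sim_h$, so each $\sim_h$-class $C_i$ is a disjoint union of $\sim_\delta$-classes $C_j^\delta$; defining $I_i:=\{j: C_j^\delta\subset C_i\}$ yields a partition of $\{1,\ldots,\num(\delta)\}$, and summing measures along it, modulo the $\mu$-null pairwise $\delta$-ball intersections, gives the claimed identity.

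The only mildly delicate step is the first, needed precisely because the ultra-metric property in Definition \ref{def.mms} is merely almost sure; once that has been upgraded to hold pointwise on $\supp(\mu)$, the rest is the standard observation that balls in an ultra-metric space form a laminar family, together with a separability count.
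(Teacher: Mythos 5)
Your proposal is correct in structure and takes a genuinely different route from the paper's. The paper never upgrades the almost-sure ultra-metric property to a pointwise one: it covers $\supp(\mu)$ by countably many closed $h$-balls centred in a countable dense set, extracts via Zorn's lemma a maximal subfamily whose balls are pairwise $\mu$-almost disjoint, and shows by maximality that this family exhausts the mass; the refinement statement for $\delta\le h$ is then a separate measure-theoretic argument (defining $I_i$ by positivity of the measure of the intersections and showing the leftover set is null). You instead turn $r$ into a genuine ultra-metric on $\supp(\mu)$, after which the closed balls restricted to the support are the classes of an equivalence relation, and the pairwise null intersections, the exhaustion of mass, and the refinement partition all follow from honest disjointness plus countable additivity. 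This is arguably cleaner, replaces Zorn's lemma by relative openness of the classes plus separability, and makes the laminar structure transparent --- at the cost of the upgrading step.

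That upgrading step is the one place where your justification does not go through as written. Definition \ref{def.mms}(2) says that the set $F:=\{(x_1,x_2,x_3):\ r(x_1,x_2)>r(x_1,x_3)\vee r(x_3,x_2)\}$ satisfies $\mu^{\otimes 3}(F)=0$; it does \emph{not} yield a Borel set $A\subset X$ with $\mu(X\setminus A)=0$ and $A^3\cap F=\emptyset$. For a general $\mu^{\otimes 3}$-null subset of a product no such $A$ need exist (consider the diagonal in $[0,1]^3$ with Lebesgue measure), so this deduction is invalid as stated, and your subsequent density-plus-continuity extension rests on it. The conclusion you want is nevertheless true and has a one-line proof that bypasses $A$ entirely: $F$ is open by continuity of $r$ (it is defined by a strict inequality between continuous functions), and an open $\mu^{\otimes 3}$-null set must be disjoint from $\supp(\mu^{\otimes 3})=\supp(\mu)^3$ by the characterization \eqref{eq_supp}. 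Hence the ultra-metric inequality holds for \emph{every} triple in $\supp(\mu)$. With the first step repaired in this way, the remainder of your argument is correct.
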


\begin{remark}\label{rem_pos} 
	(i) By the definition of the support we get $ \mu(\bar B(\rep_i^h,h)) >0$ for all $i \in \{1,\ldots,\num(h)\}$. \par 
	(ii) The analogue of Lemma \ref{lem_representatives}  holds if we replace $\le h$ by $< h$. 
\end{remark}

\begin{remark}\label{rem.reconstruction}
Another important observation is the following: Given a finite ultra-metric space $(\{1,\ldots,\num(0)\},r) = (X,r)$, then the path of partitions 
$h \mapsto (\{I_i^h\}_{i = 1,\ldots,\num(h)})$ of $\{1,\ldots,\num(0)\}$ contains all information of the metric $r$, i.e. the function that maps an ultra-metric $r$ on $X$ to the path of partitions is an injection and given an element $(h \mapsto \pi^h)$ contained in the range of this map, the corresponding metric $r$ can be reconstructed by 
\begin{equation}
r(k,l) := \inf\{h > 0: \ k,l \in \pi^h_i \text{ for some } i = 1,\ldots,\num(h)\}, 
\end{equation}  
for $k,l \in \{1,\ldots,\num(0)\}$.
\end{remark}

\noindent Let $C \ge 0$ and set 
\begin{align}
\mathcal S^\downarrow_C&:=\left\{(x_1,x_2,\ldots) \in [0,\infty)^\N:\ \sum_{i \in \N} x_i \le C,\ x_1 \ge x_2 \ge \ldots \right\},\\
\mathcal S^\downarrow&:=\left\{(x_1,x_2,\ldots) \in [0,\infty)^\N:\ 
\sum_{i \in \N} x_i < \infty,\ x_1 \ge x_2 \ge \ldots \right\}.
\end{align}
\noindent We consider the following two distances on $\mathcal S^\downarrow_C$ and $\mathcal S^\downarrow$:
\begin{equation}\label{eq_metric_S}
\dSeq(x,y) = \sum_{i = 1}^\infty |x_i - y_i| = \norm{x - y}_1
\end{equation}
and
\begin{equation} 
\dMax(x,y) = \max_{i \in \mathbb N} |x_i - y_i|.
\end{equation}
We note that $\mathcal S^\downarrow_C$ and $\mathcal S^\downarrow$ are typically equipped with the $\ell^1$-distance, $\dSeq$.

\begin{definition}(Definition of $\f$) \label{defin_basic}
 Let $\mfu \in \UM$.  We define the map $\f(\mfu,\cdot): (0,\infty) \rightarrow \mathcal S^\downarrow$,
				\begin{equation}
						\f(\mfu,h) = (a_1(h),a_2(h),\ldots),
				\end{equation}
				where the $a_k(h)$ are given by 
				\begin{equation}
						\begin{split}\label{eq_ord}
								a_k(h) &= \max\left\{c \ge 0:\ \sum_{i=1}^{\num(h)}  \mathds{1}(\mu(\bar B(\rep_i^h,h)) \ge c) \ge k\right\}, 
								\quad k = 1,2,\ldots,\num(h), \\
								a_k(\h) &= 0,\quad \textrm{ for } k > \num(h). 
						\end{split}
				\end{equation}
				Note that $a_k(h) \ge a_{k+1}(h)$ is the non-increasing reordering of the sequence $(\mu(\bar B(\rep_i^h,h)))_{i = 1,\ldots,\num(h)}$.
\end{definition}

\begin{remark}\label{rem_ind} ${}$
	\begin{itemize}
		\item[(i)] Let $(X,r_X,\mu_X)$ and $(Y,r_Y,\mu_Y)$ be two equivalent ultra-metric measure spaces and let 
				$\varphi: \supp(\mu_X) \rightarrow \supp(\mu_Y)$ be a measure preserving isometry. 
				Then $\{\rep_i^h: \ i \in \{1,2,\ldots,\num(h)\}\} \subset \supp(\mu_X)$  satisfies the conditions in Lemma 
				\ref{lem_representatives} if and only if $\{\varphi(\rep_i^h): \ i \in \{1,2,\ldots,\num(h)\}\} \subset \supp(\mu_X)$ 
				satisfies the conditions.
		\item[(ii)] If $x \in \supp(\mu_X)$ and $h >0$, then  there is exactly one $i \in \{1,\ldots,\num(h)\}$ with
				\begin{equation}
						 \mu_X(\bar B(\rep_i^h,h)) =\mu_X(\bar B(\rep_i^h,h) \cap \bar B(x,h))  =\mu_X(\bar B(x,h)).
				\end{equation}
	\end{itemize}
	As a consequence, the definition of $\f$ does not depend on the representatives. 
\end{remark}

Note that  the domain of $\f(\mfu,\cdot)$ is $(0,\infty)$. In some cases it is also possible to add $0$ to the domain and we close this section with the following remark:  
\begin{remark}\label{rem_extentzero}
	In the case, where $\mfu \in \UMatom$ is purely atomic  we can extend the function $\f(\mfu,\cdot)$ to a function 
	$\hat \f(\mfu,\cdot) :[0,\infty) \rightarrow \mathcal S^\downarrow$. 
\end{remark}

\subsection{Results} 

We start with the following definition:

\begin{definition}(Definition of $\F$)\label{def_calF}
	We define
	\begin{equation}
	\F:\UM \rightarrow (\mathcal S^{\downarrow})^{(0,\infty)}, \quad \mfu \mapsto \f(\mfu,\cdot).
	\end{equation}
\end{definition}

The first observation is, that $\F$  maps ultra-metric measure spaces to cadlag (i.e. right continuous with left limits) functions:

\begin{lemma}\label{l.F.cadlag}
$\domF:= \F(\UM) \subset D((0,\infty),\mathcal S^\downarrow)$, where $\mathcal S^\downarrow$ is equipped with $\dSeq$.
\end{lemma}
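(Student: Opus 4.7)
The plan is to show that, for every $\mfu = [X,r,\mu] \in \UM$, the map $h \mapsto \f(\mfu,h)$ is right-continuous with left limits in the $\ell^1$-metric $\dSeq$ on $\mathcal S^\downarrow$. The key structural input comes from the ultra-metric inequality together with Lemma~\ref{lem_representatives}: for $0<h\le h'$, each closed $h'$-ball $\bar B(\rep_j^{h'},h')$ is a disjoint (up to $\mu$-nullsets) union of some closed $h$-balls, so there is a partition $\{J_k\}$ of $\{1,\dots,\num(h)\}$ with $\mu(\bar B(\rep_k^{h'},h')) = \sum_{i\in J_k}\mu(\bar B(\rep_i^h,h))$. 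In other words, $\f(\mfu,h')$ is obtained from $\f(\mfu,h)$ by merging entries along $\{J_k\}$ and re-sorting. Combined with the standard fact that the decreasing rearrangement is an $\ell^1$-contraction (a version of the Hardy--Littlewood--P\'olya inequality), each of the two claims below reduces to a simple summation estimate.

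For right continuity at $h>0$, take $h_n\downarrow h$ and let $\{J_k^{(n)}\}$ be the corresponding merge partition from level $h$ to level $h_n$. Setting $S_n:=\{i:|J_{k(i)}^{(n)}|\ge 2\}$, the fact that the $\rep_i^h$ are pairwise at distance $>h$ (Lemma~\ref{lem_representatives}) implies that for each fixed pair $(i,j)$, $h_n < r(\rep_i^h,\rep_j^h)$ eventually, so $S_n\downarrow\emptyset$; combined with $\sum_i a_i(h)=\mu(X)<\infty$ and dominated convergence, $\sum_{i\in S_n}a_i(h)\to 0$. Comparing the unsorted level-$h$ vector $(a_i(h))_i$ with the unsorted level-$h_n$ vector obtained by concentrating the merged mass on one index per non-trivial $J_k^{(n)}$ and putting zeros at the remaining indices in $S_n$, one gets an unsorted $\ell^1$-distance of at most $2\sum_{i\in S_n}a_i(h)$. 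The rearrangement contraction yields $\dSeq(\f(\mfu,h_n),\f(\mfu,h))\le 2\sum_{i\in S_n}a_i(h)\to 0$.

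For left limits at $h>0$, use the open-ball analogue of Lemma~\ref{lem_representatives} (Remark~\ref{rem_pos}(ii)) to decompose $\supp(\mu)$ into open balls $B(y_m,h)=\{y:r(y_m,y)<h\}$ of masses $b_m:=\mu(B(y_m,h))$ summing to $\mu(X)$, and declare $\f(\mfu,h-)\in\mathcal S^\downarrow$ to be the sorted sequence $(b_m)_m$. For $h_n\uparrow h$, every closed $h_n$-ball lies inside exactly one $B(y_m,h)$ (since $\bar B(x,h_n)\subseteq B(x,h)$ and open $h$-balls form an equivalence-class partition via $r(\cdot,\cdot)<h$). Inside each $B(y_m,h)$ the ``dominant'' sub-block $\bar B(y_m,h_n)$ has mass $c_m(n):=\mu(\bar B(y_m,h_n))\uparrow b_m$ by continuity from below (using $\bar B(y_m,h_n)\uparrow B(y_m,h)$), while the remaining $h_n$-sub-blocks inside $B(y_m,h)$ carry total mass $b_m-c_m(n)$. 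Matching dominant sub-blocks to $b_m$ and leftover sub-blocks to $0$, the unsorted $\ell^1$-distance is at most $2\sum_m(b_m-c_m(n))$, which tends to $0$ by dominated convergence with dominator $(b_m)\in \ell^1$; rearrangement contraction then gives $\dSeq(\f(\mfu,h_n),\f(\mfu,h-))\to 0$.

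The only real obstacle is the bookkeeping that converts the purely combinatorial merge/refinement structure of the ball partitions $\pi(h)$ into an $\ell^1$-bound on the sorted sequences; the hierarchical structure forced by the ultra-metric, together with the $\ell^1$-contractivity of the decreasing rearrangement, makes both estimates routine once the right matching of indices is set up.
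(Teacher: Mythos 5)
Your treatment of the left limits is correct, and it is in fact more explicit than the paper's: you identify $\f(\mfu,h-)$ directly via the open-ball decomposition of Remark \ref{rem_pos}(ii) and give a concrete $\ell^1$ estimate $2\sum_m(b_m-c_m(n))\to 0$, whereas the paper deduces existence of left limits from the monotonicity of the partial sums $\sum_{i\le M}\f(\mfu,h')_i$ in $h'$ (Lemma \ref{lem_monoton}) combined with the relative compactness of Proposition \ref{thm_basic_tools}(i). That half stands.

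The right-continuity half has a genuine gap: the inference from ``each fixed pair $(i,j)$ is eventually separated'' to ``$S_n\downarrow\emptyset$'' fails when $\num(h)=\infty$, because a fixed level-$h$ ball can remain in a non-trivial merge class for every $n$ by absorbing a \emph{different}, ever-closer ball at each level. Concretely, take $X=\{x_0,x_1,x_2,\dots\}$ with $r(x_0,x_k)=h+2^{-k}$ and $r(x_j,x_k)=h+2^{-(j\wedge k)}$ for $1\le j<k$ (a complete ultra-metric ``comb''), and $\mu=\sum_{k\ge 0}2^{-k}\delta_{x_k}$. At level $h$ all balls are singletons, while at level $h_n=h+2^{-n}$ the class of $x_0$ is $\{x_0\}\cup\{x_k:k\ge n\}$; hence $0\in S_n$ for all $n$ and $\sum_{i\in S_n}a_i(h)\to\mu(\{x_0\})=1>0$, so your bound $2\sum_{i\in S_n}a_i(h)$ is valid but does not vanish (right continuity does hold here, with actual distance $2\sum_{k\ge n}2^{-k}\to 0$). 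The argument can be repaired inside your framework: choose $i_0(k)$ to be an index of \emph{maximal} mass in $J_k^{(n)}$, so the unsorted distance is bounded by $2\bigl(\mu(X)-\sum_k\max_{i\in J_k^{(n)}}a_i(h)\bigr)$, and show $\sum_k\max_{i\in J_k^{(n)}}a_i(h)\uparrow\mu(X)$ by first fixing a finite index set carrying all but $\eps$ of the mass — for finitely many indices the pairwise-separation argument \emph{does} give eventual separation into distinct classes. The paper avoids this issue by combining coordinatewise convergence (from $\sigma$-continuity of $\mu$ along $\bar B(\rep_i^h,h)=\bigcap_n\bar B(\rep_i^h,h_n)$) with the uniform tail bound \eqref{eq_uni} of Proposition \ref{thm_basic_tools}(i) via Remark \ref{rem_equinorms}; either way, some uniform control of the small-mass tail is indispensable and is exactly what your dominated-convergence step skips.
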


In the following 

\begin{itemize}
\item[] $D((0,\infty),\mathcal S^\downarrow)$ is always equipped with the Skorohod topology, given in Appendix \ref{sec.Skorohod}.
\end{itemize}

Now the question is whether $\F$ is continuous when $\UM$ is equipped with the Gromov-weak topology. 

\begin{example} Assume we are in the situation of Example \ref{ex1}.  Observe that if we  take for example 
	$t_n = 1+\frac{1}{n} \rightarrow 1$ then 
	\begin{equation}
	\begin{split}
	\f(\mfu_n,t_n) &\equiv \left(2,1,0,\ldots\right) \\
	&\not\in \left\{\left(1,1,1,0\ldots\right),\left(3,0,0,\ldots\right) \right\} = 
	\left\{\f(\mfu,1),\f(\mfu,1-)\right\},
	\end{split}
	\end{equation}
	i.e. $\f(\mfu_n,\cdot) \not\rightarrow \f(\mfu,\cdot)$ in the Skorohod topology (see Proposition 3.6.5 in \cite{EK86}). 
\end{example}

	In other words we can not expect $\F$ to be continuous, when $\UM$ is equipped with the Gromov-weak topology. But as we have seen in 
	Example \ref{ex1}, the sequence $\mfu_n$ does not converge in the Gromov-weak atomic topology and in fact, this is the reason why we introduced this new topology. \\
	
	Recall that  a function $f: X \to Y$ between two topological spaces is called \emph{perfect}, 
if it is continuous, surjective, closed (i.e. maps closed sets to closed sets) and $f^{-1}(\{y\})$ is compact in $X$ for all $y \in Y$. We remark the following: 

\begin{remark}\label{r.perfect} If $X$ is a topological space and $Y$ is a compactly generated Hausdorff space (for example a metric space) and $f: X \to Y$ is surjective, then the following is equivalent (see for example \cite{palais1970proper}): 
\begin{itemize}
\item[(i)] $f$ is perfect,
\item[(ii)] $f$ is continuous and proper, i.e. $f^{-1}(K)$ is compact in $X$ for all compact sets $K \subset Y$. 
\end{itemize}
Note that a perfect map is also a quotient map, i.e. surjective and $f^{-1}(U)$ is open in $X$ iff $U$ is open in $Y$. 
\end{remark}

\begin{theorem}(Properties of $\F$)\label{thm.perfect}  
Let $\UM$ be equipped with the Gromov-weak atomic topology, then $\F: \UM \rightarrow \domF$ has the following properties: 
\begin{itemize}
\item[i)] $\F$ is perfect. 
\item[ii)] The restriction $\F|_{\UMI}$ of $\F$ to $\UMI$ (see Definition \ref{def.mms}) is a homeomorphism onto its image. 
\end{itemize}
\end{theorem}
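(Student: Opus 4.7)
The plan is to establish (i) by proving continuity and properness of $\F$ separately, which together with the trivial surjectivity onto $\domF:=\F(\UM)$ and Remark \ref{r.perfect} give perfectness. Then (ii) will follow from a direct injectivity argument on $\UMI$ together with properness from (i).

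For \textbf{continuity}, let $\mfu_n \to \mfu$ in the Gromov-weak atomic topology. I would first show that for any $h > 0$ with $\nu^{2,\mfu}(\{h\}) = 0$ (which coincides with $h$ being a continuity point of $\f(\mfu,\cdot)$), one has $\f(\mfu_n,h) \to \f(\mfu,h)$ in $(\mathcal S^\downarrow,\dSeq)$. The bridge is the identity, valid in ultra-metric spaces, $\sum_k a_k(h)^\ell = \nu^{\ell,\mfu}\bigl(\{(r_{ij}): r_{ij} \le h \text{ for all } i<j\le \ell\}\bigr)$ for every $\ell \ge 1$, combined with $\overline{\mfu}=\sum_k a_k(h)$ from Remark \ref{r.totalMassCont}. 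Gromov-weak convergence gives the right-hand sides converging at such $h$ (using that $h$ is a continuity point of the marginal cumulative mass), while conditions (a) and (b) of Definition \ref{def.GromWeakAtom} supply the missing control of atomic parts on the diagonal and at isolated $h$'s; a moment-matching/decreasing-rearrangement argument then extracts convergence of the individual coordinates in $\dSeq$. Once pointwise convergence is secured on a dense set of continuity points, Skorohod convergence in $D((0,\infty),\mathcal S^\downarrow)$ follows from a standard finite-dimensional criterion (Proposition 3.6.5 in \cite{EK86}), the jump locations and jump sizes being controlled via $(\nu^{2,\mfu_n})^\ast \Rightarrow (\nu^{2,\mfu})^\ast$.

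For \textbf{properness}, I would characterize compact subsets of $\UM$ under the Gromov-weak atomic topology (Gromov-Prohorov tightness of the $\nu^{2,\cdot}$ together with tightness of $(\nu^{2,\cdot})^\ast$ and uniform bounds on $\nu^{2,\cdot}(\{0\})$), and compact subsets of $\domF$ through the classical Skorohod compactness criterion adapted to $\mathcal S^\downarrow$-valued paths (uniform mass bound, modulus of continuity away from jump points, control of the supremum). Given a compact $K \subset \domF$, the same ultra-metric identity $\sum_k a_k(h)^\ell = \nu^{\ell,\mfu}(\cdots)$, applied pointwise in $h$, translates the compactness features of $K$ into the tightness conditions characterising compactness of $\F^{-1}(K)$ in the Gromov-weak atomic topology. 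Together with continuity this yields perfectness by Remark \ref{r.perfect}, and closedness is a formal consequence.

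For part (ii), \textbf{injectivity} of $\F$ on $\UMI$ is the content of Definition \ref{def.mms}(5): if $\F(\mfu)=\F(\mfv)$ and $\mfu \in \UMI$, then by Remark \ref{rem.reconstruction} the cadlag path of family sizes encodes a path of partitions of representatives, and the identifiability condition forces the partition of $\supp(\mu_\mfv)$ induced by its $h$-balls to coincide (with matching masses) with that of $\supp(\mu_\mfu)$ for every $h>0$; reconstructing the metric via Remark \ref{rem.reconstruction} then gives a measure-preserving isometry. This yields the stronger statement $\F^{-1}(\{\F(\mfu)\})=\{\mfu\}$ for each $\mfu\in\UMI$, not merely injectivity on $\UMI$. \textbf{Continuity of the inverse} on $\UMI$ is now automatic from part (i): if $\F(\mfu_n)\to\F(\mfu)$ with $\mfu_n,\mfu\in\UMI$, properness applied to the compact set $\{\F(\mfu_n)\}_n\cup\{\F(\mfu)\}$ renders $\{\mfu_n\}$ relatively compact in $\UM$, and any subsequential limit $\mfv\in\UM$ satisfies $\F(\mfv)=\F(\mfu)$ by continuity, hence $\mfv=\mfu$ by the strengthened injectivity.

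The main obstacle I expect is the pointwise convergence $\f(\mfu_n,h)\to\f(\mfu,h)$ in $\dSeq$ at continuity points. Weak convergence of the $\nu^{\ell,\mfu_n}$ only gives moment-type information about the rearrangement, and promoting this to convergence of each individual coordinate (uniformly in the rearrangement) requires combining the atomic conditions (a), (b) with a delicate layer-cake or largest-mass-induction argument to rule out splitting of atoms in the limit, paralleling the strategy used in \cite{EKatomic} for the weak atomic topology on measures.
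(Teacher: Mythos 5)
Your overall architecture coincides with the paper's: continuity plus properness gives perfectness via Remark \ref{r.perfect}, and part (ii) follows from a reconstruction/injectivity argument on $\UMI$ combined with properness to get continuity of the inverse --- including the strengthened statement that $\F(\mfv)=\F(\mfu)$ with $\mfu\in\UMI$ forces $\mfv=\mfu$, which the paper records as the observation that $\mfu\in\UMI$ iff $\F(\mfu)\in\F(\UMI)$. The one genuinely different ingredient is your treatment of pointwise convergence at continuity points: you propose the moment identities $\sum_k a_k(h)^\ell=\nu^{\ell,\mfu}\bigl(\{(r_{ij}):r_{ij}\le h\ \forall i<j\}\bigr)$ for all $\ell$, i.e. you recover the rearrangement from the weak limit of the point measures $\sum_k a_k\delta_{a_k}$, whereas the paper uses only $\ell=2$ (Lemma \ref{l.connect.f.nu}) and instead runs a concatenation/decomposition argument (Lemma \ref{lem_uni} and Lemma \ref{l.conv.contpoint}) identifying the limit of $\f(\mfu_n,h)$ ball by ball. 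Your route is viable (the boundary sets $\{\max_{i<j}r_{ij}=h\}$ are $\nu^{\ell}$-null at continuity points, and convergence of all power sums together with the $\ell=1$ sum rules out mass escaping to zero) and arguably more elementary; the paper's route has the side benefit of yielding lower semicontinuity of the number-of-balls map (Remark \ref{r.num.lower}), which is reused in the proof of Theorem \ref{thm_polish}(c).

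Two places where your sketch is thinnest, both partly acknowledged by you but worth flagging. First, passing from convergence at continuity points to Skorohod convergence via Proposition 3.6.5 of \cite{EK86} requires a quantitative statement of the form: $\nu^{2,\mfu_n}((x_n,y_n])\to 0$ implies $\dSeq(\f(\mfu_n,x_n),\f(\mfu_n,y_n))\to 0$ (the paper's Lemma \ref{lem_fnu}); this is the nontrivial induction you allude to, and it is where most of the work sits --- convergence on a dense set of continuity points alone does not give Skorohod convergence. Second, the target space is $D((0,\infty),\mathcal S^\downarrow)$ with the boundary behaviour at $h=0$ measured in $\dMax$ (Appendix \ref{sec.Skorohod}); both for continuity and for properness you must control $\f(\mfu_n,t_n)$ as $t_n\downarrow 0$, and this is precisely where condition (b) of Definition \ref{def.GromWeakAtom} and the common-embedding Lemma \ref{l.Gwa.embedding} (with Lemma 2.5 of \cite{EKatomic}) enter. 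In particular, a mere uniform bound on $\nu^{2,\cdot}(\{0\})$, as in your proposed compactness characterization, would not suffice for properness: one needs actual convergence of the diagonal atoms along subsequences to preclude small families merging into single points in the limit, which the paper extracts from the $\dMax$-convergence of $\f(\mfu_n,0)$ forced by compactness of $K$ in this particular Skorohod metric.
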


Recall that a collection of cadlag process $\{X^n:\ n\in\mathbb N\}$ with values in some Polish space $E$ satisfies a compact containment condition if for all $\eps > 0$ and $T > 0$ there is a compact set $K \subset E$ such that 
\begin{equation}
\inf_{n \in \mathbb N} P(X^n(t) \in K\ \forall t \in [0,T]) \ge 1-\eps. 
\end{equation}

\begin{corollary}\label{c.perfect}
Let $\mathcal U_n$ be a sequence in $\UM$ and $\UM$ be equipped with the Gromov-weak atomic topology. Then 
\begin{itemize}
\item[(i)] $(\mathcal L(\U_n))_{n \in \mathbb N}$ is tight if and only if $(\mathcal L(\F(\U_n)))_{n \in \mathbb N}$ is tight.
\item[(ii)] $\U_n \Rightarrow \U$ for some $\UM$-valued random variable $\U$ implies $\F(\U_n)\Rightarrow \F(\U)$. 
\end{itemize}
Moreover, the map $D([0,\infty),\UM) \rightarrow  D([0,\infty),\F(\UM)), (\mfu_t)_{t \ge 0} \mapsto (\F(\mfu_t))_{t \ge 0}$ is continuous and a collection of $\UM$-valued cadlag processes $\{(\U_t^n)_{t \ge 0}:\ n\in \mathbb N\}$ satisfies a compact containment condition if and only if $\{(\F(\U_t^n))_{t \ge 0}:\ n\in \mathbb N\}$ satisfies a compact containment condition. 
\end{corollary}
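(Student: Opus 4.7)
The plan is to bootstrap everything from Theorem \ref{thm.perfect}: it provides both continuity of $\F: \UM \to \domF$ and, via perfectness, properness (so $\F^{-1}(K) \subset \UM$ is compact whenever $K \subset \domF$ is compact). Since $\F$ is continuous, it also maps compact sets to compact sets.

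For part (i), I would argue directly through compact sets. Given tightness of $(\mathcal L(\U_n))_n$ and $\eps > 0$, pick a compact $K \subset \UM$ with $P(\U_n \in K) \ge 1-\eps$ uniformly in $n$; then $\F(K) \subset \domF$ is compact and $\{\U_n \in K\} \subset \{\F(\U_n) \in \F(K)\}$ shows that $(\mathcal L(\F(\U_n)))_n$ is tight. Conversely, if $(\mathcal L(\F(\U_n)))_n$ is tight and $\tilde K \subset \domF$ is compact with $P(\F(\U_n) \in \tilde K) \ge 1-\eps$, then $\F^{-1}(\tilde K) \subset \UM$ is compact by properness and $\{\F(\U_n) \in \tilde K\} = \{\U_n \in \F^{-1}(\tilde K)\}$ gives tightness of $(\mathcal L(\U_n))_n$. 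Part (ii) is then the continuous mapping theorem applied to the continuous map $\F$.

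For the path-space assertion, I would verify that the induced map $\Psi: D([0,\infty),\UM) \to D([0,\infty),\domF)$, $(\mfu_t)_{t \ge 0} \mapsto (\F(\mfu_t))_{t \ge 0}$, is continuous. If $\mfu^n \to \mfu$ in the Skorohod topology, there exist strictly increasing time changes $\lambda_n: [0,\infty) \to [0,\infty)$ with $\lambda_n \to \mathrm{id}$ uniformly on compact intervals and $\mfu^n_{\lambda_n(t)} \to \mfu_t$ uniformly on each $[0,T]$. For every $T > 0$ the range $\{\mfu_t : t \in [0,T]\}$ is relatively compact in $\UM$, because any sequence $(\mfu_{t_k})$ with $t_k \in [0,T]$ admits, after splitting according to whether $t_k \le t^*$ or $t_k > t^*$ along a converging subsequence $t_k \to t^*$, a convergent subsequence with limit $\mfu_{t^*-}$ or $\mfu_{t^*}$. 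A standard argument then transfers the uniform convergence through $\F$: if $\F(\mfu^n_{\lambda_n(t_n)})$ and $\F(\mfu_{t_n})$ did not agree in the limit uniformly, one could extract $t_n \to t^*$ along which $\mfu_{t_n} \to y$ and $\mfu^n_{\lambda_n(t_n)} \to y$ (since $\mfu^n_{\lambda_n(\cdot)} \to \mfu$ uniformly), and continuity of $\F$ at $y$ would produce a contradiction. Hence $(\F(\mfu^n_{\lambda_n(t)}))_{t \ge 0}$ converges to $(\F(\mfu_t))_{t \ge 0}$ uniformly on $[0,T]$, certifying Skorohod convergence with the same time changes $\lambda_n$. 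The compact containment equivalence is the pathwise analogue of (i): a CC-compact $K \subset \UM$ produces the CC-compact $\F(K) \subset \domF$ via $\{\U^n_t \in K\ \forall t \in [0,T]\} \subset \{\F(\U^n_t) \in \F(K)\ \forall t \in [0,T]\}$, while properness pulls a CC-compact $\tilde K \subset \domF$ back to the CC-compact $\F^{-1}(\tilde K) \subset \UM$.

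The single slightly delicate step is the transfer of uniform convergence through $\F$; it is routine once one knows that a cadlag path in a metric space has relatively compact range on compact time intervals. Everything else is a direct consequence of continuity and properness of $\F$ guaranteed by Theorem \ref{thm.perfect}.
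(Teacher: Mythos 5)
Your proposal is correct and follows essentially the same route as the paper, whose proof is a one-line appeal to exactly the ingredients you use: continuity and properness of $\F$ from Theorem \ref{thm.perfect}, the continuous mapping theorem, Prohorov's theorem, the fact that continuous images of compact sets are compact, and the induced continuity on Skorohod path space (Problem 3.13 in \cite{EK86}). You have merely written out the details that the paper leaves implicit.
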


\begin{proof}
This is a direct consequence of Theorem \ref{thm.perfect}, the continuous mapping theorem, Prohorov's theorem and the fact that continuous images of compact sets are compact.  See Problem 3.13 in \cite{EK86}. 
\end{proof}

Another interesting observation is, that even though, the above result is related to the Gromov-weak atomic topology, we also get a 
result for the Gromov-weak topology: 

\begin{proposition}\label{p.tightness.GW}
Let $\U^n$, $n = 1,2,\ldots$ be a sequence of $\UM$-valued random variables and let $\UM$ be equipped with the Gromov-weak topology. 
Assume that for all $\delta > 0$ and all $\eps > 0$ 
\begin{itemize}
\item[(i)] there is a compact set $\Gamma \subset \mathcal S^\downarrow$ such that 
\begin{equation}
\limsup_{n \rightarrow \infty} P\left(\f(\U_n,\delta) \in \Gamma^c\right)  \le \eps,
\end{equation}
\item[(ii)] there is an $H \ge  0$ such that 
\begin{equation}
\limsup_{n \rightarrow \infty} P\left(\left(\sum_{i = 1}^\infty \f(\U^n,H)_i\right)^2-\sum_{i = 1}^\infty \f(\U^n,H)^2_i \ge \eps\right) \le \eps
\end{equation}
\end{itemize}
and that the total mass $(\nu^{1,\U^n})_{n \in \mathbb N}$ is tight. Then, $(\U^n)_{n \in \mathbb N}$ is tight. 
\end{proposition}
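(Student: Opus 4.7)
The plan is to reduce tightness of $(\U^n)$ in the Gromov-weak topology to tightness of the finite-dimensional distance matrix distributions $\nu^{k,\U^n}\in\M_f(\R_+^{\binom{k}{2}})$ for every $k\in\N$, and then verify the latter from the hypotheses. Since $\MM$ is Polish and $\UM$ is a closed subset (Proposition \ref{prop_M_polish}), and since Definition \ref{def_GS} characterises Gromov-weak convergence by weak convergence of all $\nu^{k,\cdot}$, a diagonal/Prohorov argument combined with Gromov's reconstruction theorem (applied to normalised distance matrix distributions, with the total-mass factor tight by hypothesis) accomplishes the reduction.

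The case $k=1$ is the total-mass hypothesis. For $k\ge 2$ the total mass $\nu^{k,\U^n}(\R_+^{\binom{k}{2}})=(\overline{\U^n})^{k}$ is tight, so by Prohorov it remains to verify no escape to infinity: for every $\eps>0$ one produces $H>0$ with
\begin{equation}
\limsup_{n\to\infty}P\Bigl(\nu^{k,\U^n}\bigl(\R_+^{\binom{k}{2}}\setminus[0,H]^{\binom{k}{2}}\bigr)>\eps\Bigr)<\eps.
\end{equation}
The ultra-metric structure from Lemma \ref{lem_representatives} (two points lie at distance $>H$ iff they sit in distinct closed $H$-balls) gives the identity
\begin{equation}
\nu^{2,\U^n}((H,\infty))=\Bigl(\sum_{i\ge 1}\f(\U^n,H)_i\Bigr)^{\!2}-\sum_{i\ge 1}\f(\U^n,H)_i^{2},
\end{equation}
which is exactly the quantity controlled by (ii). A union bound over the $\binom{k}{2}$ pairs of coordinates, combined with integrating the remaining $k-2$ coordinates against $\mu$ and thereby contributing a factor $\overline{\U^n}^{\,k-2}$, yields
\begin{equation}
\nu^{k,\U^n}\bigl(\{\exists\,i<j:\,r_{ij}>H\}\bigr)\le\binom{k}{2}\,\overline{\U^n}^{\,k-2}\,\nu^{2,\U^n}((H,\infty)),
\end{equation}
so combining tightness of $\overline{\U^n}$ with an application of (ii) -- for a threshold depending on the chosen bound for $\overline{\U^n}$ and on $k$ -- supplies the required estimate.

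The step I expect to be most delicate is the reduction itself: from tightness of each finite-dimensional marginal $\nu^{k,\U^n}$ one needs to recognise any subsequential weak limit of the (product-topology) infinite distance matrix distribution $\nu^{\N,\U^n}$ as the distance matrix distribution of a genuine random element of $\UM$, which is Gromov's reconstruction theorem applied on normalised mm-spaces together with total-mass tightness. Notably, the above chain of reasoning uses only (ii) and the total-mass hypothesis. Condition (i) -- tightness of the depth-$\delta$ family sizes in $(\mathcal S^\downarrow,\dSeq)$ -- provides the kind of small-scale control that is precisely needed to upgrade the conclusion to tightness in the finer Gromov-weak atomic topology of Definition \ref{def.GromWeakAtom} (where $(\nu^{2,\U^n})^{\ast}$ must also be controlled), but is not strictly necessary for the Gromov-weak statement as formulated.
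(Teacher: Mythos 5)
Your proposal contains a genuine gap at its very foundation: tightness of all the finite-dimensional distance matrix distributions $\nu^{k,\U^n}$, $k\in\N$, together with tightness of the total mass, does \emph{not} imply tightness of $(\U^n)$ in the Gromov-weak topology, and ``Gromov's reconstruction theorem'' cannot repair this. The standard counterexample is $\U^n=[\{1,\ldots,n\},r,\frac1n\sum_i\delta_i]$ with $r(i,j)=1$ for $i\neq j$: here every $\nu^{k,\U^n}$ converges weakly to the point mass at the all-ones matrix, so all marginals are tight, yet no separable mm-space realises the limiting infinite distance matrix distribution and $(\U^n)$ is not relatively compact. Note that this sequence satisfies your hypotheses minus (i): the total mass is $1$ and condition (ii) holds with $H=1$ since $\f(\U^n,1)=(1,0,\ldots)$; but (i) fails because $\sum_{i\ge M}\f(\U^n,\delta)_i=(n-M+1)/n\to 1$ for every fixed $M$ and $\delta<1$. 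So your closing claim that condition (i) ``is not strictly necessary for the Gromov-weak statement'' is exactly wrong --- (i) is the hypothesis that rules out this obstruction, and any proof that never uses it cannot be correct.

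The paper's route is the one you need. By Theorem 3 (with Theorem 2 and Remark 2.11) of \cite{GPW09}, tightness in the Gromov-weak topology is equivalent to tightness of the laws of $\nu^{2,\U^n}$ \emph{plus} uniform control of the modulus of mass distribution $\nu_\delta(\U^n)$ from \eqref{eq.modOfMass}. Your identity $\nu^{2,\U^n}((H,\infty))=\bigl(\sum_i\f(\U^n,H)_i\bigr)^2-\sum_i\f(\U^n,H)_i^2$ is correct and, combined with (ii) and total-mass tightness, gives the first half; there is no need to pass to $k\ge 3$. For the second half one observes that $\mu_n\bigl(\{x:\mu_n(\bar B(x,\delta))\le\eta\}\bigr)$ is bounded by $\sum_{i\ge M}\f(\U^n,\delta)_i$ once $\eta$ is small relative to the first $M-1$ entries, and the relative compactness of $\Gamma\subset(\mathcal S^\downarrow,\dSeq)$ in hypothesis (i) supplies exactly the uniform tail bound $\sup_{f\in\Gamma}\sum_{i\ge M}f_i\le\eps$ (Remark \ref{rem.compact.S}) that makes this small uniformly in $n$ with high probability.
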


\begin{remark}\label{rem.compact.S}
Note that $\mathcal S^\downarrow_C$ equipped with $\dMax$ is a compact space (this follows analogue to Proposition 2.1. in \cite{B}). 
It is not hard to see that $\Gamma\subset \mathcal S^\downarrow_C$, equipped with $\dSeq$, is compact, if for all $\eps > 0$ there is a 
$M \in \mathbb N$ such that   
\begin{equation}
\sup_{f \in \Gamma} \sum_{i \ge M} f_i \le \eps.
\end{equation}
We will discuss this property in more detail in Section \ref{sec.relative.compactness.F}. 
\end{remark}

Even though the function $\F$ is not injective on the whole space it is at least injective on a dense subset (see Theorem \ref{thm_polish}).

\begin{remark}\label{r.hom.ext.1}
By Lavrentiev's Theorem (see Section 35.II in \cite{kuratowski2014topology}) there are two $G_\delta$ sets $\UMI \subset \UMI^\ast \subset \UM$
and $\F(\UMI) \subset \UMI^\ast_{\F} \subset \F(\UM)$ and a homeomorphism $\F^\ast:\UMI^\ast \to \UMI^\ast_{\F} $ extending $\F$. 
In addition $\UMI^\ast $ is dense in $\UM$ since $\UMI$ is dense in $\UM$.
\end{remark}

Next, we give a criterion when a $\UM$-valued random variable takes values in $\UMI$:
\begin{proposition}\label{p.UMI.as}
Assume that $\mathcal U$ is an $\UM$-valued random variable. Let $N_h \in \mathbb N \cup \{\infty\}$ be the number of non-zero entries of $\f(\U,h)$. 
If $\mathcal L(\f(\U,h)) \ll \lambda^{\otimes N_h}$ conditioned on $N_h$ for all $h > 0$, where $\lambda$ denotes the Lebesgue measure, then $\mathcal U \in \UMI$ almost surely.  
\end{proposition}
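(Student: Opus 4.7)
The plan is to reformulate identifiability in combinatorial terms, reduce the uncountable family of conditions to a countable one, and then apply the absolute continuity hypothesis fiberwise over $N_h$.

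First I would reformulate the defining condition. Any measurable $A_h \subset \supp(\mu)$ whose distinct points have pairwise $r$-distance $> h$ contains at most one point per closed $h$-ball (two points in the same $h$-ball are at distance $\le h$), so such an $A_h$ is in bijection with a subset of the at most countable family of distinct $h$-balls $\{\bar B_h(\rep_i^h)\}_{i=1}^{N_h}$ of Lemma \ref{lem_representatives}. Since these balls are pairwise $\mu$-disjoint, $\mu((A_h)^h) = \sum_{x \in A_h}\mu(\bar B_h(x))$. Hence $\mfu \in \UMI$ iff, for every $h > 0$, the map $S \mapsto \sum_{i \in S}\mu(\bar B_h(\rep_i^h))$ from subsets of $\{1,\ldots,N_h\}$ to $[0,\mu(X)]$ is injective.

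Second I would reduce to countably many scales. By Lemma \ref{lem_representatives}, for $0 < q \le h$ every $h$-ball is a disjoint union of $q$-balls, so any two distinct subsets of $h$-balls with equal total mass lift to two distinct subsets of $q$-balls with equal total mass. Equivalently, a failure of identifiability at scale $h$ propagates to every finer scale $q \le h$. Since every $h > 0$ dominates some rational, it suffices to prove that, almost surely, no bad coincidence occurs at any $q \in \mathbb Q_{>0}$; by a countable union this reduces to a fixed rational scale.

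Third I would fix $q \in \mathbb Q_{>0}$ and use the absolute continuity hypothesis, conditioning on $N_q$. On $\{N_q = n\}$ with $n < \infty$, the vector $(\f(\U,q)_1,\ldots,\f(\U,q)_n)$ has a Lebesgue density on $\mathbb R^n$, and the set of $n$-tuples admitting two distinct subsets of $\{1,\ldots,n\}$ with equal sums is a finite union of at most $\binom{2^n}{2}$ hyperplanes, hence Lebesgue null, so the conditional bad event has probability zero. On $\{N_q = \infty\}$ the same finite-dimensional argument, applied to each finite marginal together with a countable union, rules out all bad pairs $(S_1,S_2)$ whose symmetric difference is finite. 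The main obstacle is the remaining case on $\{N_q = \infty\}$, namely bad pairs with $|S_1 \triangle S_2| = \infty$, which form an uncountable family. I would handle these by a conditioning argument: picking $k \in S_1 \triangle S_2$ and conditioning on $(\f(\U,q)_i)_{i \neq k}$, any such coincidence forces $\f(\U,q)_k$ to equal some $\{-1,0,+1\}$-linear combination of the remaining coordinates; one then has to show that the set of such combinations has almost surely Lebesgue measure zero, so that the conditional absolute continuity of $\f(\U,q)_k$ closes the argument. This step, which must exploit both the summability $\sum_i \f(\U,q)_i < \infty$ and the infinite-dimensional version of the absolute continuity hypothesis, is the genuinely technical part of the proof. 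Combining all of the above with a countable union over $q \in \mathbb Q_{>0}$ and the hierarchical reduction then yields $\mathcal U \in \UMI$ almost surely.
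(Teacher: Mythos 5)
Your strategy is essentially the paper's: translate identifiability into the statement that no two distinct sub-collections of $h$-balls have equal total mass (this is Lemma \ref{l.Ielements} applied to $\Cut_h(\U)$), use the refinement property of Lemma \ref{lem_representatives} to see that identifiability at scale $q$ implies identifiability at every scale $h \ge q$ (in the paper this appears as ``$\Cut_q(\mfu)\in\UMI$ implies $\Cut_h(\mfu)\in\UMI$ for $h>q$''), reduce to countably many scales, and kill the bad event at a fixed scale by the conditional absolute continuity given $N_q$. On the event $\{N_q<\infty\}$ your argument is complete and in fact tidier than the paper's: the bad set is a finite union of hyperplanes in $\mathbb R^n$, hence $\lambda^{\otimes n}$-null, hence null for any absolutely continuous law; no independence of the coordinates is needed (the paper invokes conditional independence of the $a_i^h$, which is not part of the hypothesis).

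The gap you flag on $\{N_q=\infty\}$ for pairs $(S_1,S_2)$ with infinite symmetric difference is genuine, and the repair you sketch cannot work as stated. Conditioning on $(\f(\U,q)_i)_{i\ne k}$ forces $\f(\U,q)_k$ to avoid the set of $\{-1,0,1\}$-valued linear combinations of the remaining coordinates, but this set need not be Lebesgue-null for a summable sequence: for $a_i=2^{-i}$ the subset sums already fill an interval, so conditional absolute continuity of a single coordinate does not close the argument and some further input is required. You should know, however, that the paper's own proof does not resolve this case either: it writes the bad event as $\{\exists I:\ a_{\min(I)}=x-S_I\}$ and declares it null using the asserted independence, without addressing the uncountable union over subsets $I$ when $N_h=\infty$. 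In the intended application (Theorem \ref{thm.FV.neutral}) the genealogy is compact and $N_h<\infty$ almost surely, so only the finite case is actually used; but as a proof of the proposition as stated, your argument and the paper's leave the same case open, and you deserve credit for identifying it explicitly rather than papering over it.
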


We close this section with a result that is even stronger than the above result, when one considers the subspace 
$\UMB\cap \UMI$ (see Definition \ref{def.mms}):

\begin{theorem}\label{thm.subspace.homeomorphism}(Properties of $\UMB\cap\UMI$)
Let $\UMB\cap\UMI$ be equipped with the Gromov-weak atomic topology, 
then we have $\NU:\UMB\cap\UMI \to \NU(\UMB\cap\UMI) \subset \mathcal M_f(\mathbb R_+)$, $\mfu \mapsto \nu^{2,\mfu}$ is a homeomorphism. 
\end{theorem}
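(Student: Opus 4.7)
The plan is to factor $\NU|_{\UMB\cap\UMI}$ as $\Psi \circ \F|_{\UMB\cap\UMI}$, exploiting Theorem \ref{thm.perfect}(ii): since $\F|_{\UMI}$ is a homeomorphism and $\UMB \cap \UMI \subset \UMI$, it suffices to define a homeomorphism $\Psi: \F(\UMB\cap\UMI) \to \NU(\UMB\cap\UMI)$ with $\Psi \circ \F = \NU$ on $\UMB \cap \UMI$. The starting point is the key identity
\begin{equation}
\nu^{2,\mfu}([0,h]) = \sum_{i \ge 1} \f(\mfu,h)_i^2, \qquad h > 0,
\end{equation}
which follows from the almost disjoint ball decomposition of Lemma \ref{lem_representatives}. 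This shows that $\nu^{2,\mfu}$ is determined by $\F(\mfu)$, so $\Psi$ is well-defined, and continuity of $\Psi$ follows since $f \mapsto \sum_i f_i^2$ is $2\overline{\mfu}$-Lipschitz on $(\mathcal S^\downarrow, \dSeq)$, lifting Skorohod convergence of $f_n$ to weak convergence of the corresponding Stieltjes measures. Continuity of $\NU$ is an immediate consequence, since the Gromov-weak atomic topology already refines the Gromov-weak topology.

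For injectivity of $\Psi$, I would reconstruct $\F(\mfu)$ from $\nu^{2,\mfu}$ by induction in decreasing $h$. Under binariness, each jump of $h \mapsto \sum_i \f(\mfu,h)_i^2$ at a height $h > 0$ corresponds to a single merge event: two entries $a, b$ of $\f(\mfu,h-)$ combine into a single entry $a+b$ of $\f(\mfu,h)$, contributing $2ab = \nu^{2,\mfu}(\{h\})$ to the jump. Processing atoms of $\nu^{2,\mfu}$ on $(0,\infty)$ in decreasing order starting from $(\overline{\mfu},0,\ldots)$, identifiability pins down at each step the unique splitting entry $a$ (the unique entry with $a^2 > 2\nu^{2,\mfu}(\{h\})$ whose split preserves identifiability), and forces the split values $c \neq d$ given by $c+d=a$ and $cd = \nu^{2,\mfu}(\{h\})/2$. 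Thus $\F(\mfu)$ is fully reconstructible from $\nu^{2,\mfu}$.

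For continuity of $\Psi^{-1}$, let $\nu^{2,\mfu_n} \Rightarrow \nu^{2,\mfu}$ weakly with all elements in $\NU(\UMB\cap\UMI)$. Weak convergence yields $\sum_i \f(\mfu_n, h)_i^2 \to \sum_i \f(\mfu, h)_i^2$ at continuity points $h$ of the limit. The strict inequalities entering the reconstruction (i.e.\ $a^2 > 2ab$ whenever $c \neq d$, together with distinctness of all family sizes) make this reconstruction locally continuous, yielding $\F(\mfu_n) \to \F(\mfu)$ in the Skorohod topology on $D((0,\infty),\mathcal S^\downarrow)$; Theorem \ref{thm.perfect}(ii) then gives $\mfu_n \to \mfu$ in the Gromov-weak atomic topology.

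The main obstacle is the continuity of $\Psi^{-1}$. The reconstruction is essentially discrete (atom-by-atom) while weak convergence does not directly control atoms of $\nu^{2,\mfu}$; the argument must exploit the strict inequalities forced by identifiability to show that atoms of $\nu^{2,\mfu_n}$ track those of $\nu^{2,\mfu}$ stably, both in height and in weight. The non-atomic limiting regime, where $\nu^{2,\mfu}$ is continuous on $(0,\infty)$, will require a separate continuous-reconstruction argument for the non-decreasing cadlag function $h \mapsto \sum_i \f(\mfu,h)_i^2$, ruling out small atoms in $\nu^{2,\mfu_n}$ escaping to the limit.
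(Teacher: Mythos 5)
Your overall strategy --- factoring $\NU$ through $\F$ via the identity $\nu^{2,\mfu}([0,h]) = \sum_i \f(\mfu,h)_i^2$ (Lemma \ref{l.connect.f.nu}) and reconstructing $\f(\mfu,\cdot)$ from $\nu^{2,\mfu}$ by downward induction over the jump levels, with binariness turning each jump into a single binary merge --- is the same as the paper's. The gap is in the step where you claim identifiability ``pins down the unique splitting entry $a$ (the unique entry with $a^2 > 2\nu^{2,\mfu}(\{h\})$ whose split preserves identifiability)''. The atom at level $h$ only determines the \emph{product} of the two children, $cd = \nu^{2,\mfu}(\{h\})/2$; the inequality $a^2 > 4cd$ merely says that $a$ admits a factorization into two distinct positive parts, and generically every sufficiently large entry of the current vector satisfies it, so it does not select the parent. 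Nor is it shown that ``preserving identifiability'' singles out one choice: identifiability is a global condition on subset sums, and two different choices of parent at level $h$ could a priori both continue to identifiable configurations. This is precisely the case the paper must treat separately (its ``$i^\ast \neq j^\ast$'' case), and it resolves it not with level-$h$ information alone but by comparing atoms of $\nu^{2,\mfu}$ at several consecutive levels: it isolates a three-point subconfiguration and solves the system $b_0 = x^2+y^2+z^2$, $b_1=(x+y)^2+z^2$, $b_2=(x+y+z)^2$, whose unique admissible solution forces the two candidate parents to coincide. Without some such multi-level argument your induction does not close, so the injectivity of $\Psi$ is not established.

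Two smaller points. First, the ``non-atomic limiting regime'' you flag for $\Psi^{-1}$ does not occur: $\nu^{2,\mfu}$ is purely atomic for every $\mfu\in\UM$ (Remark \ref{rem.nu.atomic}), and the topology on the image induced by the Gromov-weak atomic topology is the weak atomic topology on $\mathcal M_f(\mathbb R_+)$ (Definition \ref{def.GromWeakAtom}(a)), under which atoms do track atoms in location and weight (Lemma 2.5 of \cite{EKatomic}, Proposition \ref{p.weak.atomic}); so the delicate part of your continuity discussion is aimed at a situation that cannot arise. Second, rather than trying to make the atom-by-atom reconstruction ``locally continuous'', the paper obtains continuity of the inverse by reusing the compactness mechanism of Theorem \ref{thm.perfect}(ii): once injectivity is known, relative compactness of preimages together with uniqueness of subsequential limits gives the claim, which is a cleaner route than a quantitative stability analysis of the reconstruction.
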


\begin{remark}\label{r.hom.ext.2}
As in Remark \ref{r.hom.ext.1} we can extend the homeomorphism to $G_\delta$ subsets. 
\end{remark}

\section{Application to the tree-valued Fleming-Viot process}\label{sec.app.FV}

In this section we give a short introduction to tree-valued Fleming-Viot processes and show that these processes live in the 
subspace $\UMB\cap\UMI$ (see Theorem \ref{thm.subspace.homeomorphism}). For simplicity, we will only introduce the  
neutral model and refer to \cite{DGP12} for the general case.  \par 
In section \ref{sec.neutralMM} we define the neutral tree-valued Moran model of a given size $N$ (the population size).  This model was defined by  \cite{GPW13} and extended by \cite{DGP12}  to include selection and mutation. 
In section \ref{sec.FV.result} we consider the large population limit (i.e. $N \rightarrow \infty$) of the tree-valued Moran models, the so called tree-valued Fleming-Viot process,  and give our main result for this process.

\subsection{Definition of the neutral model}\label{sec.neutralMM}

We want to describe the genealogy of a population, consisting of  $N \in \mathbb N$ individuals, that evolves according to the following dynamic:

\begin{itemize}
\item[] {\it Resampling:} Every pair $i \neq j$ of individuals is replaced with rate one. 
If such an event occurs, $i$ is replaced by an offspring of $j$ with probability $\frac{1}{2}$, or $j$ is replaced by an
offspring of $i$ with probability $\frac{1}{2}$. 
\end{itemize}

In order to describe the evolution of this process formally, let $I_N:=[N]:= \{1,\ldots,N\},\ N \in \mathbb N$  and 
\begin{equation}\label{ppp}
\left\{\eta^{i,j}:\ i,j \in I_N,\ i \not=j\right\}
\end{equation}
be a realization of a family of independent rate $1$ Poisson point processes. 
\begin{itemize}
\item[] For $i,i' \in I_N$, $0\le h< t < \infty$ we say that there is a {\it path} from $(i,h)$ to $(i',t)$ if there is an  $n \in \mathbb N$, 
$h \le t_1 < t_2 < \cdots < t_n \le t$ and
$j_1,\ldots,j_n \in I_N$ such that for all $k \in \{1,\ldots,n+1\}$ ($j_0:= i, j_{n+1}:=i'$)
$\eta^{j_{k-1},j_k}\{t_k\} = 1$, $\eta^{x,j_{k-1}}((t_{k-1},t_k))= 0$ for all $x \in I_N$.
\end{itemize}

\begin{figure}[ht]
\begin{center}
\begin{tikzpicture}[scale = 0.68]
\draw [->] (2,-4) -- (2,5);
\draw [line width=1.pt] (4,-3)-- (4,4);
\draw (6,-3)-- (6,4);
\draw (8,-3)-- (8,4);
\draw (10,-3)-- (10,4);
\draw [->,line width=1.pt] (4,-2) -- (6,-2);
\draw [->,line width=1.pt] (10,-1) -- (6,-1);
\draw [->,line width=1.pt] (6,2) -- (4,2);
\draw [->,line width=1.pt] (6,3.5) -- (8,3.5);
\draw [line width=1.pt] (4,3)-- (4,4);
\draw [line width=1.pt] (6,-3)-- (6,-2);
\draw [line width=1.pt] (6,-2)-- (6,-1);
\draw [line width=1.pt] (6,-1)-- (6,1.5);
\draw [line width=1.pt] (6,1.5)-- (6,2);
\draw (6,1.5)-- (6,2);
\draw [line width=1.pt] (6,1.5)-- (6,4);
\draw [line width=1.pt] (10,-3)-- (10,4);
\draw [line width=1.pt] (8,-3)-- (8,-0.5);
\draw [line width=1.pt] (8,-0.5)-- (8,3.5);
\draw [line width=1.pt] (8,3.5)-- (8,4);
\draw (4,-3) node[anchor=north] {$1$};
\draw (6,-3) node[anchor=north] {$2$};
\draw (8,-3) node[anchor=north] {$3$};
\draw (10,-3) node[anchor=north] {$4$};
\draw (2,4) node[anchor=east] {$t$};
\draw (2,-1.5) node[anchor=east] {$h$};
\draw (2,5) node[anchor=east] {$\text{time}$};
\draw [line width=1.pt] (15,4)-- (15,3.5);
\draw [line width=1.pt] (15,3.5)-- (17,3.5);
\draw [line width=1.pt] (17,3.5)-- (17,4);
\draw [line width=1.pt] (13,4)-- (13,2);
\draw [line width=1.pt] (16,3.5)-- (16,2);
\draw [line width=1.pt] (13,2)-- (16,2);
\draw [line width=1.pt] (19,4)-- (19,-1);
\draw [line width=1.pt] (19,-1)-- (14.5,-1);
\draw [line width=1.pt] (14.5,-1)-- (14.5,2);
\begin{scriptsize}
\draw (2,4)-- ++(-2.5pt,-2.5pt) -- ++(5pt,5pt) ++(-5pt,0) -- ++(5pt,-5pt);
\draw (2,-1.5)-- ++(-2.5pt,-2.5pt) -- ++(5pt,5pt) ++(-5pt,0) -- ++(5pt,-5pt);
\end{scriptsize}
\end{tikzpicture}
\caption{\label{fig.TVMM} \footnotesize On the left side we see the graphical construction of the Moran model; $\rightarrow$ indicates a resampling event. On the right side we see the genealogical tree of the population at time $t$. 
In this case the ancestor of all individuals at time $h$ would be individual $4$, i.e.  $A_h(i,t) = 4$ for all $i = 1,\ldots,4$}
\end{center}
\end{figure}
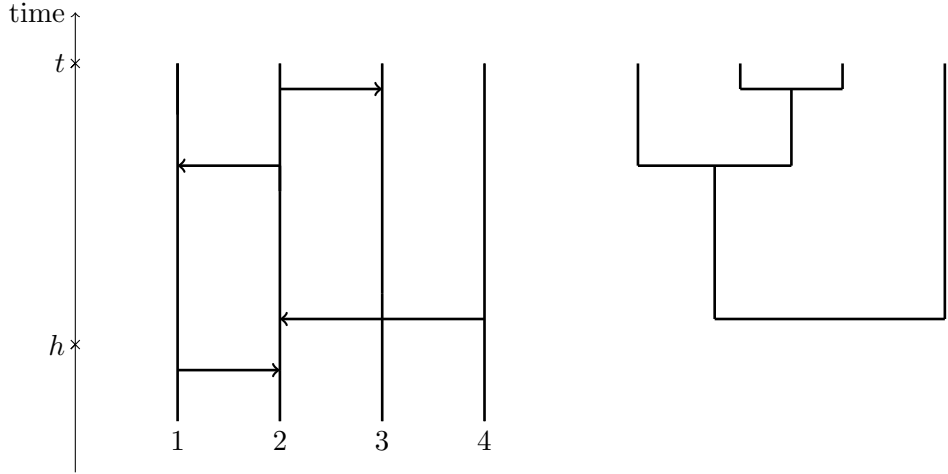

\noindent Note that for all $i \in I_N$ and $0 \le h \le t$ there exists an unique element
\begin{equation}\label{eq.ancestors}
A_h(i,t) \in I_N
\end{equation}
with the property that there is a path from $(A_h(i,t),h)$ to $(i,t)$. We call $A_h(i,t)$ the {\it ancestor of  $(i,t)$ at time $h$} (see Figure \ref{fig.TVMM}). \par

Let $r_0$ be an ultra-metric on $I_N$ and  $i,j \in I_N$. Then we define the following (pseudo) ultra-metric on $I_N$:
{\small
\begin{equation}\label{eq.ultrametric}
\begin{split}
r_t&(i,j):=\left\{ \begin{array}{ll}
t-\sup\{h \in [0,t]:A_h(i,t)=A_h(j,t)\},&\textrm{ if } A_0(i,t) = A_0(j,t),\\[0.2cm]
t + r_0(A_0(i,t), A_0(j,t)) ,& \textrm{ if } A_0(i,t) \not= A_0(j,t).
\end{array}\right.
\end{split}
\end{equation}
}

Since $r_t$, is only a pseudo-metric, we  consider the following equivalence relation $\approx_t$  on $I_N$: 
 $x \approx_t y \Leftrightarrow r_t(x,y) = 0$.
We denote by $\tilde I_N^t:= I_N\! /\!\!\approx_t$ the set of equivalence classes and note that we can find a set of representatives $\bar I_N^t$
such that $\bar I_N^t \rightarrow \tilde I_N^t,\  x \to [x]_{\approx_t}$ is a bijection. \par 
Let  $\mu^N\in \mathcal M_1(I_N)$  be the uniform distribution on $I_N$, i.e. 
\begin{equation}
\mu^N = \frac{1}{N} \sum_{k \in I_N} \delta_{k}
\end{equation}
and define
\begin{align}
\bar r_t(\bar  i,\bar  j) =  r_t(\bar i,\bar j),\quad \bar \mu^N_t(\{\bar i\}\times \cdot) &=\mu^N_t( [\bar i]_{\approx_t} \times \cdot),\qquad  \bar i,\bar j \in \bar I_N^t.
\end{align} 
Then the {\it tree-valued Moran model} of size $N$ is defined as
\begin{equation}
\mathcal U_t^{N} := [\bar I_N^t,\bar r_t,\bar \mu^N_t], 
\end{equation}

\subsection{Results for the tree-valued Fleming-Viot process}\label{sec.FV.result}

Assume that $\L(\U_0^{N}) \Rightarrow \mu \in \mathcal M_1(\UM)$, where $\UM$ is equipped with the Gromov-weak topology. Then 

\begin{equation}
(\U^{N}_t)_{t \ge 0} \stackrel{N \rightarrow \infty}{\Rightarrow} (\U_t)_{t \ge 0}
\end{equation}
weakly in the Skorohod topology on $D([0,\infty),\UM)$, where $\L(\U_0) = \mu$ and $(\U_t)_{t \ge 0}$ is the solution of a well-posed martingale problem 
(see Theorem 2 in \cite{GPW13}). We call the process $\U = (\U_t)_{t \ge 0}$  {\it tree-valued Fleming-Viot process}. \\

\begin{proposition}\label{p.convMM} (Convergence of the tree-valued Moran models)
Let $\UM$ be equipped with the Gromov-weak atomic topology and let $(V_i)_{i \in \mathbb N}$ be a sequence of independent $[0,1]$-uniformly distributed random variables.
We assume that $\U^N_0 = [[0,1],r_0,\mu^N]$, where $([0,1],r_0)$ is a compact binary ultra-metric space, and $\mu^N = \frac{1}{N} \sum_{i = 1}^N \delta_{V_i}$. 
Then 
\begin{equation}
 \U^{N}_t \Rightarrow \U_t \qquad \text{ for all } t \ge 0. 
\end{equation}
\end{proposition}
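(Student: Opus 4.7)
The plan is to leverage Theorem 2 of \cite{GPW13} for convergence in the Gromov-weak topology and then upgrade this to the Gromov-weak atomic topology by verifying the two additional conditions (a) $(\nu^{2,\U^N_t})^\ast \Rightarrow (\nu^{2,\U_t})^\ast$ and (b) $\nu^{2,\U^N_t}(\{0\}) \to \nu^{2,\U_t}(\{0\})$ of Definition \ref{def.GromWeakAtom}. Since $\mu^N \Rightarrow \lambda$ by Glivenko-Cantelli, we have $\U^N_0 \Rightarrow [[0,1],r_0,\lambda]$ in the Gromov-weak topology, so by \cite{GPW13} the Fleming-Viot limit can be represented as $\U_t = [[0,1], r_t, \lambda]$ with $r_t$ a random ultrametric arising from the Kingman coalescent.

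For (b), the definition of $r_t$ in \eqref{eq.ultrametric} forces $r_t(V_i, V_j) = 0 \Leftrightarrow A_t(i,t) = A_t(j,t) \Leftrightarrow i = j$, so $\nu^{2,\U^N_t}(\{0\}) = (\mu^N \otimes \mu^N)(\text{diagonal}) = 1/N \to 0$ on the almost-sure event that the $V_i$ are distinct. The same reasoning shows $r_t(x,y) > 0$ for $x \neq y$ in the limit, and since $\lambda$ is non-atomic, $\nu^{2,\U_t}(\{0\}) = 0$, establishing (b).

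For (a), I would decompose the atoms of $\nu^{2,\U^N_t}$ into three groups: the atom at $\{0\}$ of mass $1/N$ (contributing $1/N^2$ to $(\nu^{2,\U^N_t})^\ast$ and vanishing); atoms at internal Moran coalescent merge times $\tau_v^N \in (0,t)$ with masses $2 k_{v,1}^N k_{v,2}^N / N^2$; and atoms at $t + r_0(V_{i_k}, V_{i_l})$ arising from pairs whose time-$0$ ancestors differ, with masses $2 k_{i_k}^N k_{i_l}^N / N^2$. In the Fleming-Viot limit the first type is absent, while the latter two persist with block fractions $p_{v,i}, p_k$ and ancestor locations $x_k \in [0,1]$ governed by the Kingman coalescent; moreover the binary hypothesis on $r_0$ combined with the continuity of the $V_i$ ensures that all these atoms lie at distinct locations. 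The standard coupling of the Moran coalescent to the Kingman coalescent yields joint convergence of merge times, block sizes, and ancestor labels, hence pointwise convergence of the matched atoms and their squared masses.

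The hard part will be upgrading this finite-dimensional convergence to weak convergence of the full atomic measure $(\nu^{2,\U^N_t})^\ast$, in particular handling the countably many small atoms accumulating near $\tau = 0$ from the ``coming-down-from-infinity'' regime of the Kingman coalescent. I would control this by bounding the mass of atoms of $\nu^{2,\U^N_t}$ located in $[0,\delta)$ uniformly in $N$ by the probability that two independently sampled individuals coalesce within backward time $\delta$, a quantity that tends to $0$ as $\delta\to 0$ by standard Kingman coalescent estimates. Combining this with the already-established Gromov-weak convergence to identify the finitely many non-negligible atoms and the tightness of $(\nu^{2,\U^N_t})^\ast$ (whose total mass is bounded by $(\mu^N([0,1]))^4$) then gives (a).
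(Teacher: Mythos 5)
Your proposal is correct in outline but takes a genuinely different route from the paper. The paper does not verify conditions (a) and (b) of Definition \ref{def.GromWeakAtom} directly: it first proves (Lemma \ref{lem.convMM}) that the family-size paths $\F(\U^N_t)$ converge weakly in $D((0,\infty),\mathcal S^\downarrow)$, using exactly the consistent Kingman-coalescent coupling and the asymptotic block frequencies you invoke, and then applies the perfectness of $\F$ (Theorem \ref{thm.perfect}, via Corollary \ref{c.perfect}) to pull tightness back to $\UM$ in the Gromov-weak atomic topology; the limit is then identified through the already-known Gromov-weak convergence, since the atomic topology is finer. What the paper's route buys is that the delicate atom-matching for $(\nu^{2,\U^N_t})^\ast$ --- showing that no two atoms merge and that the countably many small atoms accumulating at $h=0$ are negligible --- is already packaged inside Theorem \ref{thm.perfect} (through Lemma \ref{l.connect.f.nu} and Lemma \ref{lem_fnu}), so it need not be redone by hand; your route redoes precisely this analysis at the level of $\nu^{2}$, which is more self-contained but duplicates work the paper has already invested. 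Your estimates are sound: the $[0,\delta)$ control follows from $(\nu^{2,\U^N_t})^\ast([0,\delta)) \le \nu^{2,\U^N_t}([0,\delta))^2 \le \f(\U^N_t,\delta-)_1^2$ and coming down from infinity, which is the same bound the paper uses at the end of its proof of Lemma \ref{lem.convMM}. One point you should make explicit: establishing (a) and (b) \emph{in distribution} separately, on top of marginal Gromov-weak convergence from \cite{GPW13}, does not by itself give weak convergence in the finer topology --- you need \emph{joint} convergence of the Gromov-weak part and the atomic functionals. This is exactly what the single coupling (Kingman $N$-coalescents realized as restrictions of one Kingman coalescent, together with the shared $V_i$) provides, by upgrading everything to almost sure convergence; since you already invoke that coupling for the atom matching, the fix is to state that the whole argument runs under it.
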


\begin{remark}
Even though we choose a special initial condition, the proof for general initial conditions should be 
similar but more technical (one needs to use Lemma 5.8 in \cite{GPW09} for example). We also note that the initial condition  does not really matter when one wants to study genealogical properties that 
are generated by an evolving population. 
\end{remark}

We are now ready for our main result: 
\begin{theorem}\label{thm.FV.neutral}(State space of tree-valued FV-processes)
Recall Remark \ref{rem.Borel} and assume that $P(\U_0 \in \UMB\cap\UMc) = 1$, then 
\begin{equation}
P(\U_t  \in \UMB \cap \UMI\cap\UMc) = 1, \qquad \forall t > 0. 
\end{equation}
\end{theorem}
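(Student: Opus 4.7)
The plan is to split the claim into the three properties binary, compact, and identifiable, and to tackle them in turn, with the main effort on the last.

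For the binary and compact properties I would use the approximation by Moran models. Each $\mathcal U^N_t$ lies in $\UMB \cap \UMc$ almost surely: compactness is trivial from the finite support, and the binary property holds because all branching heights in the tree of $\mathcal U^N_t$ come either from $\mathcal U^N_0 \in \UMB$ or from resampling times drawn from independent rate-one Poisson point processes and are therefore almost surely pairwise distinct. Proposition \ref{p.convMM}, extended to general $\mathcal U_0 \in \UMB \cap \UMc$ as indicated in the remark following it, then gives $\mathcal U^N_t \Rightarrow \mathcal U_t$ in the Gromov-weak atomic topology. One separately checks $\mathcal U_t \in \UMc$ a.s., using that for every $h > 0$ there are a.s.\ only finitely many $h$-balls (finitely many Kingman blocks at coalescent time $h \wedge t$ and finitely many $\mathcal U_0$-balls of radius $(h - t)_+$ by the initial compactness assumption). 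Passing to the subspace topology on $\UMc$ (measurable in $\UM$ by Remark \ref{rem.Borel}) and using Theorem \ref{thm_polish}(c), by which $\UMB$ is closed in $\UMc$, the Portmanteau theorem yields $P(\mathcal U_t \in \UMB \cap \UMc) = 1$.

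For identifiability I would invoke Proposition \ref{p.UMI.as}: it suffices to show that for each $h > 0$ and each $k \in \mathbb N$ the conditional law $\mathcal L(\f(\mathcal U_t, h) \mid N_h = k)$ has a density with respect to $\lambda^{\otimes k}$. Using the Kingman-coalescent / lookdown representation of the tree-valued Fleming-Viot process, tracing back coalescent time $s \in (0, t]$ from the population at time $t$ yields a.s.\ finitely many ancestral blocks; their masses, conditional on their number being $k$, are distributed as the spacings of $k - 1$ uniform order statistics on $(0, M)$ with $M = \overline{\mathcal U_t}$ the deterministic total mass, by Kingman's paintbox construction. This is absolutely continuous on the $(k-1)$-simplex. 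For $h \le t$ the depth-$h$ families are precisely these blocks, and the claim follows after size-ordering. For $h > t$ two individuals at time $t$ lie in the same depth-$h$ family iff their time-$0$ ancestors lie at $\mathcal U_0$-distance $\le h - t$; conditional on the coalescent topology and the ancestral positions, $\f(\mathcal U_t, h)$ is a deterministic coordinate-sum projection of the block-mass vector, and such projections preserve absolute continuity on the image simplex.

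The main obstacle will be the $h > t$ case, where one must check both that the coordinate-sum projection preserves absolute continuity and that size-ordering acts as an almost-everywhere bijection; the latter requires that all partial sums of block frequencies be a.s.\ distinct, which again follows from absolute continuity of the paintbox together with the binary structure of $\mathcal U_0$. An alternative route I would keep in reserve is to apply the Markov property at a small time $s \in (0, t)$: first establish $\mathcal U_s \in \UMI$ a.s.\ using the simpler $h \le s$ argument, then propagate identifiability forward using a Markov coupling.
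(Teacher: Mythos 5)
Your proposal follows essentially the same route as the paper: the binary and compact properties are obtained exactly as you describe (Moran approximation, $\U_t\in\UMc$ a.s., closedness of $\UMB$ in $\UMc$ from Theorem \ref{thm_polish}(c), and Portmanteau), and identifiability is reduced to Proposition \ref{p.UMI.as} via the Kingman paintbox, whose block frequencies conditioned on the block number are uniform on the simplex. Your extra care with the case $h>t$ (where the depth-$h$ family sizes are group-sums of paintbox frequencies, with the grouping independent of the frequencies by exchangeability) is a welcome elaboration of a step the paper's proof states in one line.
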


\begin{remark}
Even though, it is not hard to see that $\UMB \cap \UMI$ is measurable we can also apply 
Remark \ref{r.hom.ext.2} and replace $\UMB \cap \UMI$ in the above theorem by a suitable $G_\delta$-set.
\end{remark}

Since we did not define the model with selection we need to refer all interested readers to \cite{DGP12}. 
But, as a direct consequence of the Girsanov transform - Theorem 2 in this paper, one can prove the following.

\begin{corollary}\label{cor.FV.selection}
If we denote by $\U^\alpha$ the tree-valued Fleming-Viot process with mutation and selection parameter $\alpha \ge 0$, defined in \cite{DGP12}, 
with $P(\U_0^\alpha \in \UMB\cap\UMc) = 1$, then 
\begin{equation}
P(\U_t^\alpha  \in \UMB \cap \UMI\cap \UMc) = 1, \qquad \forall t > 0. 
\end{equation}
\end{corollary}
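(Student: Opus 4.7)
The plan is to deduce the corollary from Theorem \ref{thm.FV.neutral} by a Girsanov-type change of measure argument, with the neutral $\UM$-valued process playing the role of the reference measure. The first step is to observe that in the construction of \cite{DGP12} mutation acts only on the type labels that decorate the individuals of the population and does not modify the underlying resampling dynamics. Consequently the $\UM$-valued marginal of the tree-valued Fleming--Viot process with mutation but without selection (i.e.\ $\alpha=0$) has the same law as the neutral process $\U$ of Theorem \ref{thm.FV.neutral}. Since $P(\U_0^\alpha\in\UMB\cap\UMc)=1$, the same holds for the initial condition of the reference neutral process, and Theorem \ref{thm.FV.neutral} yields
\begin{equation}
P\bigl(\U_t^{0}\in\UMB\cap\UMI\cap\UMc\bigr)=1,\qquad\forall t>0.
\end{equation}

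The second step is to invoke Theorem 2 of \cite{DGP12}, which expresses the law of the path $(\U_t^\alpha)_{t\in[0,T]}$ on $D([0,T],\UM)$ as an exponential Girsanov-type change of measure relative to the law of $(\U_t^{0})_{t\in[0,T]}$. In particular, for every finite $T>0$,
\begin{equation}
\mathcal{L}\bigl((\U_t^\alpha)_{t\in[0,T]}\bigr)\ll\mathcal{L}\bigl((\U_t^{0})_{t\in[0,T]}\bigr).
\end{equation}
The set $\UMB\cap\UMI\cap\UMc$ is Borel in $\UM$: by Theorem \ref{thm_polish}(c) $\UMB$ is closed in $\UMc$, the identifiable set $\UMI$ is Borel (it is shown to be dense $G_\delta$ in Theorem \ref{thm_polish}(b), see also Remark \ref{r.hom.ext.1}), and by Remark \ref{rem.Borel} the Borel $\sigma$-algebras generated by the Gromov-weak and Gromov-weak atomic topologies agree, so $\UMc$ itself is Borel. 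Hence the event $\{\U_t^\alpha\notin \UMB\cap\UMI\cap\UMc\}$ is measurable on path space.

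Fixing $t>0$ and applying the above absolute continuity with $T=t$, the null set produced by the first step under the neutral law is also a null set under the law of $\U^\alpha$, which gives the claim. The main obstacle I foresee is the precise matching of the setup of \cite{DGP12} Theorem 2 with that of Theorem \ref{thm.FV.neutral}: one must verify that the process appearing as the reference measure in the Girsanov formula projects exactly onto the neutral $\UM$-valued tree-valued Fleming--Viot process of Theorem \ref{thm.FV.neutral}, so that the initial condition assumption is inherited and Theorem \ref{thm.FV.neutral} is legitimately applied to the reference process. Once this identification is carried out, the transfer of the almost-sure containment from $\U^{0}$ to $\U^\alpha$ is immediate.
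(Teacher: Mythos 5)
Your proposal is correct and follows essentially the same route as the paper, which simply declares the corollary to be ``a direct consequence of the Girsanov transform --- Theorem 2 in \cite{DGP12}'' combined with Theorem \ref{thm.FV.neutral}. Your elaboration (absolute continuity of the path laws, measurability of $\UMB\cap\UMI\cap\UMc$, and the observation that mutation leaves the $\UM$-valued marginal of the reference process equal to the neutral process) fills in exactly the details the paper leaves implicit.
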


\section{Preparations for the proofs}

We start with some preparations needed for the proofs of our results. 
In section \ref{sec.bounds} we prove some bounds for the Gromov-Prohorov metric 
and in section \ref{sec.concatenation} we introduce the notion of concatenation of trees, which will be useful in order to prove the continuity of $\F$. 

\subsection{Bounds for the Gromov-Prohorov metric and the function \texorpdfstring{$\Cut$}{Phi}}\label{sec.bounds}

We start with the following observation. 
 
\begin{remark}\label{rem_Phi}
Let $(X,r,\mu)$ and $(\tilde X,\tilde r,\tilde \mu)$ be two equivalent ultra-metric measure spaces. 
If we denote by $\{\rep_i^h:\ i = 1,\ldots,\num(h)\}$
and  $\{\tilde \rep_i^h:\ i = 1,\ldots,\tilde \num(h)\}$ two families of representatives in the sense of 
Lemma \ref{lem_representatives}, then it is not hard to see (see also Remark \ref{rem_ind}) that 
\begin{equation}
\begin{split}
&\left[\{\rep_i^h:\ i \in \{1,\ldots,\num(h)\}\},\ r, \ \sum_{i \in \{1,\ldots,\num(h)\}} 
\mu(\bar B^{r}(\rep_i^h,h)) \delta_{\rep_i^h}\right] \\
&{}\hspace{2cm} = \left[\{\tilde \rep_i^h:\ i \in \{1,\ldots,\tilde \num(h)\}\},\ \tilde r, 
\ \sum_{i \in \{1,\ldots,\tilde \num(h)\}} \tilde  \mu(\bar B^{\tilde r}(\tilde \rep_i^h,h)) 
\delta_{\tilde \rep_i^h}\right]
\end{split}
\end{equation}
and it is possible to define for $h > 0$
\begin{equation}
\CutTwo_h(\mfu) = \left[\{\rep_i^h:\ i \in \{1,\ldots,\num(h)\}\},\ r, \ 
\mu_h\right]
\end{equation}
and 
\begin{equation}
\Cut_h(\mfu) = \left[\{\rep_i^h:\ i \in \{1,\ldots,\num(h)\}\},\ r- h \cdot 1(\rep_i^h \neq \rep_j^h), \ \mu_h\right],
\end{equation}
where
\begin{equation}
\mu_h:=\sum_{i \in \{1,\ldots,\num(h)\}} \mu\left(\bar B(\rep_i^h,h)\right) \delta_{\rep_i^h}.
\end{equation}
\end{remark}

These functions will appear in several proofs. The reason is the following Lemma: 
 
\begin{lemma}\label{lem_restr}
Let $0<\h$ and $\mfu =  [X,r,\mu]\in \UM$.
\begin{itemize}
\item[(i)] If $A \subset X$ is measurable, and $\mu_A(\cdot):= \mu(\cdot \cap A)$ then 
\begin{equation}
\dGP ([A,r,\mu_A], [X,r,\mu]) \le \mu(X\backslash A).
\end{equation}
\item[(ii)] If $\mfu' = [X,r,\mu'] \in \UM$, then 
\begin{equation}
\dGP(\mfu,\mfu') \le \dPr(\mu,\mu'),
\end{equation}
where the Prohorov distance is taken on the set of Borel-measures on $X$ (see \eqref{eq_def_Pr}). 
\item[(iii)]  Let $\Cut_h$ and $\CutTwo_h$ be the functions from Remark \ref{rem_Phi}. Then
\begin{equation}
\dGP(\mfu, \CutTwo_h(\mfu)) \le h,\qquad \dGP(\Cut_h(\mfu),\CutTwo_h(\mfu)) \le h.
\end{equation}
\item[(iv)] The functions $h \mapsto \Cut_h(\mfu)$ and $h \mapsto \CutTwo_h(\mfu)$ as functions from $(0,\infty) \to \UM$ are both cadlag.
\end{itemize}
\end{lemma}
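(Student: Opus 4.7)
I would approach the four parts together by reducing each to a Prohorov-distance calculation in a suitable ambient metric space, directly applying the definition of $\dGP$.

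For part (i), the inclusion $A\hookrightarrow X$ is an isometric embedding, so taking $Z=X$ in the definition of $\dGP$ lets me view both $\mu_A$ and $\mu$ as Borel measures on $X$. For any closed $C\subset X$, $\mu(C)-\mu_A(C)=\mu(C\cap(X\setminus A))\le \mu(X\setminus A)$ and $\mu_A(C)\le \mu(C)$, so $\dPr^X(\mu_A,\mu)\le \mu(X\setminus A)$ and the claim follows. Part (ii) is the same observation in even simpler form: take $Z=X$ with identity embeddings to obtain $\dGP(\mfu,\mfu')\le \dPr^X(\mu,\mu')$ directly from the definition.

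For the first half of (iii) I introduce the measurable projection $\psi:\supp(\mu)\to\{\rep_i^h\}$ sending each $x$ to the unique $\rep_i^h$ with $x\in \bar B(\rep_i^h,h)$; this projection moves points by at most $h$ and satisfies $\psi_\ast\mu=\mu_h$. Using $Z=X$, one checks $\mu(C)\le \mu_h(C^h)$ and $\mu_h(C)\le \mu(C^h)$ for closed $C$ (any ball meeting $C$ has its representative in the closed $h$-neighborhood of $C$, and conversely), which yields $\dPr^X(\mu,\mu_h)\le h$ up to an arbitrarily small slack absorbed by the infimum defining $\dGP$. For the second half, $\Cut_h(\mfu)$ and $\CutTwo_h(\mfu)$ share the underlying set $\{\rep_i^h\}$ and measure $\mu_h$ but their metrics differ by exactly $h$ on off-diagonal pairs. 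I would embed both into $Z=\{\rep_i^h\}\sqcup\{\rep_i^h\}$ with a metric $d_Z$ that restricts to $r$ on the first copy, to the $\Cut_h$-metric on the second, with $d_Z((x,0),(y,1))=r(x,y)$ for $x\neq y$ and $d_Z((x,0),(x,1))=h$. The triangle inequality in the mixed case reduces to verifying $r(x,y)\le r(x,z)+r(z,y)-h$ for distinct $x,y,z$; this follows from ultra-metricity, which gives $r(x,y)\le\max(r(x,z),r(z,y))$, combined with $\max(a,b)\le a+b-\min(a,b)\le a+b-h$ since all pairwise distances among distinct representatives strictly exceed $h$. The two factors embed isometrically into $Z$, and the embedded copies of $\mu_h$ are related by the transport $(\rep_i^h,0)\mapsto(\rep_i^h,1)$ at cost $h$, so the Prohorov distance in $Z$ is at most $h$, finishing (iii).

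Finally, for (iv) I would show right continuity and existence of left limits for $h\mapsto\CutTwo_h(\mfu)$; the corresponding statement for $\Cut_h$ then follows by applying (iii). Fix $h>0$ and $h'\in(h,h+\delta)$. Choose the $h'$-representatives to lie in $\{\rep_j^h\}$, so that $\mu_h$ and $\mu_{h'}$ agree on every ball that is not ``merged'' when passing from $h$ to $h'$. The merged-index set $M_\delta:=\{j:\exists k\neq j,\ r(\rep_j^h,\rep_k^h)\le h+\delta\}$ decreases to $\emptyset$ as $\delta\downarrow 0$ because all off-diagonal distances among $\{\rep_j^h\}$ strictly exceed $h$; since $\mu$ is finite, the total merged mass $m_\delta:=\sum_{j\in M_\delta}\mu(\bar B(\rep_j^h,h))$ tends to $0$. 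A short bookkeeping argument writes $\mu_h=\nu+\mu_h^M$ and $\mu_{h'}=\nu+\mu_{h'}^M$ with the $M$-parts having common mass $m_\delta$, giving $\dPr^X(\mu_h,\mu_{h'})\le m_\delta$ and thus right continuity via (ii). For the existence of left limits an analogous Cauchy argument works: for $h_1<h_2<h$ the mass merged in passing from $h_1$ to $h_2$ vanishes as $h_1,h_2\uparrow h$, and the limit is the open-ball analogue of $\CutTwo_h(\mfu)$ (with each $\bar B(\rep_i^h,h)$ replaced by $\{y:r(\rep_i^h,y)<h\}$). The main obstacle I expect is the triangle-inequality verification for the auxiliary metric in (iii) together with the clean bookkeeping for the left-limit argument in (iv); both rely essentially on ultra-metricity and the finiteness of $\mu$.
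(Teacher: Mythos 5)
Parts (i)--(iii) are correct and essentially the paper's own argument: (i) and (ii) use the identity embedding, and your disjoint-union metric in (iii) is in fact the same as the one the paper borrows from L\"ohr (the formula $d(\varphi_1(x),\varphi_2(y))=\inf_z(r^1(x,z)+r^2(y,z))+h$ evaluates to $r(x,y)$ for $x\neq y$ and to $h$ for $x=y$), and your triangle-inequality check via ultrametricity and $r(\rep_i^h,\rep_j^h)>h$ is the right verification.

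There is, however, a genuine gap in (iv). You claim that $M_\delta:=\{j:\exists k\neq j,\ r(\rep_j^h,\rep_k^h)\le h+\delta\}$ decreases to $\emptyset$ ``because all off-diagonal distances strictly exceed $h$,'' and hence that the total merged mass $m_\delta$ tends to $0$. Strict pairwise inequality does not give a uniform gap when $\num(h)=\infty$: take $\rep_0,\rep_1,\rep_2,\ldots$ with $r(\rep_0,\rep_n)=h+1/n$ (a comb, which is ultrametric); then $\rep_0\in M_\delta$ for every $\delta>0$, so $\bigcap_\delta M_\delta\neq\emptyset$ and $m_\delta\not\to 0$ whenever $\mu(\bar B(\rep_0,h))>0$. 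What does tend to zero is not the total mass of merged balls but the mass that must actually be transported, namely $\sum_i\bigl(m(I_i^\delta)-\max_{j\in I_i^\delta}\mu(\bar B(\rep_j^h,h))\bigr)$, and proving this requires an extra step your bookkeeping omits: choose finitely many indices $F$ with $\sum_{j\notin F}\mu(\bar B(\rep_j^h,h))<\eps$; since $F$ is finite, $\min_{j\neq k\in F}r(\rep_j^h,\rep_k^h)>h+\delta_0$ for some $\delta_0>0$, so for $\delta<\delta_0$ each group contains at most one element of $F$ and the transported mass is at most $\eps$. The same repair is needed in your left-limit argument. Separately, your one-line claim that the cadlag property of $\Cut_h$ ``follows by applying (iii)'' is too quick: (iii) only gives $\dGP(\Cut_{h'}(\mfu),\CutTwo_{h'}(\mfu))\le h'$, which does not vanish as $h'\downarrow h$. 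The paper instead uses the semigroup identity $\Cut_{h+\delta}(\mfu)=\Cut_\delta(\Cut_h(\mfu))$ together with $\dGP(\Cut_\delta(\mfv),\mfv)\le\dGP(\Cut_\delta(\mfv),\CutTwo_\delta(\mfv))+\dGP(\CutTwo_\delta(\mfv),\mfv)\le 2\delta$, which you should make explicit.
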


\begin{proof}
(i) Note that  the identity $id:X \rightarrow X$ is an isometric embedding from $A$ to $X$. Using the definition of the Gromov-Prohorov metric from Proposition \ref{prop_M_polish}, it is enough to bound (note that $\mu_A \le \mu$):
\begin{equation}
\begin{split}
d_{Pr}(\mu_A,\mu) = \inf\big\{\epsilon > 0: \mu (B) \le &\mu_A(B^\e) + \e, \\
&\forall B \subset X \text{ Borel-measurable}\big\},
\end{split}
\end{equation}
where 
\begin{equation}
B^\e = \{x\in X:\ \exists x' \in B,\ r(x,x') < \e\}. 
\end{equation}
Note that if $\mu(X\backslash A) = 0$ then  $d_{Pr}(\mu_A,\mu) =0$ and if $\mu(X\backslash A) > 0$ we can take $\epsilon = \mu(X\backslash A)$ and the result follows. \\

\noindent (ii) As in (i) one can use the identity as isometric embedding. \\

\noindent (iii) We use the notation of Remark \ref{rem_Phi} and note that $id$ is an isometric embedding of $\{ \rep_i^\h, \ i\in \N\}$ in $X$. Define the  measure $\bar \mu$ on $X \times X$ by
\begin{equation}
\bar \mu ( A_1  \times A_2 ):= \sum_{i \in \N}  \mu(A_1\cap \bar B(\rep_i^\h,\h)) \delta_{\rep_i^\h}(A_2). 
\end{equation}
for all measurable sets $A_1,A_2 \subset X$ and observe that $\bar \mu$ is a coupling of $\mu$ and $\mu_h$. Since $\mu_h(\{ \rep_i^\h , \ i\in \N\}) = \mu(X)$ and by the definition of the Gromov-Prohorov metric from Proposition \ref{prop_M_polish} together with Theorem 3.1.2 in \cite{EK86} (with the obvious extension to couplings of finite measures with the same mass), we get
\begin{equation}
\begin{split}
\dGP(\mfu,&\CutTwo_\h(\mfu)) \le\\
&\inf_{\nu}\inf\Big\{ \epsilon > 0:\ \nu \big(\{(x,x') \in X\times X: r(x,x')\ge  \e\}\big) \le \e\Big\},
\end{split}
\end{equation}
where the infimum is taken over all couplings $\nu$ of $\mu$ and $\mu_h$. It follows that
\begin{equation}
\begin{split}
\dGP(\mfu,\CutTwo_\h(\mfu))\le \inf\{ \epsilon > 0:\ \bar \mu (\{(x,x') \in X \times X: r(x,x')\ge  \e\}) \le \e\} 
\end{split}
\end{equation}
and if we choose $\e >\h$ then 
\begin{equation}
\begin{split}
\bar \mu &\left(\big\{(x,x') \in X \times X: r(x,x') \ge  \e\big\}\right) \\
&\quad \le \sum_{i,j \in \N, \ i\not=j} \mu(\bar B(\rep_i^\h,\h)\cap \bar B(\rep_i^\h,\h)) \delta_{\rep_i^\h}(\bar B(\rep_j^\h,\h))= 0.
\end{split}
\end{equation}
For the second part, we use the same argument as in section 3 in \cite{Loehr13}: Let $Y:=\{\rep_i^h:\ i \in \{1,\ldots,\num(h)\}$, $r^1 = r$, $r^2 = r - h 1(x\neq y)$ and $\mu^1 = \mu^2 = \mu_h$.
We denote by $Y \uplus Y$ the disjoint union of $Y$ and $Y$ and let $\varphi_i: Y\to Y \uplus Y$ be the canonical embeddings, $i = 1,2$. Define the metric $d$ on $Y \uplus Y$ by 
\begin{align}
 d( \varphi_1(x),\varphi_1(y)) &= r^1(x,y) , \\
 d( \varphi_2(x),\varphi_2(y)) &= r^2(x,y) ,  \\
 d( \varphi_1(x),\varphi_2(y)) &= \inf_{z \in Y} (r^1(x,z)+r^2(y,z))+ h, 
\end{align}
where $x,y \in Y$. Then, as in \cite{Loehr13} it is easy to see that this is a metric on $Y\uplus Y$ that extends the metrics $r^1 $ and $r^2$ (i.e. $\varphi_i$ is an isometry for $i = 1,2$) and we have 
\begin{equation}
\varphi_2(\varphi_1^{-1}(F)) \subset F^{h_0}:= \{x \in Y \uplus Y :\ \exists x' \in F \text{ s.t. } 
d(x,x') <h_0\}, 
\end{equation}
for all $h_0 > h$. Since $\mu^1 = \mu^2$ this gives: 
\begin{equation}
\mu^1 \circ \varphi_1^{-1}(F) = \mu^2 \circ \varphi_1^{-1}(F) \le \mu^2 \circ \varphi_2^{-1} (\varphi_2(\varphi_1^{-1}(F)) 
\le \mu^2 \circ \varphi_2^{-1} (F^{h_0}) + h_0, 
\end{equation}
for all $h_0 > h$ and the result follows. \\

\noindent (iv) A similar argument as in (iii) shows that $\CutTwo_{h'}(\mfu) \rightarrow\CutTwo_{h}(\mfu)$ for $h' \downarrow h$ and by definition we have 
$\Cut_{h + \delta} (\mfu) = \Cut_\delta(\Cut_h(\mfu))$ and hence by (iii) $\Cut_{h'}(\mfu) \rightarrow\Cut_{h}(\mfu)$ for $h' \downarrow h$. 
This shows the right continuity. For the existence of the left limits set  
\begin{align}
\mfu_h &:= \left[\{\rep_i^h:\ i \in \{1,\ldots,\num(h)\}\},\ r- h \cdot 1(\tau_i \neq \tau_j), \ \mu_h^\circ\right], \\ 
\hat \mfu_h &:= \left[\{\rep_i^h:\ i \in \{1,\ldots,\num(h)\}\},\ r, \ \mu_h^\circ\right], 
\end{align}
where 
\begin{equation}
\mu_h^\circ(A):= \sum_{i\in \{1,\ldots,\num(h)\}} \mu\left(B(\rep_i^h,h)\right) \delta_{\rep_i^h}(A)
\end{equation} 
is given in terms of open balls with radius $< h$ (instead of $\le h$ - see  Remark \ref{rem_pos} (ii)). 
By a similar argument as in (iii) together with the fact that 
\begin{equation}
 \lim_{h'\uparrow h} \mu\left(B(\rep_i^h,h) \setminus \bar B(\rep_i^h,h')\right) = 0,
\end{equation}
we get 
\begin{equation}
\dGP(\Cut_{h'}(\mfu),\mfu_h) \vee \dGP(\CutTwo_{h'}(\mfu),\hat \mfu_h) \rightarrow 0,\qquad h' \uparrow h.
\end{equation}
\end{proof}

\subsection{Concatenation of trees}\label{sec.concatenation}

We summarize some properties of the concatenation of trees given in \cite{infdiv} (see also \cite{EM}).

\begin{definition}(Concatenation of trees)\label{def.concat} Let $h>0$ and 
$\mfu_{i} = [X_i,r_i,\mu_i]$, $i \in I$ ($I \subset \N \cup \{\infty\}$) be a sequence in $\UM$ with $\sum_{i \in I}\mu_i(X_i) \le C < \infty$, 
\begin{equation}\label{eq_mass1}
\overline{\mfu_i}:=\mu_i(X_i) = \sqrt{\nu^{2,\mfu_i}[0,\infty))}> 0
\end{equation} 
 and 
\begin{equation}\label{eq.def.concat}
\nu^{2,\mfu_i}(h,\infty) = 0.
\end{equation} 
We define the \it{concatenation}:
\begin{equation}
\begin{split}
\bigsqcup_{i \in I}^h \mfu_{i} := \mfu_{i_1} \sqcup_h \mfu_{i_2} \sqcup_h \ldots := \left[\biguplus_{i \in I}X_i,r^h,\sum_{i \in I}\mu_i\right],
\end{split}
\end{equation} 
where $\biguplus_{i \in I} X_i$ is the disjoint union of the $X_i$ and 
\begin{equation}\label{eq.conc.metric}
r^h(x,y) = \left\{
\begin{array}{ll}
r_i(x,y), &\quad \textrm{ for } x,y \in X_i,\\
h , &\quad \textrm{ for } x \in X_i,\ y \in X_j,\ i \not=j.
\end{array}\right.
\end{equation}
\end{definition}

\begin{definition}($h$-top)\label{def_htop} In the sense of Lemma \ref{lem_representatives} we define for $0 <h$  and $\mfu = [X,r,\mu]\in \UM$ the {\it $h$-top} $\Top_h(\mfu) \in \UM$ as
\begin{equation}
\Top_h(\mfu) := \bigsqcup\limits_{i \in \{1,\ldots,n(h)\}}^h \left[\bar B(\tau_i^h,h) ,r,\mu|_{\bar B(\tau_i^h,h) }\right] 
\end{equation}
where  $\mu|_{\bar B(\tau_i^h,h) } (\cdot) = \mu(\cdot\cap \bar B(\tau_i^h,h) )$. By Remark \ref{rem_ind} this definition is independent of the representative $(X,r,\mu)$.
\end{definition}

\begin{remark}\label{def_psi_h}
Note that $\Top_h([X,r,\mu])= [X,r^h,\mu] $ with 
\begin{equation}
r^h(x,y) = \left\{ 
\begin{array}{ll}
r(x,y),&{}\quad \textrm{if }r(x,y) \le h,\\
h,&{}\quad \textrm{otherwise}.
\end{array}\right.
\end{equation}
\end{remark}

\begin{remark}\label{Bem_fun} 
(i) Let $h> 0$ and $\mfu_i:=[\bar B(\tau_i^h,h) ,r,\mu|_{\bar B(\tau_i^h,h) }] \in \UM$, then
$\nu^{2,\mfu_i}((h,\infty) = 0$ for all $i = 1,2,\ldots,n(h)$. \par
(ii) Let $\mfu \in \UM$ and $\mfu_i$ given as in (i) for some $h > 0$. If $x \in \supp(\mu)$, then there is a $i,j \in \{1,\ldots,n(h)\}$
such that 
\begin{equation}
\mu(\bar B(x,h))  = \overline{\mfu_i} = f(h,\mfu_i) = f(h,\mfu)_{j}.
\end{equation}
(iii) As in (ii) we get for $h >0$: 
\begin{equation}
\overline{\mfu} =  \mu(X)  = \sum_{i =1}^{n(h)} f(h,\mfu)_i = \sum_{i =1}^{n(h)} \overline{\mfu_i}.
\end{equation}
Note that $\mfu \mapsto \overline{\mfu}$ is continuous, since $\overline{\mfu} = \sqrt{\nu^{2,\mfu}([0,\infty))}$. 
\end{remark}

\begin{definition} (Concatenation as partial order) 
Define for $0 <h$ the relation $\le_h$ on  $\mathbb U$ by saying $\mfu \le_h \Sk$ if there is a $\mfu' \in \UM$ with $\nu^{2,\mfu'}(h,\infty) = 0$ such that  $\Top_h(\Sk) = \Top_h(\mfu) \sqcup_h \mfu'$.
\end{definition}

\begin{lemma}\label{lem_topproperties} Let $0 < h$, $\mfu,\ \Sk \in \UM$, $(\mfu_n)$, $(\Sk_n)$ be two sequences in $\UM$ and $\UM$ be equipped with the Gromov-weak topology. 
\begin{itemize}
\item[(i)]  Suppose that $\Psi_h(\mfu_n) \rightarrow \mfu$ and 
$\chi_n := \Psi_h(\mfu_n) \sqcup_h \Top_h(\Sk_n) \rightarrow \chi$, for $n \rightarrow \infty$. Then $\mfu \le_h \chi$. 
\item[(ii)] If  $\dGP(\mfu_n, \mfu)\rightarrow 0$, 
then  $\Top_{h}(\mfu_n) \rightarrow \Top_h(\mfu)$ for all $h > 0$,  i.e. $\mfu \mapsto \Top_h(\mfu)$ is continuous. 
Moreover, if $h_n \rightarrow h$, then $\Top_{h_n}(\mfu) \rightarrow \Top_{h}(\mfu)$. 
\item[(iii)] If $\mfu_n \rightarrow \mfu$,  for $n \rightarrow \infty$, and $\Sk_n \le_h \mfu_n$, for all $n \in \N$,  then $\{\Top_h(\Sk_n):\ n\in\N\}$ is compact. 
\item[(iv)] Assume we are in the situation of Remark \ref{Bem_fun} for some $h > 0$, then 
$\nu^{2,\mfu_i} \le \nu^{2, \mfu}$ for all $i =1,\ldots,n(h)$. 
\item[(v)] Let $\mfu_n,\Sk_n \in \UM$ such that $\mfu_n \rightarrow \mfu \in \UM$ and $\Sk_n \rightarrow \Sk \in \UM$ 
and assume that $\mfu_n,\mfu,\Sk_n,\Sk$ satisfy \eqref{eq.def.concat}, then $\mfu_n\sqcup_h \Sk_n \rightarrow \mfu\sqcup_h\Sk$.
\end{itemize}
\end{lemma}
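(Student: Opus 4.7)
The plan is to establish the parts in the order (iv), (ii), (v), (iii), (i), since (iii) relies on (ii) and (i) relies on both (iii) and (v). Part (iv) is a direct measure-theoretic inequality: since $\mfu_i = [\bar B(\rep_i^h, h), r, \mu|_{\bar B(\rep_i^h, h)}]$ uses the restricted measure with the same metric, $\nu^{2, \mfu_i} = R^{2,(X,r)}_* (\mu|_{\bar B})^{\otimes 2} \le R^{2,(X,r)}_* \mu^{\otimes 2} = \nu^{2, \mfu}$ as Borel measures. For part (ii), the key observation is $\nu^{k, \Top_h(\mfu)} = (T_h)_* \nu^{k, \mfu}$, where $T_h \colon (r_{ij})_{i<j} \mapsto (r_{ij} \wedge h)_{i<j}$ is continuous on $\mathbb R_+^{\binom{k}{2}}$; the continuous mapping theorem for weak convergence of finite measures then yields $\Top_h(\mfu_n) \to \Top_h(\mfu)$. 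The $h_n \to h$ version follows from $T_{h_n} \to T_h$ uniformly on bounded sets together with a Skorokhod coupling. For (v), I would expand
\[
\nu^{k, \mfu \sqcup_h \Sk} = \sum_{A \subset [k]} \Theta_{A, h}\!\left( \nu^{|A|, \mfu} \otimes \nu^{k - |A|, \Sk} \right),
\]
where $\Theta_{A, h}$ rearranges the coordinates according to $A$ and inserts $h$ in the cross positions; each $\Theta_{A, h}$ is continuous and the tensor product is jointly continuous on finite measures, so together with Remark \ref{r.totalMassCont} the claim follows.

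For part (iii), applying (ii) gives $\Top_h(\mfu_n) \to \Top_h(\mfu)$, so the masses $\overline{\mfu_n} = \overline{\Top_h(\mfu_n)}$ are bounded. The assumption $\Sk_n \le_h \mfu_n$ means $\Top_h(\mfu_n) = \Top_h(\Sk_n) \sqcup_h \mfu_n'$, hence $\overline{\Top_h(\Sk_n)} \le \overline{\Top_h(\mfu_n)}$ is bounded as well. Each $\nu^{k, \Top_h(\Sk_n)}$ is supported on the compact cube $[0, h]^{\binom{k}{2}}$ with uniformly bounded total mass, so it is tight for every $k$. The standard relative compactness criterion in the Gromov-weak topology (bounded total mass plus tightness of $\nu^{k, \cdot}$ for every $k$, cf. \cite{GPW09}) then gives that $\{\Top_h(\Sk_n)\}$ is relatively compact.

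For part (i), observe first that $\chi_n = \Psi_h(\mfu_n) \sqcup_h \Top_h(\Sk_n)$ has $\nu^{2, \chi_n}((h, \infty)) = 0$, because both pieces have diameter $\le h$ and the cross distances equal $h$; this property passes to the Gromov-weak limit, so $\Top_h(\chi) = \chi$ and similarly $\Top_h(\mfu) = \mfu$. Continuity of total mass gives $\overline{\Top_h(\Sk_n)} \to \overline{\chi} - \overline{\mfu} =: m \ge 0$. If $m = 0$, then $\nu^{k, \Top_h(\Sk_n)} \to 0$ for every $k$; passing to the limit in the formula of (v) (only the $A = [k]$ term survives) gives $\chi_n \to \mfu$, whence $\chi = \mfu$ and $\mfu \le_h \chi$ holds trivially. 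Otherwise, the argument of (iii) applies verbatim to $\{\Top_h(\Sk_n)\}$ (diameters $\le h$, masses bounded), so along a subsequence $\Top_h(\Sk_n) \to \mfu' \in \UM$ with $\nu^{2, \mfu'}((h, \infty)) = 0$; applying (v) along that subsequence yields $\chi_n \to \mfu \sqcup_h \mfu'$, and uniqueness of limits gives $\chi = \Top_h(\mfu) \sqcup_h \mfu'$, i.e., $\mfu \le_h \chi$. The main obstacle is invoking the Gromov-weak relative compactness criterion used in (iii) cleanly; once that is in hand, (i) reduces to a routine subsequence extraction via (v), and (ii), (iv), (v) are direct applications of the continuous mapping theorem.
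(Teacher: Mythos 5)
Parts (ii), (iv) and (v) of your proposal are sound: the identity $\nu^{k,\Top_h(\mfu)}=(T_h)_\ast\nu^{k,\mfu}$ with $T_h$ the coordinatewise truncation at $h$, the monotonicity of pushforwards under $\mu|_{\bar B}\le\mu$, and the decomposition of $\nu^{k,\mfu\sqcup_h\Sk}$ over subsets $A\subset\{1,\dots,k\}$ are all correct, and the continuous mapping theorem finishes those parts. Note that the paper does not prove this lemma at all but cites Proposition 2.17, Lemma 3.3 and Lemma 3.5 of \cite{infdiv}, so a self-contained argument is necessarily a different route.

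There is, however, a genuine gap in part (iii), and it propagates to part (i), which you reduce to (iii). You invoke the criterion ``bounded total mass plus tightness of $\nu^{k,\cdot}$ for every $k$'' for relative compactness in the Gromov-weak topology. That criterion is false: the correct characterization (Theorem 2 and Proposition 7.1 in \cite{GPW09}; Proposition B.2 in \cite{infdiv}) additionally requires uniform decay of the modulus of mass distribution, $\lim_{\delta\downarrow0}\sup_n\nu_\delta(\cdot)=0$ in the notation of \eqref{eq.modOfMass}. A counterexample to your version: $\mfu_n=[\{1,\dots,n\},r_n,\frac1n\sum_{i}\delta_i]$ with $r_n(i,j)=h$ for $i\neq j$ has every $\nu^{k,\mfu_n}$ supported on $[0,h]^{\binom{k}{2}}$ with mass one, yet $\nu^{2,\mfu_n}\Rightarrow\delta_h$ and no complete separable mm-space has $\delta_h$ as its distance matrix distribution (it would force $\mu(B(x,h))=0$ for $\mu$-a.e.\ $x$, contradicting $x\in\supp\mu$), so $\{\mfu_n\}$ is not relatively compact. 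Your argument for (iii) never uses the hypothesis $\mfu_n\to\mfu$, and that hypothesis is exactly what excludes this pathology: writing $\Top_h(\mfu_n)=\Top_h(\Sk_n)\sqcup_h\mfu_n'$, the cross-distances in the concatenation equal $h$, so every open ball of radius $\bar\eps<h$ centred in the $\Sk_n$-component has the same mass computed in $\Top_h(\Sk_n)$ as in $\Top_h(\mfu_n)$, and balls of radius $<h$ in $\Top_h(\mfu_n)$ coincide with those in $\mfu_n$; hence $\nu_\delta(\Top_h(\Sk_n))\le\nu_\delta(\mfu_n)$ for small $\delta$, and the relative compactness of the convergent sequence $(\mfu_n)$ supplies the missing uniform modulus bound. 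With this inserted, (iii) is correct and your subsequence extraction in (i) goes through (the $m=0$ case also needs the convention that concatenation with the neutral element $[\{0\},0,0]$ is permitted in the definition of $\le_h$, as the paper implicitly assumes in \eqref{eq.conv.neutral}).
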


\begin{proof}
This is a summary of Proposition 2.17, Lemma 3.3 and Lemma 3.5 in \cite{infdiv}. 
\end{proof}

\section{A short note on the weak atomic topology}\label{sec.weak.atomic}

Here we give a short introduction to the weak atomic topology (see \cite{EKatomic}) and prove a Proposition that 
gives a characterization of convergence in this topology in terms of cumulative distribution functions.

\begin{definition}\label{d.weak.atomic}(Weak atomic topology) Let $(E,r)$ be a complete separable metric space and $\mu_1,\mu_2,\ldots \in \mathcal M_f(E)$ (space of finite 
Borel-measures on $E$). We say that $\mu_n \rightarrow \mu$ in the weak-atomic topology if 
\begin{itemize}
\item $\mu_n \Rightarrow \mu$ in the weak topology and 
\item $\mu_n^\ast  \Rightarrow \mu^\ast $ in the weak topology, where $\mu^{\ast} := \sum_{x \in E} \mu(\{x\})^2 \delta_x$.  
\end{itemize}
\end{definition}

\begin{proposition}\label{p.weak.atomic}
Assume that $E = \mathbb R$, and let $\mu,\mu_1,\mu_2, \ldots \in \mathcal M_f(E)$ be finite measures
with $\mu_n(E) \rightarrow \mu(E)$. Then $\mu_n \rightarrow \mu$ in the weak atomic topology 
if and only if $F_n \rightarrow F$ in the Skorohod topology on $D(\mathbb R,\mathbb R_+)$, where 
$F(t):=\mu((-\infty,t]),F_1(t):=\mu_1((-\infty,t]),F_2(t):=\mu_2((-\infty,t]),\ldots$, $t \ge 0$. 
\end{proposition}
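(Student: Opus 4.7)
The strategy is to exploit that each $F_n$ and $F$ is monotone cadlag, so $F$ determines $\mu$, continuity points of $F$ correspond to non-atom points of $\mu$, and jump sizes $\Delta F(t) := F(t) - F(t-)$ equal the atom masses $\mu(\{t\})$. Hence $\mu^* = \sum_t (\Delta F(t))^2 \delta_t$ is precisely the ``squared jump structure'' of $F$. Under this dictionary, both directions of the equivalence reduce to statements about how jumps of $F_n$ approach jumps of $F$.

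For the implication Skorohod $\Rightarrow$ weak atomic, I would use that Skorohod convergence provides time changes $\lambda_n \to \mathrm{id}$ uniformly on compacts with $F_n \circ \lambda_n \to F$ uniformly on compacts. At any continuity point $t$ of $F$ this gives $F_n(t) \to F(t)$ by combining uniform convergence on a neighborhood with continuity, so $\mu_n \Rightarrow \mu$ in the weak topology (the hypothesis $\mu_n(\mathbb{R}) \to \mu(\mathbb{R})$ upgrades weak to the convergence of finite measures). For $\mu_n^\ast \Rightarrow \mu^\ast$ I would argue that uniform convergence of cadlag functions transfers jumps: for each jump $(t,j)$ of $F$ there exist $t_n \to t$ with $\Delta F_n(t_n) \to j$; and conversely, any sequence $t_n \in [-T,T]$ with $\Delta F_n(t_n)$ not converging to $0$ has subsequential limits $t$ with $\Delta F(t) > 0$. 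Together with a tightness argument on the sums $\sum_t(\Delta F_n(t))^2$ this yields weak convergence of the atomic measures $\mu_n^\ast$.

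For the converse weak atomic $\Rightarrow$ Skorohod I would construct the required time changes. Enumerate the atoms of $\mu$ as $\{x_k\}_{k \ge 1}$ with $j_k := \mu(\{x_k\})>0$ arranged so $j_1 \ge j_2 \ge \cdots$. By $\mu_n^\ast \Rightarrow \mu^\ast$, for each $K$ and all large $n$ one can select atoms $x_k^n$ of $\mu_n$ with $x_k^n \to x_k$ and $\mu_n(\{x_k^n\}) \to j_k$ for $k \le K$, and moreover the ``residual'' atoms $\mu_n^\ast - \sum_{k\le K}(\mu_n(\{x_k^n\}))^2 \delta_{x_k^n}$ converges weakly to $\sum_{k>K}j_k^2\delta_{x_k}$. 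I would let $\lambda_n$ be a piecewise linear strictly increasing homeomorphism of $\mathbb{R}$ that interpolates $\lambda_n(x_k)=x_k^n$ for $k\le K_n$ (with $K_n \to \infty$ slowly) and equals the identity outside a sufficiently large interval; then $\lambda_n \to \mathrm{id}$ uniformly on compacts.

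The main step is then to show $F_n \circ \lambda_n \to F$ uniformly on each $[-T,T]$. Decompose $\mu_n = \mu_n^a + \mu_n^c$, $\mu = \mu^a + \mu^c$ into atomic and continuous parts with corresponding cdfs. After the time change the large atoms of $F_n^a$ are aligned exactly with those of $F^a$ and masses converge, so their contribution to $\sup|F_n^a\circ \lambda_n - F^a|$ vanishes up to the tail $\sum_{k>K} j_k$, which is $<\varepsilon$ by choice of $K$; the small atoms of $\mu_n$ contribute $O(\varepsilon)$ by the total-mass assumption combined with the atomic control on their $\ell^2$-mass. For the continuous parts, $F^c$ is uniformly continuous on $[-T,T]$, and $\mu_n^c$ plus the small atoms of $\mu_n$ converges weakly to $\mu^c$; uniformity of $F_n^c\circ\lambda_n \to F^c$ follows from monotonicity, continuity of $F^c$ and a Polya-type argument, using $\lambda_n \to \mathrm{id}$ uniformly on $[-T,T]$. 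Letting $\varepsilon \to 0$ completes the proof.

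The main obstacle is the handling of the ``small'' atoms of $\mu_n$: they need not be individually close to atoms of $\mu$, yet their aggregate squared mass must concentrate on the tail of $\mu^\ast$ rather than producing spurious jumps of the limit---and this is precisely the additional information carried by $\mu_n^\ast \Rightarrow \mu^\ast$ beyond plain weak convergence $\mu_n \Rightarrow \mu$.
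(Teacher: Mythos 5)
Your proposal is correct, but it follows a genuinely different route from the paper. The paper's proof is essentially a translation between two cited sequential characterizations: Proposition 3.6.5 of \cite{EK86} (Skorohod convergence expressed through sequences $t_n \to t$ and the values $F_n(t_n)$, $F_n(t_n-)$) and Lemma 2.5 of \cite{EKatomic} (weak atomic convergence expressed through the existence of a \emph{unique} sequence of atoms $(t_n,\mu_n(\{t_n\}))\to(t,\mu(\{t\}))$ for each atom $t$ of $\mu$, all other nearby atom masses vanishing); the matching of jumps to atoms is done sequence by sequence, with a Portmanteau estimate controlling the mass of $\mu_n$ near each atom. You instead work directly with the time-change formulation: in the direction ``Skorohod $\Rightarrow$ weak atomic'' you read off $\sup_s|\Delta(F_n\circ\lambda_n)(s)-\Delta F(s)|\le 2\sup_s|F_n(\lambda_n(s))-F(s)|\to 0$, which automatically encodes the non-coalescence of jumps (two large jumps of $F_n$ merging into one atom of $\mu$ would make $\mu_n^\ast$ converge to something strictly smaller than $\mu^\ast$, so this point must be covered --- your uniform jump-function convergence does cover it), and you control the aggregate of small jumps by the elementary bound $\sum_{\Delta F_n\le\delta}(\Delta F_n)^2\le\delta\,\mu_n(\mathbb R)$; in the converse direction you construct $\lambda_n$ explicitly by aligning the $K$ largest atoms (thereby essentially reproving the cited Lemma 2.5 rather than quoting it) and dispose of the remainder by a Polya-type argument, which only needs the \emph{limit} remainder to have jumps $\le j_{K+1}$. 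Your approach buys self-containedness and makes transparent exactly where the $\ell^2$ (atomic) information enters; the paper's buys brevity and avoids the one detail you should still verify, namely that your piecewise-linear $\lambda_n$ (with $K_n\to\infty$ slowly) satisfies the log-Lipschitz condition $\gamma(\lambda_n)\to 0$ demanded by the metric in Appendix \ref{sec.Skorohod}, not merely $\sup_t|\lambda_n(t)-t|\to 0$; this holds since the slopes $(x_{k+1}^n-x_k^n)/(x_{k+1}-x_k)$ tend to $1$ for each fixed $K$.
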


\begin{proof}
First observe that a classical result says that $\mu_n \Rightarrow \mu$ is equivalent to $F_n(t)  \rightarrow  F(t) $ 
and $\mu_n(E) \rightarrow \mu(E)$ for all continuity points $t$ of $F$. \\

``$\Rightarrow$'' If $\mu(\{t\}) > 0$ for some $t \in \mathbb R$, then, according to Lemma 2.5 in \cite{EKatomic}, there is an unique sequence $(t_n)_{n \in \mathbb N}$ 
with $(\mu(\{t_n\}),t_n) \rightarrow (\mu(\{t\}),t)$ and all other sequences $s_n \rightarrow t$ with $s_n \neq t_n$ satisfy $\mu(\{s_n\})) \rightarrow 0$. Moreover, a simple application of the Portmanteau
Theorem gives: For all $\eps > 0$ there is a $\bar \delta > 0$ such that $\mu_n((t-\delta,t_n)\cup (t_n,t+\delta)) < \eps $ for all $n$ large enough and $\mu((t-\delta,t)\cup (t,t+\delta)) < \eps$ for all $\delta < \bar \delta$. 
If we  now choose $\delta> 0$ in such a way that $t -\delta $ is a continuity point of $F$, then
\begin{equation}
\begin{split}
\lim_{n \rightarrow \infty}&|F_n(t_n) - F(t)|  = \lim_{n \rightarrow \infty}|\mu_n((-\infty,t_n]) - \mu((-\infty,t])| \\
&\le \lim_{n \rightarrow \infty}|F_n(t-\delta) - F(t-\delta)| + \lim_{n \rightarrow \infty}|\mu_n(\{t_n\}) - \mu(\{t\})|\\
&{}\hspace{4cm}  + \limsup_{n \rightarrow \infty}|\mu_n((t-\delta,t_n)) - \mu((t-\delta,t))|\\
&\le \eps 
\end{split}
\end{equation}
and hence $F_n(t_n) \rightarrow F(t)$. A similar argument shows that the conditions of Proposition 3.6.5 in \cite{EK86} are satisfied and therefore $F_n \rightarrow F$ in the Skorohod topology. \\

``$\Leftarrow$''  Now let $F_n \rightarrow F$ in the Skorohod topology. Then for all discontinuity points $t$ of $F$ there is  one sequence $(t_n)_{n \in \mathbb N}$  such that $F(t_n) \rightarrow F(t)$ and $F(t_n-) \rightarrow F(t-)$ (see (6.20) in the proof of Proposition 3.6.5 in \cite{EK86}).  Since $\mu(\{t\}) = F(t) - F(t-)$ this gives 
\begin{equation}
\lim_{n \rightarrow \infty} \mu_n(\{t_n\}) = \lim_{n \rightarrow \infty} (F_n(t_n)-F_n(t_n-)) = F(t) - F(t-) = \mu(\{t\}) > 0. 
\end{equation}
Moreover, all other sequences $(s_n)_{n \in \mathbb N}$ with $s_n < t_n$ and  $s_n \rightarrow t$ 
satisfy $|F_n(s_n) -F(t-)| \rightarrow 0$ and hence 
\begin{equation}
\lim_{n \rightarrow \infty} \mu_n(\{s_n\}) = \lim_{n \rightarrow \infty} (F_n(s_n)-F_n(s_n-)) = 0
\end{equation}
and the analogue holds for sequences $s_n > t_n$ and $s_n \rightarrow t$. Hence we can apply Lemma 2.5 in \cite{EKatomic} and get the result. 
\end{proof}

\section{Proof of Theorem \ref{thm_polish} (a), (b)}

(a) First of all observe that 
\begin{equation}
 \dGPA(\mfu,\mfu'):=\dGP(\mfu,\mfu') + \left|\nu^{2,\mfu}(\{0\}) - \nu^{2,\mfu'}(\{0\})\right| 
 + \rho_a(\nu^{2,\mfu},\nu^{2,\mfu'})
\end{equation}
is a metric on $\UM$, where $\rho_a$ is given in \cite{EKatomic}. Now, the properties follow analogue to 
Lemma 2.3 (combined with Lemma 2.5) in \cite{EKatomic} and Proposition 5.6 in \cite{GPW09}. \\
 
(b) Recall the notation in Remark \ref{rem_Phi} and note that for $\mfu:=[X,r,\mu] \in \UM$, $\mu_h$ is purely atomic, $h > 0$. Let $A_h$ be a finite subset of $ \{x \in X:\mu_h(\{x\})> 0\}$ 
with the property, that 
\begin{equation}
 \mu_h(X\setminus A_h) < h
\end{equation}
and let $\bar \mu_h(\cdot):=\mu_h(\cdot \cap A_h)$, then, by Lemma \ref{lem_restr}  
\begin{equation}
 [X,r,\bar \mu_h] \rightarrow \mfu.
\end{equation}
In addition, note that 
\begin{equation}
 \nu^{2,\mfu}(\{h'\}) = \nu^{2,\CutTwo_h(\mfu)}(\{h'\}) \qquad \text{for all } 0 < h < h' 
\end{equation}
and 
\begin{equation}
 \left|\nu^{2,[X,r,\mu_h]}(\{h'\})-\nu^{2,[X,r,\bar \mu_h]}(\{h'\})\right| \le 2 \mu_h(X)\cdot \mu_h(X \setminus A_h).
\end{equation}
This shows $[X,r,\bar \mu_h] \rightarrow \mfu$ in the Gromov-weak atomic topology (see again Section \ref{sec.weak.atomic} or \cite{EKatomic}). \par 
Let $n \in \mathbb N$ and $a \in \mathbb R_+^n$. Using an induction argument and the fact that $\bigcup_{k \in \mathbb N}\{\sum_{i \in I} a_i^k:\ I \subset \{1,\ldots,n\}\}$ 
is countable, where $a^k \in \mathbb R_+^n$, $k \in \mathbb N$, it is straight forward 
to see that one can approximate $a$ (pointwise) by a sequences $a^k$ with 
\begin{equation}
\forall I,J \subset \{1,\ldots,n\}, \ I\cap J = \emptyset: \ \sum_{i \in I} a_i^k \neq \sum_{j \in J} a_j^k. 
\end{equation}
When we now take $[X,r,\mu] \in \UM$ with $\mu = \sum_{i = 1}^n a_i \delta_{x_i}$ for $x_i \in X$, $i = 1,\ldots,n$, this shows, 
that the sequence of measures $\mu^k = \sum_{i = 1}^n a_i^k \delta_{x_i}$ satisfy $\mu^k \Rightarrow \mu$. 
Using a similar argument as above together with Lemma \ref{l.Ielements} below, finally gives the result 
(compare also Proposition 5.6 in \cite{GPW09}). \par 
Assume $A_h = \{x_1,\ldots,x_n\}$. Then, another another way of proving this result is to disturb the measure a bit, i.e. to add a realization of independent, positive, variables $U_1,\ldots,U_n$ with $\sum_i U_i = 1/n$ to $\mu_h(\{x_1\}),\ldots,\mu(\{x_n\})$ (compare Example \ref{ex.UMI}).  \\

In order to prove (c) of this theorem, we need some more results on the function $\F$. Therefore, we skip the proof at this point and 
refer to Section \ref{sec.p.polish.c}.

\begin{remark}
Note that the above argument can be modified to prove that $\F(\UMI)$ is dense in $\F(\UM)$. 
\end{remark}

\begin{lemma}\label{l.Ielements}
Let $\mfu = [\{x_1,\ldots,x_n\},r,\mu]\in \UM$, $n \in \mathbb N \cup \{\infty\}$ and $a_i = \mu(\{x_i\})$, $i \in \mathbb N$. Then $\mfu \in \UMI$ if and only if 
\begin{equation}
\sum_{i \in I} a_i \neq \sum_{i \in J} a_i, \qquad \forall \ I,J \subset \{1,\ldots,n\},\ I \neq J.
\end{equation}
\end{lemma}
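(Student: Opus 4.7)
The plan is to translate both directions of the biconditional into statements about subset sums of the atom masses $(a_i)$. The key observation is that the ultra-metric structure makes each closed ball $\bar B_h(x)$ coincide with an entire $h$-cluster of atoms, so its $\mu$-mass equals the atomic subset sum $\sum_{i:\, x_i \in \bar B_h(x)} a_i$. Both sides of the defining inequality for identifiability are therefore subset sums of $(a_i)$, and the whole lemma reduces to checking that the condition $(A^1_h)^h \neq (A^2_h)^h$ translates to the two atomic index sets being distinct.

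For the direction $(\Leftarrow)$, assume $(a_i)$ has pairwise distinct subset sums. Fix $h>0$ and admissible $A^1_h, A^2_h \subset \supp(\mu)$ with $(A^1_h)^h \neq (A^2_h)^h$. The separation condition on $A^l_h$ together with the ultra-metric property makes the balls $\{\bar B_h(x): x \in A^l_h\}$ pairwise disjoint, so $\sum_{x \in A^l_h} \mu(\bar B_h(x)) = \sum_{i \in I_l} a_i$ where $I_l := \{i:\, x_i \in (A^l_h)^h\}$. Since every $h$-ball around a support point has positive $\mu$-mass and hence contains at least one atom, the condition $(A^1_h)^h \neq (A^2_h)^h$ forces $I_1 \neq I_2$, and the hypothesis then yields distinct mass sums, confirming $\mfu \in \UMI$.

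For the direction $(\Rightarrow)$, I argue by contrapositive: suppose $\sum_I a_i = \sum_J a_j$ for some $I \neq J \subset \{1,\ldots,n\}$; after replacing $I,J$ by their symmetric differences I may assume $I \cap J = \emptyset$. When $n < \infty$, the finite minimum $h_0 := \min_{i \neq j} r(x_i, x_j)$ is positive, and for any $h \in (0, h_0)$ each $\bar B_h(x_i) = \{x_i\}$. Taking $A^1_h = \{x_i : i \in I\}$ and $A^2_h = \{x_j : j \in J\}$ yields admissible sets with $(A^1_h)^h = A^1_h \neq A^2_h = (A^2_h)^h$ and mass sums exactly $\sum_I a_i = \sum_J a_j$, directly violating identifiability.

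The main obstacle is the case $n = \infty$ with atoms accumulating, so that no uniform positive $h$ separates all pairs in $I \cup J$. In this situation I would pick $h>0$ and let $A^l_h$ consist of one representative per $h$-cluster meeting $I$ (respectively $J$); clusters meeting both $I$ and $J$ contribute equally to both mass sums, so the equality $\sum_I a_i = \sum_J a_j$ reduces to a balance between contributions from the "one-sided" clusters. Using that the $h$-cluster partition is piecewise constant in $h$ and refines to the atomic partition as $h \downarrow 0$, one can locate an $h>0$ at which the one-sided contributions coincide exactly, producing the desired witness. The delicate point is this last existence step, which rests on the hierarchical structure of the ultra-metric.
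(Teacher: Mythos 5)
Your reduction of identifiability to subset sums of the atom masses is exactly the observation the paper's one-line proof rests on, and your $(\Leftarrow)$ direction together with the finite-$n$ case of $(\Rightarrow)$ is correct and in fact more carefully argued than the paper's own proof (which only records the observation that every ball mass is a subset sum of the $a_i$).

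The last step for $n=\infty$, which you yourself flag as delicate, is a genuine gap, and the strategy you sketch cannot be repaired: there need not exist any $h>0$ at which the ``one-sided contributions coincide.'' The point is that for each fixed $h>0$ the only admissible witnesses are unions of closed $h$-balls, so identifiability at level $h$ is a statement about subset sums of the \emph{ball} masses $m_1^h,m_2^h,\ldots$ only; the index sets $I,J$ realizing the global equality $\sum_I a_i=\sum_J a_j$ may fail to be unions of $h$-balls for every $h>0$. Concretely, take the caterpillar space $X=\{x_1,x_2,\ldots\}$ with $r(x_k,x_l)=\max(1/k,1/l)$ for $k\neq l$ (completed by the accumulation point, which carries no mass). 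For $h$ with $m<1/h\le m+1$ the closed $h$-balls are the singletons $\{x_1\},\ldots,\{x_m\}$ together with one tail ball of mass $A_m=\sum_{l>m}a_l$, so identifiability at level $h$ is precisely the distinct-subset-sum property of the finite family $(a_1,\ldots,a_m,A_m)$. One can choose positive summable $(a_k)$ with $\sum_{k\ \mathrm{odd}}a_k=\sum_{k\ \mathrm{even}}a_k$ while all the families $(a_1,\ldots,a_m,A_m)$, $m\in\mathbb N$, have pairwise distinct subset sums: each level-$m$ coincidence is a nontrivial closed affine condition on $(a_k)$ whose defining functional is not proportional to $\sum_k(-1)^{k+1}a_k$ (its coefficients are eventually constant rather than alternating), so a Baire category argument inside the hyperplane $\{\sum_k(-1)^{k+1}a_k=0\}$ produces such a sequence. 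For that $\mfu$ the right-hand condition of the lemma fails with $I$ the odd and $J$ the even indices, yet $\mfu\in\UMI$. So the implication you are trying to prove in the fourth paragraph is false as stated for $n=\infty$; only the $(\Leftarrow)$ direction (the one the paper actually uses, e.g.\ in Proposition \ref{p.UMI.as}) and the finite-$n$ equivalence survive. Your proof should either restrict $(\Rightarrow)$ to finite $n$ or add a hypothesis forcing $I$ and $J$ to be separated at some positive scale.
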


\begin{proof}
This follows directly by definition, since for all $h\ge 0$ and $x\in\{x_1,\ldots,x_n\}$ there is a set $I$ such that 
$\mu(\bar B_h(x)) = \sum_{i \in I} \mu(\{x_i\})$.
\end{proof}

\section{Proofs for Section \ref{sec.decomp}}
Here we give the proofs of our results on the function $\F$. 

\subsection{Proof of Lemma \ref{lem_representatives}}\label{sec.P.Lemma.representatives}

For the first part we observe that since $(\supp (\mu),r)$ is separable there is a countable  set $J \subset \supp (\mu)$, such that
\begin{equation}\label{lem_representatives_eq1}
\supp(\mu) \subset \bigcup_{x \in J} \bar B(x,h).
\end{equation}

We define the set 
\begin{equation}
\mathcal I:= \big\{I \subset J:\ \mu\left(\bar B(x,h) \cap \bar B(y,h)\right) = 0,\ \forall x,y \in I,\ x \neq y\big\}.
\end{equation}

Note that $\subset$ defines a partial order on $\mathcal I$. If we take a totally ordered subset $\mathcal T\subset \mathcal I$, then 
$\bigcup_{A \in \mathcal T} A \in \mathcal I$ (for two different elements $x,y \in \bigcup_{A \in \mathcal T} A$, 
there is a set $A' \in \mathcal T$, since $\mathcal T$ is totally ordered, such that $x,y \in A'$) is an upper bound for 
$\mathcal T$. By Zorn's lemma, we can find a maximal set $I \in \mathcal I$. \\

It remains to proof that 
\begin{equation}
 \mu(X) = \mu(\supp(\mu))= \sum_{x \in I} \mu(\bar B(x,h)). 
\end{equation}

Note that for $x,y \in \supp(\mu)$, since $r$ is  an ultra-metric $\mu$ almost surely, we either have
\begin{equation}
\mu\big(\bar B(x,h) \cap \bar B(y,h)\big) = 0,
\end{equation}

or 
\begin{equation}
 \mu\big(\bar B(x,h) \cap \bar B(y,h)\big) = \tilde \mu\big(\bar B(y,h)\big).
\end{equation}

By (\ref{lem_representatives_eq1}), we have 
\begin{equation}
 \mu(X) = \mu\left(\bigcup_{x \in J} \bar B(x,h)\right).
\end{equation}

If we would assume that $ \mu(X) > \sum_{x \in I}  \mu(\bar B(x,h))$, then, since $I \subset J$, we would find a 
$\tilde x \in J$ such that 
\begin{equation}
 \mu\big(\bar B(\tilde x,h) \cap \bar B(x,h)\big) = 0,\qquad \forall x \in I. 
\end{equation}

This is a contradiction, since $I$ is a maximal element of $\mathcal I$. \\

\noindent For the second part we set 
\begin{equation}\label{eq_I_def}
I_i:= \left\{j \in \{1,\ldots,\num(\delta)\}:\ \mu(\bar B(\tau_j^\delta,\delta) \cap \bar B(\tau_i^h,h))>0\right\}.
\end{equation}
Since $r$ is  an ultra-metric $\mu$-almost surely, we get $\mu(\bar B(\tau_j^\delta,\delta)\cap \bar B(\tau_i^h,h)) = \mu(\bar B(\tau_j^\delta,\delta))$ 
for all $j \in I_i$. This together with the first part implies $\ge$.\par

Let $A:= \bar B(\tau_i^h,h) \backslash \bigcup_{j \in I_i}\bar  B(\tau_j^\delta,\delta) $. If we assume that $\mu(A) > 0$, then we can take  
a $x \in A \cap \supp(\mu)$  and, by Remark \ref{rem_ind}, we find a $j$ such that
$\mu(\bar B(x,\delta) \cap\bar  B(\tau_j^\delta,\delta)) = \mu(\bar B(x,\delta))$. It follows that 
$\mu(\bar B(\tau_i^h,h) \cap \bar B(\tau_j^\delta,\delta)) = \mu(\bar B(\tau_j^\delta,\delta)) > 0$ and hence $j \in I_i$. 
A contradiction and therefore $\mu(A) = 0$. To see that $\{I_i\}_{i = 1,\ldots,\num(h)}$ forms a partition follows by similar arguments.

\subsection{A result on relative compactness and proof of Lemma \ref{l.F.cadlag} and Proposition \ref{p.tightness.GW}} \label{sec.relative.compactness.F}

We have the following result on relative compactness:

\begin{proposition}(Relative compactness and further properties)\label{thm_basic_tools}
Let $(\mfu_n)_{n \in \N}$ be a sequence in $\UM$ and $\mfu \in \UM$.
\begin{itemize}
\item[(i)] If $\mfu_n \rightarrow \mfu$ in the Gromov-weak topology and $(\h_n)_{n \in \mathbb N}$ is a sequence in $(0,\infty)$
with $\h_n \rightarrow \h \in (0,\infty)$, then $\{\f(\mfu_n,\h_n):\ n \in \mathbb N\}$ is relatively compact in 
$\mathcal S^\downarrow$, equipped with $\dSeq$.
\item[(ii)] $\F(\UM) \subset D((0,\infty),\mathcal S^\downarrow)$ and 
\begin{equation}
\lim_{h \downarrow 0} \max_i |\f(\mfu,h)_i-\f(\mfu,0)_i| = \lim_{h \downarrow 0} \dMax(\f(\mfu,h),\f(\mfu,0))= 0,
\end{equation} 
where $\f([X,r,\mu],0) := (\mu(\{x\})_{x \in X})^{\downarrow}$, i.e. the decreasing rearrangement of the atoms of $\mu$. 
\item[(iii)] If $\dSeq(\f(\mfu_n,h),\f(\mfu,h)) \rightarrow 0$ for all continuity points $h$ of $\f(\mfu,\cdot)$, then 
  $\{\mfu_n:\ n \in \N\}$ is relatively compact with respect to the Gromov-weak topology. 
\end{itemize}
\end{proposition}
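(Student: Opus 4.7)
My plan is to handle the three parts in the order (ii), (i), (iii), since (ii) is structural and feeds into the compactness arguments. For (ii), the cadlag property of $h \mapsto \f(\mfu,h)$ is inherited from the cadlag property of $h \mapsto \CutTwo_h(\mfu)$ established in Lemma \ref{lem_restr}(iv), because $\f(\mfu,h)$ is, up to reordering, the sequence of atom masses of the purely atomic measure underlying $\CutTwo_h(\mfu)$. For $h \downarrow h_0$, the partition $P_h$ is a coarsening of $P_{h_0}$ and refines back to it: each ball in $P_h$ is a union of $P_{h_0}$-balls plus a set whose mass vanishes by continuity of $\mu$ from above. Pointwise convergence of ball masses together with conservation of total mass $\overline{\mfu}$ then gives $\dSeq$-convergence via a Scheffé-type argument. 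Left limits are treated identically using open balls (Remark \ref{rem_pos}(ii)). For the right limit at $0$, I decompose $\mu$ into atomic and non-atomic parts: each atom $x$ satisfies $\mu(\bar B(x,h)) \downarrow \mu(\{x\})$ as $h \downarrow 0$, and all remaining entries of $\f(\mfu,h)$ become uniformly small in $\dMax$ (this is why one only gets $\dMax$, and not $\dSeq$, convergence at $0$).

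For (i), the uniform bound $\sup_n \sum_i \f(\mfu_n,h_n)_i = \sup_n \overline{\mfu_n} < \infty$ follows from Remark \ref{r.totalMassCont}, and a diagonal extraction yields pointwise convergence along a subsequence to some $c \in \mathcal{S}^\downarrow$. The delicate step is equi-summability of the tails (Remark \ref{rem.compact.S}): for every $\eps > 0$ I must find $M$ with $\sup_n \sum_{i > M} \f(\mfu_n,h_n)_i \le \eps$. I would derive this from the mass concentration implicit in Gromov-weak convergence: realizing the Gromov-Prohorov convergence $\mfu_n \to \mfu$ in a common complete separable metric space via Proposition \ref{prop_M_polish}(b), Prohorov tightness of the embedded image measures produces a compact set $K$ carrying mass $\ge \overline{\mfu_n} - \eps$ uniformly. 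Covering $K$ by finitely many balls of radius less than $(1/3)\liminf h_n$ and pulling back via the isometric embeddings, these balls correspond to finitely many balls of radius $h_n$ in $\mfu_n$ containing mass $\ge \overline{\mfu_n}-\eps$, which bounds the $\ell^1$-tail of the sorted family sizes.

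For (iii), the direction is reversed: $\dSeq$-convergence of $\f$ at continuity points must be upgraded to Gromov-weak relative compactness. The total-mass bound $\sup_n \overline{\mfu_n} < \infty$ comes for free from $\sum_i \f(\mfu_n,h)_i \to \sum_i \f(\mfu,h)_i$ at any continuity point $h$. For mass concentration, since $\f(\mfu,\cdot)$ is cadlag by (ii), its continuity points are dense, so for every $\eps > 0$ I can pick a continuity point $h > 0$ and $N \in \N$ with $\sum_{i > N} \f(\mfu,h)_i < \eps/2$; $\dSeq$-convergence then forces $\sum_{i > N} \f(\mfu_n,h)_i < \eps$ eventually, meaning the mass of $\mu_n$ is concentrated in $N$ balls of radius $h$. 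Combined with the total-mass bound and a standard criterion for Gromov-weak relative compactness (e.g.\ Lemma 5.8 in \cite{GPW09}), this yields relative compactness of $\{\mfu_n\}$.

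The main obstacle in both (i) and (iii) is the conversion between the $\ell^1$-tail control of the sorted family-size vectors at a single scale $h$ and the mass-concentration criterion characterizing Gromov-weak relative compactness; part (ii), by contrast, follows rather directly from Lemma \ref{lem_restr}(iv) together with elementary measure-theoretic continuity arguments.
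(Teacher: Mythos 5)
Your parts (i) and (ii) are essentially sound. In (i) you replace the paper's argument (a contradiction via the modulus of mass distribution $\nu_{C/M}(\mfu_n)$, quoting Proposition 7.1 in \cite{GPW09}) by an explicit common embedding plus a covering of a tight compact set by balls of radius $<h_n/2$; this is a legitimate unpacking of the same fact and works because distinct closed $h_n$-balls in an ultra-metric space are disjoint, so the $M$ covering balls meet at most $M$ partition classes. In (ii) your opening claim that the cadlag property of $\f(\mfu,\cdot)$ is ``inherited'' from the cadlag property of $h\mapsto\CutTwo_h(\mfu)$ in Lemma \ref{lem_restr}(iv) is not a valid deduction: reading off the sorted atom masses is precisely \emph{not} continuous for the Gromov-weak topology (this is the raison d'\^etre of the Gromov-weak atomic topology). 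Your subsequent direct argument (monotone convergence of ball masses plus an $\ell^1$-Scheff\'e step using conservation of $\overline{\mfu}$, open balls for left limits, and the atomic/non-atomic split at $0$) is, however, the paper's argument in substance, so this is a framing issue rather than a gap.

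Part (iii) has a genuine gap. The Gromov-weak relative-compactness criterion (Theorem~2 / Proposition~7.1 in \cite{GPW09}, not Lemma~5.8, which only provides a common embedding for an already convergent sequence) requires \emph{two} conditions besides the total-mass bound: the modulus-of-mass-distribution condition, which you verify, \emph{and} tightness of the distance matrix distributions $\{\nu^{2,\mfu_n}\}$, which you never address. The latter is not implied by what you prove: the spaces $\mfu_n$ consisting of two atoms of mass $1/2$ at mutual distance $n$ have constant total mass, perfect mass concentration at every scale, and are not relatively compact, because $\nu^{2,\mfu_n}=\tfrac12\delta_0+\tfrac12\delta_n$ loses mass to infinity. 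In the setting of (iii) tightness does follow from the hypothesis, but only via the identity $\nu^{2,\mfu}([0,h])=\sum_i\f(\mfu,h)_i^2$ (Lemma \ref{l.connect.f.nu}): $\dSeq$-convergence at continuity points $h$ gives $\nu^{2,\mfu_n}([0,h])\to\nu^{2,\mfu}([0,h])$, and convergence of the total masses then yields $\nu^{2,\mfu_n}((h,\infty))\to\nu^{2,\mfu}((h,\infty))$, which is small for $h$ large since $\mfu\in\UM$. This is exactly the step the paper carries out first, and without it your invocation of the compactness criterion does not go through.
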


\begin{remark}
Note that (ii) is Lemma \ref{l.F.cadlag}. 
\end{remark}

Before we start we need the following result on monotonicity, which is a direct consequence of Lemma \ref{lem_representatives}:

\begin{lemma}\label{lem_monoton}

Let  $0< \delta \le h$, $\mfu \in \UM$ and assume we are in the situation of Lemma \ref{lem_representatives}. Then $\num(h) \le \num(\delta)$. Moreover, for $M \le \num(h)$: 
\begin{equation}
\sum_{i=1}^M \f(\mfu,h)_i \ge \sum_{i=1}^M \f(\mfu,\delta)_i.
\end{equation}
\end{lemma}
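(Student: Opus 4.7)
The plan is to prove both claims by exploiting the partition $\{I_i\}_{i=1,\ldots,\num(h)}$ of $\{1,\ldots,\num(\delta)\}$ given by the second part of Lemma \ref{lem_representatives}, which satisfies
\begin{equation}
\mu(\bar B(\rep_i^h,h)) \;=\; \sum_{j \in I_i}\mu(\bar B(\rep_j^\delta,\delta)), \qquad i = 1,\ldots,\num(h).
\end{equation}
By Remark \ref{rem_pos}(i) each summand $\mu(\bar B(\rep_j^\delta,\delta))$ is strictly positive, so each $I_i$ is nonempty. Since the $I_i$'s partition $\{1,\ldots,\num(\delta)\}$, this immediately gives $\num(h) \le \num(\delta)$.

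For the second assertion I will use the characterization of the partial sums of a decreasing rearrangement as a maximum over subsets of prescribed cardinality: writing $a'_i := \mu(\bar B(\rep_i^h,h))$ and $b'_j := \mu(\bar B(\rep_j^\delta,\delta))$, one has
\begin{equation}
\sum_{i=1}^M \f(\mfu,h)_i \;=\; \max_{\substack{S\subset\{1,\ldots,\num(h)\}\\ |S|=M}} \sum_{i\in S} a'_i,
\qquad
\sum_{j=1}^M \f(\mfu,\delta)_j \;=\; \max_{\substack{T\subset\{1,\ldots,\num(\delta)\}\\ |T|=M}} \sum_{j\in T} b'_j.
\end{equation}
Let $J$ be the set of indices realizing the maximum on the right and set $S_0 := \{i : I_i \cap J \ne \emptyset\}$, so $|S_0|\le M$. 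Then
\begin{equation}
\sum_{j\in J} b'_j \;=\; \sum_{i\in S_0}\sum_{j\in I_i\cap J} b'_j \;\le\; \sum_{i\in S_0}\sum_{j\in I_i} b'_j \;=\; \sum_{i\in S_0} a'_i.
\end{equation}
Because $M\le \num(h)$ and the $a'_i$ are non-negative, we may enlarge $S_0$ to any set $S$ with $|S|=M$ without decreasing the right-hand side, giving $\sum_{j=1}^M \f(\mfu,\delta)_j = \sum_{j\in J} b'_j \le \sum_{i\in S} a'_i \le \sum_{i=1}^M \f(\mfu,h)_i$, which is the desired inequality.

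I do not expect any real obstacle here: the result is essentially a bookkeeping statement about decreasing rearrangements of grouped sums, and the only ingredient beyond elementary manipulation is Lemma \ref{lem_representatives}. The mild care needed is just to extend $S_0$ to exactly $M$ elements (which is where the hypothesis $M\le\num(h)$ is used) so that one can compare the two partial sums at the same cardinality.
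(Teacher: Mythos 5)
Your proof is correct and is precisely the argument the paper intends: the paper gives no written proof, merely calling the lemma ``a direct consequence of Lemma \ref{lem_representatives}'', and your max-over-subsets-of-cardinality-$M$ bookkeeping with the partition $\{I_i\}$ supplies exactly the missing details. One small phrasing fix: the nonemptiness of each $I_i$ follows from $\mu(\bar B(\rep_i^h,h))>0$ (Remark \ref{rem_pos}(i) applied at level $h$) forcing $\sum_{j\in I_i}\mu(\bar B(\rep_j^\delta,\delta))>0$, not from the positivity of the individual summands, since an empty index set would still give a zero sum.
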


We are now able to prove Proposition \ref{thm_basic_tools} (i).

\begin{proof} {\bf Proposition \ref{thm_basic_tools} - (i)} 
Note that if we equip $\mathcal S^\downarrow_C$, $C > 0$ with 
\begin{equation}\label{eq.dMax.def}
\dMax(x,y):= \max\{|x_i-y_i|:\ i \in \N\},\qquad x,y \in \mathcal S^\downarrow_{C},
\end{equation}

then $(\mathcal S^\downarrow_C,\dMax)$ is a compact space (this follows analogue to Proposition 2.1. in \cite{B}). \par 
Note that $\mfu_n \rightarrow \mfu$ implies $\overline{\mfu_n} \rightarrow \overline{\mfu}$, where $\overline{\mfu} = \sqrt{\nu^{2,\mfu}([0,\infty))}$ and hence we find a constant $C > 0$ 
such that 
\begin{equation}
\sup_{n \in \N} \overline{\mfu_n} = \sup_{n \in \N} \sum_{i \in \N} \f(\mfu_n,h_n)_i \le C.
\end{equation}

It follows that $\{\f(\mfu_n,h_n):\ n \in \N\}$ is relatively compact in $(\mathcal S^\downarrow_{C},\dMax)$. Hence, there is a $x \in \mathcal S^\downarrow_{C}$ such that $\dMax(\f(\mfu_{n_k},h_{n_k}),x) \rightarrow 0$ along some subsequence  and we have to show that 
$\dSeq(\f(\mfu_{n_k},h_{n_k}),x) \rightarrow 0$. We suppress the dependence on the subsequence and set
\begin{equation}
\delta := \inf_{n \in \N} h_n > 0.
\end{equation}

Next we prove that for all $0 < \e \le \delta$ there is a $M \in \mathbb N$ such that  
\begin{equation}\label{eq_uni}
\sup_{n \in \mathbb N}\sum_{i = M+1}^\infty \f(\mfu_n,h_n)_i < \e.
\end{equation}

We assume the converse, i.e. assume there is an $\e >0 $ with $\e \le \delta$ such that for all $M \in \mathbb N$ there is a $n \in \mathbb N$ with 
\begin{equation}\label{eq_AAA1}
\sum_{i=M+1}^{\infty} \f(\mfu_n,h_n)_i \ge \e.
\end{equation}
Note that $\f(\mfu_n,\bar \e)_i \le \frac{C}{M}$ for all 
$i \ge M$, $M \in \N$, $\bar \e > 0$. Moreover, note that when $\bar \e \le \delta$, Lemma \ref{lem_monoton}  implies
\begin{equation}
\sum_{i = M+1}^\infty \f(\mfu_n,\bar \e)_i \ge \e.
\end{equation}
Since $[X_n,r_n,\mu_n] = \mfu_n \rightarrow \mfu$, we have (see Proposition 7.1 
in \cite{GPW09} and Proposition B.2 in \cite{infdiv}):
\begin{equation}
\begin{split}
0 &= \lim_{M \rightarrow \infty} \sup_{n \in \N} \nu_{\frac{C}{M}}(\mfu_n) \\
&= \lim_{M \rightarrow \infty} \sup_{n \in \N} \ \inf \left\{\bar \e > 0:\ \mu_n\left(\left\{x\in X_n: \mu_n(B^{r_n}(x,\bar \e)) 
\le \frac{C}{M}\right\} \right)\le \bar \e\right\}\\
&\ge \lim_{M \rightarrow \infty} \sup_{n \in \N} \ \inf \left\{\bar \e > 0:\ \mu_n\left(\left\{x\in X_n: \mu_n(\bar B^{r_n}(x,\bar \e)) 
\le \frac{C}{M}\right\} \right)\le \bar \e\right\}\\
&=  \lim_{M \rightarrow \infty} \sup_{n \in \N} \ \inf \left\{\bar \e > 0:\ \sum_{i = 1}^\infty \f(\mfu_n,\bar \e)_i 1\left(\f(\mfu_n,\bar \e)_i \le \frac{C}{M}\right)  \le \bar \e \right\} \\
&\ge  \lim_{M \rightarrow \infty} \sup_{n \in \N} \ \inf \left\{\bar \e > 0:\ \sum_{i = M+1}^\infty \f(\mfu_n,\bar \e)_i  
\le \bar \e \right\} \\
&\ge \e,
\end{split}
\end{equation}
a contradiction and (\ref{eq_uni}) follows. \\

If we now define for $\e >0$
\begin{equation}
M^x:= \min\Big\{K:\ \sum_{i=K+1}^\infty x_i < \e \Big\},
\end{equation}
then (\ref{eq_uni})  implies for all $0<\e \le \delta$, there is a $M \in \N$ such that 

\begin{equation}
\sum_{i=1}^{\infty}\big|\f(\mfu_n,h_n)_i - x_i \big| \le (M \vee M^x ) \cdot \max_{i \in \N} \big|\f(\mfu_n,h_n)_i - x_i\big| + 
2\e,\qquad 	\forall n \in \mathbb N.
\end{equation}

Therefore
\begin{equation}
\sum_{i=1}^{\infty}\big|\f(\mfu_n,\h_n)_i - x_i \big|\stackrel{n \rightarrow \infty}{\longrightarrow} 0.
\end{equation}
\end{proof}

\begin{remark}\label{rem_equinorms}
The above proof also shows: If $x\in \mathcal S^\downarrow_C$,  $\mfu_n \rightarrow \mfu$ in the Gromov-weak topology and $0<h_n \rightarrow h > 0$, then the following is equivalent: 
\begin{itemize}
\item[(i)] $\dSeq\big(\f(\mfu_n,h_n),x\big) \rightarrow 0$,
\item[(ii)] $\dMax\big(\f(\mfu_n,h_n),x\big) \rightarrow 0$.
\end{itemize}
Moreover, by Proposition 2.1. in \cite{B}, the above is equivalent to
\begin{itemize}
\item[(iii)] $\f(\mfu_n,h_n)_i \rightarrow x_i $ for all $i \in \mathbb N$.
\end{itemize}
\end{remark}

Next we show that $\F$ takes values in the space of cadlag functions. Before we do that we need the following Lemma:

\begin{lemma}\label{l.Gwa.embedding}
Let $\mfu = [X,r,\mu], \mfu_1 = [X_1,r_1,\mu_1],\mfu_2 = [X_2,r_2,\mu_2],\ldots \in \UM$ and assume that $\mfu_n \rightarrow \mfu$ in the Gromov-weak topology 
and $\nu^{2,\mfu_n}(\{0\}) \rightarrow \nu^{2,\mfu}(\{0\}) $. Then there is a complete separable metric space $(Z,r_Z)$ and isometric embeddings $\varphi_n:X_n\to Z$, $n \in \mathbb N$ and $\varphi:X \to Z$ such that 
$\mu_n \circ \varphi^{-1}_n \rightarrow \mu \circ \varphi^{-1}$ in the weak atomic topology.  
\end{lemma}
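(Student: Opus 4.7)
My plan is to reduce the claim to two pieces: the standard Gromov-weak embedding statement, and the Ethier--Kurtz criterion for weak atomic convergence in terms of the total atomic mass.

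First, I would invoke the definition of the Gromov-Prohorov metric from Proposition \ref{prop_M_polish} together with an amalgamation argument (compare Lemma 5.8 in \cite{GPW09}) to produce a \emph{single} complete separable metric space $(Z,r_Z)$ and isometric embeddings $\varphi: \supp(\mu) \to Z$, $\varphi_n: \supp(\mu_n) \to Z$ such that $\mu_n \circ \varphi_n^{-1} \Rightarrow \mu \circ \varphi^{-1}$ weakly in $\mathcal M_f(Z)$. The idea is that $\dGP(\mfu_n,\mfu) \to 0$ gives, for each $n$, some Polish space $Z_n$ witnessing the Prohorov closeness; one then amalgamates all these $Z_n$'s into a common Polish space (using the universality of Urysohn's space, say) and fixes a single embedding $\varphi$ for $\mfu$ while redefining the $\varphi_n$ accordingly. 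The embeddings can be extended off $\supp(\mu_n)$ and $\supp(\mu)$ arbitrarily since only the push-forwards matter.

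Second, since $r$ is a metric on $\supp(\mu)$ (and likewise on $\supp(\mu_n)$), the set $\{(x,y) : r(x,y)=0\}$ is $\mu^{\otimes 2}$-a.s. the diagonal, so
\[
\nu^{2,\mfu}(\{0\}) \;=\; \sum_{x \in X} \mu(\{x\})^2 \;=\; (\mu\circ\varphi^{-1})^{\ast}(Z),
\]
using that isometric embeddings preserve atoms and their masses; analogously for $\mfu_n$. The hypothesis thus reads $(\mu_n \circ \varphi_n^{-1})^{\ast}(Z) \to (\mu \circ \varphi^{-1})^{\ast}(Z)$, i.e. the total atomic masses of the push-forwards converge.

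Finally, I would invoke the characterization of weak atomic convergence from \cite{EKatomic} (essentially Lemma~2.7 there, which says that for finite Borel measures on a Polish space, weak convergence together with convergence of the total atomic mass $\mu_n^{\ast}(Z) \to \mu^{\ast}(Z)$ implies $\mu_n^{\ast} \Rightarrow \mu^{\ast}$, hence $\mu_n \to \mu$ in the weak atomic topology). Combined with the weak convergence from the first step, this gives $\mu_n \circ \varphi_n^{-1} \to \mu \circ \varphi^{-1}$ in the weak atomic topology, which is the claim. The main obstacle is Step~1, namely obtaining a \emph{common} target space $Z$ with a \emph{fixed} embedding $\varphi$ for the limit object, rather than a separate witness for each $n$; this is the routine but slightly delicate part of working with the Gromov-Prohorov metric, and once it is done, the rest is a direct translation to Ethier--Kurtz's criterion.
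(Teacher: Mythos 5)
Your proposal matches the paper's proof: the paper likewise identifies $\nu^{2,\mfu}(\{0\})=\sum_{x}\mu(\{x\})^2$ as the total atomic mass, obtains the common embedding via Lemma 5.8 of \cite{GPW09}, and then concludes weak atomic convergence from weak convergence plus convergence of total atomic mass using the Ethier--Kurtz characterization (cited there as Lemmas 2.1 and 2.2 of \cite{EKatomic}). The argument is correct and essentially identical.
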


\begin{proof}
Observe that 
\begin{equation}
\mu^\ast_n := \sum_{x \in X_n} \mu_n(\{x\})^2 = \nu^{2,\mfu_n}(\{0\}) \rightarrow \nu^{2,\mfu}(\{0\}) = \sum_{x \in X} \mu(\{x\})^2.
\end{equation}
When we now apply Lemma 5.8 in \cite{GPW09} combined with Lemma 2.1 and 2.2 in \cite{EKatomic}, the result follows. 
\end{proof}

\begin{proof} {\bf Proposition \ref{thm_basic_tools} - (ii)} 
Let $h \in (0,\infty)$ and $(h_n)_{n \in \N}$ be a sequence in $(0,\infty)$ with $h_n \downarrow h$ and $\mfu = [X,r,\mu]$. Let $\{\rep_i^h: i \in \{1,\ldots,n(h)\}$ be as in Lemma \ref{lem_representatives}. Then we have 
\begin{equation}
\bar B(\rep_i^h,h) = \bigcap_{n \in \mathbb N} \bar B(\rep_i^h,h_n),\qquad \forall i = 1,\ldots,n(h) 
\end{equation}
and the $\sigma$-continuity of the measure $\mu$ gives $\f(\mfu,h_n)_i \rightarrow \f(\mfu,h)_i$ for all $i \in \{1,\ldots,n(h)\}$. 
By Remark \ref{rem_equinorms}, this is enough to get the right continuity. \\

In order to prove that the limits from the left exist, assume $h_n \uparrow h$. Note that by Proposition \ref{thm_basic_tools} (i), 
$\{\f(\mfu,h_n):\ n \in \N\}$ is relatively compact in $(\mathcal S^\downarrow,\dSeq)$. Let $\tilde x \in \mathcal S^\downarrow$ be a limit point along some subsequence $(n_k)_{k \in \N}$, $\e >0$ and $M \in \{1,\ldots,n(h)\}$. By Lemma \ref{lem_monoton} we have for $m$ large enough
\begin{equation}
\sum_{i = 1}^M \f(\mfu,h)_i\ge \sum_{i = 1}^M \f(\mfu,h_n)_i \ge \sum_{i = 1}^M \f(\mfu,h_m)_i,\qquad \forall n \ge m.
\end{equation}
Hence $0 \le \sum_{i = 1}^M \f(\mfu,h_n)_i$ is a monotonically increasing sequence and therefore it converges to some $S_M \ge 0$. 
It follows that $\f(\mfu,h_n)_1 \rightarrow S_1$ and for $M > 1$
\begin{equation}
\f(\mfu,h_n)_M = \sum_{i = 1}^{M} \f(\mfu,h_n)_i - \sum_{i = 1}^{M-1} \f(\mfu,h_n)_i  \stackrel{n \rightarrow \infty}{\longrightarrow} S_{M} - S_{M-1}, 
\end{equation}
i.e. $\tilde x_i = S_{i} - S_{i-1}$ ($S_0 := 0$) for all $i \in \{1,\ldots,n(h)\}$ independent of the subsequence and the existence of the left limit follows. \\ 

For the second part, set  $\mfu_n := \Cut_{h_n}(\mfu) = [X_n,r_n,\mu_n]$, for $h_n \downarrow 0$. Then, by Lemma \ref{lem_restr}, $\mfu_n \rightarrow \mfu$, in the Gromov weak topology. Moreover, by the definition of $\mu_n$ (see Remark \ref{rem_Phi}), we have
\begin{equation}
\nu^{2,\mfu_n}(\{0\}) = \nu^{2,\mfu}([0,t_n]) \rightarrow \nu^{2,\mfu}(\{0\}). 
\end{equation} 
We can now apply Lemma 2.5 in \cite{EKatomic} combined with Lemma \ref{l.Gwa.embedding} to get the result.
\end{proof}

\begin{remark}\label{r.weak.atomic.approx}
 Let $\mu_h$ be the measure given in Remark  \ref{rem_Phi}. As we have seen in the above proof, $\mu_h \rightarrow \mu$ in the weak atomic topology when $h \downarrow 0$ (on $(X,r)$). In particular, by Definition \ref{d.weak.atomic}, 
 \begin{equation}
  \sum_{i \in \mathbb N} f(\mfu,h)_i^2 \rightarrow  \sum_{i \in \mathbb N} f(\mfu,0)_i^2,\qquad \text{for } h \downarrow 0. 
 \end{equation}
 Note also that 
 \begin{equation}
  \nu^{2,\mfu}(\{0\}) = \sum_{i = 1}^\infty \f(\mfu,0)_i^2.
  \end{equation}

\end{remark}

Before we prove Proposition \ref{thm_basic_tools}  (iii), we need the following connection of $\F(\mfu)$ for a given $\mfu \in \UM$ and the pairwise distance matrix distribution $\nu^{2,\mfu}$: 

\begin{lemma}\label{l.connect.f.nu}
	Let $\mfu = [X,r,\mu] \in \UM$, $\h > 0$.  Then
	\begin{equation}
		\nu^{2,\mfu}[0,\h] = \sum_{i \in \mathbb N} (\f(\mfu, \h)_i)^2 
	\end{equation}
	and $\nu^{2,\mfu}\{\h\} = 0$ if and only if  $ \dSeq(\f(\mfu,\h-),\f(\mfu,\h)) = 0$, i.e. the continuity points of $\nu^{2,\mfu}$ 
	are exactly the ones of $\f(\mfu,\cdot)$. 
\end{lemma}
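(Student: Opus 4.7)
The plan is to split the proof into two parts. For the first identity, I would apply Fubini to write
\begin{equation}
\nu^{2,\mfu}[0,\h] \;=\; \mu^{\otimes 2}\bigl(\{(x,y):r(x,y)\le \h\}\bigr) \;=\; \int_X \mu\bigl(\bar B(x,\h)\bigr)\,d\mu(x).
\end{equation}
Using the ultra-metric property ($\mu$-a.s.) together with Lemma \ref{lem_representatives}, for $\mu$-a.e.\ $x$ there is a unique $i\in\{1,\ldots,\num(\h)\}$ with $x\in \bar B(\rep_i^\h,\h)$, and for such $x$ one has $\bar B(x,\h) = \bar B(\rep_i^\h,\h)$; hence $\mu(\bar B(x,\h)) = \mu(\bar B(\rep_i^\h,\h))$. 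Splitting the integral over the essentially disjoint balls gives $\nu^{2,\mfu}[0,\h] = \sum_i \mu(\bar B(\rep_i^\h,\h))^2 = \sum_{i\in\mathbb N}\f(\mfu,\h)_i^2$ (the sum being invariant under reordering).

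For the second claim, I would first obtain the analogous formula for $\nu^{2,\mfu}[0,\h)$. By $\sigma$-continuity, $\nu^{2,\mfu}[0,\h) = \lim_{h'\uparrow \h} \sum_j \f(\mfu,h')_j^2$. By Proposition \ref{thm_basic_tools}(ii), $\f(\mfu,h')\to \f(\mfu,\h-)$ in $\dSeq$, and since every entry is bounded by $\bar{\mfu}$, the estimate $|x^2-y^2|\le 2\bar{\mfu}|x-y|$ shows $\sum_j \f(\mfu,h')_j^2 \to \sum_j \f(\mfu,\h-)_j^2$. Combining with Step 1, $\nu^{2,\mfu}\{\h\} = \sum_i \f(\mfu,\h)_i^2 - \sum_j \f(\mfu,\h-)_j^2$. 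The implication ``$\f(\mfu,\h-)=\f(\mfu,\h)\Rightarrow \nu^{2,\mfu}\{\h\}=0$'' is immediate from this.

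For the converse, assume $\nu^{2,\mfu}\{\h\}=0$. Writing $M_i:=\mu(\bar B(\rep_i^\h,\h))$ and using Lemma \ref{lem_representatives} to get, for each $h'<\h$, a partition $\{I_i(h')\}$ with $M_i = \sum_{j\in I_i(h')}\mu(\bar B(\rep_j^{h'},h'))$, the inequality $(\sum a_j)^2\ge \sum a_j^2$ for nonnegative $a_j$ gives
\begin{equation}
0 \;\le\; M_i^2 - \sum_{j\in I_i(h')} \mu(\bar B(\rep_j^{h'},h'))^2,
\end{equation}
and this expression is monotone non-increasing as $h'\uparrow \h$ (finer partitions have smaller sum of squares). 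Summing over $i$ yields $\sum_i M_i^2 - \sum_j\f(\mfu,h')_j^2$, which tends to $\nu^{2,\mfu}\{\h\}=0$; by monotonicity each individual $i$-th term tends to zero, so $\sum_{j\in I_i(h')}\mu(\bar B(\rep_j^{h'},h'))^2\to M_i^2$. Since $\sum a_j^2 \le (\max_j a_j)\sum a_j = m_i^{h'}\cdot M_i$, I obtain $m_i^{h'}:=\max_{j\in I_i(h')}\mu(\bar B(\rep_j^{h'},h'))\to M_i$.

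Finally, I would conclude $\f(\mfu,h')\to\f(\mfu,\h)$ in $\dSeq$ by a sort-contraction argument. Let $a^{h'}$ be the multiset of all $h'$-ball masses and $b$ the multiset with entry $M_i$ in the slot of the maximum sub-piece of ball $i$ and $0$ in the remaining slots; then $\f(\mfu,h') = \sort(a^{h'})$ and $\f(\mfu,\h) = \sort(b)$, and one checks
\begin{equation}
\|a^{h'}-b\|_1 \;=\; 2\sum_i (M_i - m_i^{h'}) \;\longrightarrow\; 0
\end{equation}
by dominated convergence (dominated by the summable sequence $(M_i)$). Since the decreasing rearrangement is $\ell^1$-non-expansive, this forces $\f(\mfu,h')\to\f(\mfu,\h)$ in $\dSeq$, hence $\f(\mfu,\h-)=\f(\mfu,\h)$. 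The main obstacle is precisely this last step: passing from the pointwise convergence $m_i^{h'}\to M_i$ (one index at a time) to uniform $\ell^1$ convergence of the sorted sequences when $\num(\h)$ may be infinite. Dominated convergence via the summable envelope $(M_i)$ and the contractivity of sort are what make this work uniformly.
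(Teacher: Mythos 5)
Your proof is correct, and the first identity together with the direction ``$\dSeq(\f(\mfu,\h-),\f(\mfu,\h))=0 \Rightarrow \nu^{2,\mfu}\{\h\}=0$'' coincide with the paper's argument (Fubini/direct decomposition into essentially disjoint balls, and the estimate $\nu^{2,\mfu}\{\h\}\le 2\overline{\mfu}\,\dSeq(\f(\mfu,\h),\f(\mfu,\h-))$). For the converse, however, you take a genuinely different route. The paper stays at the fixed radius $\h$: it introduces the decomposition into \emph{open} balls $B(\repp_j^\h,\h)$ (Remark \ref{rem_pos}), notes that each closed ball mass is $\mu(\bar B(\rep_i^\h,\h))=\sum_{j\in I_i}\mu(B(\repp_j^\h,\h))$, and observes that $\nu^{2,\mfu}\{\h\}=0$ forces $\sum_{j\in I_i}\mu(B(\repp_j^\h,\h))^2=\bigl(\sum_{j\in I_i}\mu(B(\repp_j^\h,\h))\bigr)^2$ for every $i$, hence $|I_i|=1$ and the open- and closed-ball mass multisets coincide. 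You instead work with closed balls at radii $h'<\h$, deduce from the vanishing of the term-by-term nonnegative defects that the maximal sub-piece mass $m_i^{h'}$ of each $\h$-ball converges to $M_i$, and then pass to $\ell^1$-convergence of the sorted sequences via dominated convergence (with summable envelope $(M_i)$) and the $\ell^1$-non-expansiveness of the decreasing rearrangement. Both arguments are valid; the paper's is shorter but leaves the identification of $\f(\mfu,\h-)$ with the reordered open-ball masses rather implicit (it only matches sums of squares and then appeals to ``the definition of $\f$''), whereas your version makes the passage from per-index convergence to convergence in $\dSeq$ fully explicit, at the price of invoking the rearrangement contraction and a limit over $h'\uparrow\h$ that the paper avoids.
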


\begin{remark}\label{rem.nu.atomic}
 Since $\f(\mfu,\cdot)$ is cadlag and constant between its jump points, the measure $\nu^{2,\mfu}$ is purely atomic.
\end{remark}

\begin{proof}
	Recall Lemma \ref{lem_representatives}. By definition we have
	\begin{equation}\label{eq.connect.0}
	\begin{split}
		\nu^{2,\mfu}[0,h] &= \mu\otimes \mu \big(\big\{(x,y)\in X\times X: r(x,y)  \le h\big\}\big) \\
		&= \sum_{i \in \{1,\ldots,\num(h)\}} \mu(\bar B(\rep_i^h,h))^2 \\
		&=  \sum_{i \in \N} (\f(\mfu, h)_i)^2.
	\end{split}
	\end{equation}
	Moreover, since $\F(\mfu)$ is cadlag, where $\mathcal S^\downarrow$ is equipped with $\dSeq$, we get
	\begin{equation}\label{eq.connect.1}
		\nu^{2, \mfu}[0,h)  = \lim_{\e \downarrow 0}\nu^{2, \mfu}[0,h-\e] = \sum_{i \in \N} (\f(\mfu, h-)_i)^2.
	\end{equation}
	It follows that
	\begin{equation}\label{eq.connect.2}
		\nu^{2,\mfu}\{h\} = \sum_{i \in \N} \left((\f(\mfu, h)_i)^2 - (\f(\mfu, h-)_i)^2\right).
	\end{equation}
	As a consequence we get (recall $\overline{\mfu}:=\sqrt{\nu^{2,\mfu}([0,\infty))}$ is the total mass.)
	\begin{equation}
	\begin{split} \label{rem_aa1}
		\nu^{2,\mfu}\{h\}&= \sum_{i \in \N}  \left((\f(\mfu, h)_i)^2 - (\f(\mfu, h-)_i)^2\right) \\
		&\le \sum_{i \in \N} \big|\f(\mfu, h)_i - \f(\mfu, h-)_i\big|\big(\f(\mfu, h)_i + \f(\mfu, h-)_i\big) \\
		&\le 2\overline{\mfu}\cdot  \dSeq\big(\f(\mfu,h),\f(\mfu,h-)\big). 
	\end{split}
	\end{equation}
	and ``$\Leftarrow $'' follows. Now let $\{\repp_i^h: \ i \in \{1,2,\ldots,\num'(h)\}\}$ be as in Lemma \ref{lem_representatives}  (for $(X,r,\mu)$)  where we
	replaced the closed balls by open balls (see Remark \ref{rem_pos}). Following the proof of Lemma \ref{lem_representatives}
	we find for each $i \in \{1,\ldots,\num(h)\}$ a set 
	$I_{i} \subset \{1,2,\ldots,\num'(h)\}$ such that
	\begin{equation}
		\mu(\bar B(\rep_{i}^h,h)) = \sum_{j \in I_{i}} \mu(B(\repp_{j}^h,h))
	\end{equation}
	and $\{I_i\}_{i = 1,\ldots,\num(h)}$ is a partition of $\{1,2,\ldots,\num'(h)\}$. 	Now, using (\ref{eq.connect.0}) and (\ref{eq.connect.1}), we get 
	\begin{equation}
		\sum_{i \in \N} \f(\mfu, \h-)_i^2 = \sum_{i = 1}^{\num'(\h)}   \mu(B(\repp_{i}^\h,\h))^2
	\end{equation}
	and $\nu^{2,\mfu}(\{\h\}) = 0$ implies that	
	\begin{equation}
	\sum_{i = 1}^{\num'(\h)}   \mu(B(\repp_{i}^\h,\h))^2 = \sum_{i = 1}^{\num(\h)}\mu(\bar B(\rep_{i}^h,h))^2 = 
	\sum_{i = 1}^{\num(\h)} \left(\sum_{j \in I_i}\mu( B(\repp_{j}^h,h))\right)^2. 
	\end{equation}	
	But this is equivalent to 	
	\begin{equation}
	\sum_{i = 1}^{\num(\h)}   \left(\sum_{j \in I_i}\mu(B(\repp_{j}^\h,\h))^2 -  \left(\sum_{j \in I_i}\mu( B(\repp_{j}^h,h))\right)^2\right) = 0. 
	\end{equation}
	Since 	
	\begin{equation}
	\sum_{j \in I_i}\mu(B(\repp_{i}^\h,\h))^2 -   \left(\sum_{j \in I_i}\mu( B(\repp_{j}^h,h))\right)^2 \le 0,
	\end{equation}
	we get
	\begin{equation}
	\sum_{j \in I_i}\mu(B(\repp_{i}^\h,\h))^2 = \left(\sum_{j \in I_i}\mu( B(\repp_{j}^h,h))\right)^2,
	\end{equation}
	for all $i \in \{1,\ldots,\num(h)\}$ and hence $|I_i| = 1$ (note that $\mu( B(\repp_{j}^h,h)) > 0$ for all  $j$ - see Remark \ref{rem_pos}). But this shows that $\mu(\bar B(\rep_{i}^\h,\h)) =  \mu(B(\repp_{i}^\h,\h))$ for all $i$ and by definition of $\f$ (as the reordering of such masses) the result follows.
\end{proof}

We can now finish the proof of Proposition \ref{thm_basic_tools}:

\begin{proof} {\bf Proposition \ref{thm_basic_tools} - (iii)} 	To prove relative compactness recall that  
	\begin{align}
		\nu^{2,\mfu}[0,\h] &= \sum_{i \in \mathbb N} (\f(\mfu, \h)_i)^2, \\
		\nu^{2,\mfu}([0,\infty)) & = \sqrt{\sum_{i \in \mathbb N} \f(\mfu, \h)_i}, 
	\end{align}
	for all $\h > 0$ and that $\h > 0$ is a continuity point of $\f(\mfu,\cdot)$  iff $\nu^{2,\mfu}[0,\h):= \nu^{2,\mfu}[0,\h]$, by Lemma \ref{l.connect.f.nu}.
	
	Since $\dSeq(\f(\mfu_n,\h),\f(\mfu,\h)) \rightarrow 0$ for all continuity points $\h$ of $\f(\mfu,\cdot)$ and 
\begin{equation}
	0\le \limsup_{n \rightarrow \infty} \nu^{2,\mfu_n} (\{0\}) \le \nu^{2,\mfu}([0,\delta])
	\end{equation}
	for all continuity points $\delta>0$, we get
	\begin{equation}\label{eq.conv.pd}
	\lim_{n \rightarrow \infty} \nu^{2,\mfu_n}(\{0\})  = \limsup_{n \rightarrow \infty} \nu^{2,\mfu_n}(\{0\}) = 0, 
	\end{equation}
	if $\nu^{2,\mfu}\{0\} = 0$. This gives
	\begin{equation}
		\nu^{2,\mfu_n} \stackrel{n \rightarrow \infty}{\Longrightarrow} \nu^{2,\mfu}.
	\end{equation}

	For the relative compactness of $\{\mfu_n:\ n \in \N\}$ it remains to show, that (see Theorem 2 and Remark 2.11 in \cite{GPW09}; see also Proposition B.2 in \cite{infdiv}):
	\begin{equation}\label{bew_cor_gl1}
		\lim_{\delta \rightarrow 0} \ \limsup_{n \rightarrow \infty} \nu_{\delta}(\mfu_n) = 0,
	\end{equation}	
	where $\nu_{\delta}(\cdot)$ is the modulus of mass distribution: 
	\begin{equation}\label{eq.modOfMass}
		\nu_{\delta}([X,r,\mu]) = \inf \Big\{\bar \e > 0:\ \mu\big(\{x\in X: \mu(B(x,\bar \e)) \le \delta\}\big) \le \bar \e\Big\}.
	\end{equation}

	Note that if $\mfu_n = [X_n,r_n,\mu_n]$, then (see Lemma \ref{l.connect.f.nu} and its proof)
	\begin{equation}
	\begin{split}
		\mu_n\big(\{x\in X_n: &\mu_n(B^{r_n}(x,2\e)) \le \delta\}\big) \\
		&\le \mu_n\big(\{x\in X_n: \mu_n(\bar B^{r_n}(x,\e)) \le \delta\}\big)\\
			&=	\sum_{i =1}^{\infty} \f(\mfu_n, \e)_i \cdot \mathds{1}(\f(\mfu_n, \e)_i \le \delta),
	\end{split}
	\end{equation}
	for all $\e > 0$. Define 
	\begin{equation}
	M(\delta;\e):= \min\{i \in \mathbb N:\ f(\mfu,\e)_i < \delta\}.
	\end{equation}
	If $\e$ is a continuity point of $\f(\mfu, \cdot)$ we get $\dSeq(\f(\mfu_n, \e),\f(\mfu,\e)) \rightarrow 0$ and hence 
	\begin{equation}
	\begin{split}
	\limsup_{n \rightarrow \infty}  \sum_{i =1}^{\infty} \f(\mfu_n,\e)_i \cdot \mathds{1}(\f(\mfu_n, \e)_i < \delta) 
	&= \limsup_{n \rightarrow \infty} \sum_{i = M(\delta; \e)}^\infty \f(\mfu_n,\e)_i \\
	&= \sum_{i = M(\delta; \e)}^\infty \f(\mfu,\e)_i. 
	\end{split}
	\end{equation}
	Since the right hand side converges to $0$ when $\delta \downarrow 0$, the result follows.
\end{proof}

Finally we give the proof of Proposition \ref{p.tightness.GW}: 

\begin{proof}(Proposition \ref{p.tightness.GW})
First observe that, by assumption, 
\begin{equation}
\begin{split}
\limsup_{n \rightarrow \infty} &P\left(\nu^{2,\U^n}([H,\infty)) \ge \eps\right) \\
&= \limsup_{n \rightarrow \infty} P\left(\left(\sum_{i = 1}^\infty \f(\U^n,H)_i\right)^2-\sum_{i = 1}^\infty \f(\U^n,H)^2_i \ge \eps\right) \\
&\le \eps.
\end{split}
\end{equation}
Hence, $\left(\mathcal L(\nu^{2,\U^n})\right)_{n \in \mathbb N}$ is tight. 
On the other hand, recall the definition of $\nu_\delta$ 
(see \eqref{eq.modOfMass}) and note that by condition (i), for all 
 $\delta > 0$ and $\eps > 0$ there is a $M$ such that 
\begin{equation}
\limsup_{n \rightarrow \infty} P(\sum_{i = M}^\infty \f(\U^n,\delta) \ge \eps) \le \eps.
\end{equation}
Therefore (compare the proof of Proposition \ref{thm_basic_tools} - (iii)), for all $\eps > 0$ there is a $\delta > 0$ such that 
\begin{equation}
\begin{split}
 \limsup_{n \rightarrow \infty} &P\left(\nu_\delta(\U^n) \ge \eps\right) \\
&\le \limsup_{n \rightarrow \infty}  P\left(\sum_{i =1}^{\infty} \f(\mfu_n, \e)_i \cdot \mathds{1}(\f(\mfu_n, \e)_i \le \delta) \ge \eps \right) \\
&=  \limsup_{n \rightarrow \infty} P\left(\sum_{i = M}^\infty \f(\mfu,\e)_i \ge \eps \right)\\
&\le \eps.
\end{split}
\end{equation}
Hence, the result follows by Theorem 3 in \cite{GPW09} (compare also Remark 3.2 and Remark 7.2 (ii) in \cite{GPW09}). 
\end{proof}

\subsection{Proof of Theorem \ref{thm.perfect} and Theorem \ref{thm.subspace.homeomorphism}}

The main ingredient for the proof of continuity in Theorem \ref{thm.perfect} is the following lemma.

\begin{lemma}\label{l.conv.contpoint}
Let $(\mfu_n)_{n \in \N}$ be a sequence in $\UM$ and $\mfu \in \UM$.
If $\dGP(\mfu_n,\mfu) \rightarrow 0$ then $\dSeq(\f(\mfu_n,h),\f(\mfu,h)) \rightarrow 0$ for all continuity points $h$ of $\f(\mfu,\cdot)$.
\end{lemma}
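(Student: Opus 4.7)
The plan is to combine the relative compactness from Proposition \ref{thm_basic_tools}(i) with a moment-matching argument to identify every subsequential limit of $\f(\mfu_n,h)$ as $\f(\mfu,h)$. By Proposition \ref{thm_basic_tools}(i), $\{\f(\mfu_n,h):\ n \in \mathbb N\}$ is relatively compact in $(\mathcal S^\downarrow,\dSeq)$; it therefore suffices to fix any subsequence $(n_j)$ with $\dSeq(\f(\mfu_{n_j},h),x)\to 0$ for some $x\in \mathcal S^\downarrow$, and to show $x=\f(\mfu,h)$.

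By Lemma \ref{l.connect.f.nu}, the hypothesis that $h$ is a continuity point of $\f(\mfu,\cdot)$ is equivalent to $\nu^{2,\mfu}(\{h\})=0$. The next step is to exploit the Gromov-weak convergence $\nu^{k,\mfu_n}\Rightarrow \nu^{k,\mfu}$ for every $k\ge 2$ via the Portmanteau theorem applied to
$$A_k:=\left\{(r_{ij})_{i<j}\in \mathbb R_+^{\binom{k}{2}}:\ r_{ij}\le h \text{ for all } i<j\right\}.$$
The topological boundary of $A_k$ lies in $\bigcup_{i<j}\{r_{ij}=h\}$, and the $\nu^{k,\mfu}$-measure of each such hyperplane is bounded by $\overline{\mfu}^{\,k-2}\,\nu^{2,\mfu}(\{h\})=0$, so $\nu^{k,\mfu}(\partial A_k)=0$. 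Because the ultrametric property forces ``$r_{ij}\le h$ for all $i<j$'' to be the same event as ``all $k$ samples lie in a common closed $h$-ball'', Lemma \ref{lem_representatives} identifies $\nu^{k,\mfu}(A_k)=\sum_i \f(\mfu,h)_i^k$, so Portmanteau yields
$$\sum_i \f(\mfu_{n_j},h)_i^k \longrightarrow \sum_i \f(\mfu,h)_i^k, \qquad k\ge 2.$$

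Combined with the $\dSeq$ (i.e.\ $\ell^1$) convergence $\f(\mfu_{n_j},h)\to x$ and the uniform $\ell^\infty$ bound $\sup_n \overline{\mfu_n}<\infty$ coming from Remark \ref{r.totalMassCont}, the elementary inequality $|a^k-b^k|\le k(a\vee b)^{k-1}|a-b|$ shows that $\sum_i \f(\mfu_{n_j},h)_i^k\to \sum_i x_i^k$, whence $\sum_i x_i^k=\sum_i\f(\mfu,h)_i^k$ for every $k\ge 1$ (the case $k=1$ being the total mass identity $\sum_i x_i=\overline{\mfu}$). Since both $x$ and $\f(\mfu,h)$ lie in $\mathcal S^\downarrow$, the standard identity $(\sum_i x_i^k)^{1/k}\to x_1$ as $k\to\infty$ forces $x_1=\f(\mfu,h)_1$; peeling this top value off both sequences and iterating produces $x_i=\f(\mfu,h)_i$ for every $i$, hence $x=\f(\mfu,h)$.

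The main obstacle is converting the scalar continuity hypothesis ``$h$ is a continuity point of $\f(\mfu,\cdot)$'' into control on the boundaries of sets in $\mathbb R_+^{\binom{k}{2}}$ so that Portmanteau can be applied at every order. Lemma \ref{l.connect.f.nu} together with the ultrametric reduction of $A_k$ to a disjoint union of ball events makes this translation clean --- each hyperplane $\{r_{ij}=h\}$ has $\nu^{k,\mfu}$-mass controlled by $\nu^{2,\mfu}(\{h\})$ --- but without this ultrametric reduction the Portmanteau step at $k\ge 3$ is where the argument would otherwise fail.
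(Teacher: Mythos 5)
Your proof is correct, but it takes a genuinely different route from the paper. The paper also begins with the relative compactness of $\{\f(\mfu_n,h)\}$ from Proposition \ref{thm_basic_tools}(i), but then identifies the subsequential limit through the concatenation machinery of Section \ref{sec.concatenation}: it decomposes each $\mfu_n$ into its $h$-balls, extracts Gromov-weakly convergent subsequences of these sub-trees via Lemma \ref{lem_topproperties}, and uses Lemma \ref{lem_uni} (which rules out the merging of two distinct balls in the limit precisely because $\nu^{2,\mfu}(\{h\})=0$ at a continuity point) together with a mass-counting argument to show that the multiset of ball masses, with multiplicities, converges to that of $\mfu$. You replace all of this by a method-of-moments argument: the ultrametric reduction of $A_k=\{r_{ij}\le h\ \forall i<j\}$ to the disjoint union of $k$-fold ball events gives $\nu^{k,\mfu}(A_k)=\sum_i\f(\mfu,h)_i^k$ (the $k=2$ case being \eqref{eq.connect.0}), the bound $\nu^{k,\mfu}(\partial A_k)\le\binom{k}{2}\,\overline{\mfu}^{\,k-2}\nu^{2,\mfu}(\{h\})=0$ makes Portmanteau applicable at every order, and equality of all power sums pins down an element of $\mathcal S^\downarrow$ via $(\sum_i x_i^k)^{1/k}\to x_1$ and peeling. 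All of these steps check out (the passage from $\dSeq$-convergence to convergence of $k$-th power sums uses the uniform bound on entries by the total mass, as you note). What each approach buys: yours is shorter and more self-contained, needing only the distance matrix distributions and no concatenation formalism; the paper's argument, via Lemma \ref{lem_uni}, additionally delivers convergence of the individual sub-trees and the lower semicontinuity of the number-of-balls map (Remark \ref{r.num.lower}), which are reused later, e.g.\ in the proof of Theorem \ref{thm_polish}(c).
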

Before we can prove this Lemma we need the following: 
\begin{lemma} \label{lem_uni}
Recall the notation of Section \ref{sec.concatenation} and let $\UM$ be equipped with the Gromov-weak topology. 
Let $(\mfu_n)_{n \in \N}$ be a sequence in $\UM$, $\mfu \in \UM$ and $\mfu_i^n $, $\mfu_i$ be  as in Remark \ref{Bem_fun}. Moreover, let 
$\hat \mfu_n := \bigsqcup_{i \in J_n}^h \mfu_i^n$ and assume that $|J_n| \equiv C \in \N$. If $h>0$ is a continuity point of $f(\mfu,\cdot)$, 
$\hat \mfu_n \rightarrow \hat \mfu$ and $\mfu_n \rightarrow \mfu$ in the Gromov-weak topology, then  $\hat \mfu \le_h \Top_h(\mfu)$ and $\hat \mfu  = \bigsqcup_{i \in J}^h \mfu_i \in \UM $ with $|J| \le C$. 
\end{lemma}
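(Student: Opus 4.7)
The plan is to proceed in three stages: first establish the order relation $\hat \mfu \le_h \Top_h(\mfu)$, then extract convergent subsequences of the individual factors, and finally identify those limiting factors with the $h$-families of $\mfu$.

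\textbf{Stage 1: the ordering.} By the definition of the $h$-top and concatenation I can write
\begin{equation}
\Top_h(\mfu_n) \;=\; \hat \mfu_n \sqcup_h \mfu'_n, \qquad \mfu'_n := \bigsqcup_{i \notin J_n}^{\,h} \mfu_i^n,
\end{equation}
where $\mfu'_n$ is understood as the null space if $J_n$ indexes all families. Since $\hat \mfu_n$ and $\mfu'_n$ are themselves concatenations at level $h$, we have $\Psi_h(\hat \mfu_n) = \hat \mfu_n$ and $\Top_h(\mfu'_n) = \mfu'_n$. Setting $\chi_n := \Psi_h(\hat \mfu_n) \sqcup_h \Top_h(\mfu'_n) = \Top_h(\mfu_n)$, Lemma \ref{lem_topproperties}(ii) gives $\chi_n \to \Top_h(\mfu)$, while by hypothesis $\Psi_h(\hat \mfu_n) = \hat \mfu_n \to \hat \mfu$. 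Lemma \ref{lem_topproperties}(i) then yields $\hat \mfu \le_h \Top_h(\mfu)$, and in particular $\hat \mfu \in \UM$.

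\textbf{Stage 2: pre-compactness of the factors and passing to a limit of the concatenation.} For each $i \in J_n$ one has $\mfu_i^n \le_h \mfu_n$, so Lemma \ref{lem_topproperties}(iii) (combined with $\mfu_n \to \mfu$) makes $\{\mfu_i^n : n \in \N\}$ relatively compact in the Gromov-weak topology. Since $|J_n| \equiv C$ is finite, a diagonal extraction allows me to assume $J_n$ is a fixed set $\{1,\dots,C\}$ and $\mfu_i^n \to \mfu_i^\ast$ in $\UM$ for each $i$. A finite iteration of Lemma \ref{lem_topproperties}(v) then delivers
\begin{equation}
\hat \mfu_n = \bigsqcup_{i=1}^{C}{}^{\,h} \mfu_i^n \;\longrightarrow\; \bigsqcup_{i \in J}^{\,h} \mfu_i^\ast,
\qquad J := \{\,i : \overline{\mfu_i^\ast} > 0\,\},
\end{equation}
so by uniqueness of limits $\hat \mfu = \bigsqcup_{i \in J}^h \mfu_i^\ast$ and $|J| \le C$.

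\textbf{Stage 3: identifying $\mfu_i^\ast$ with a family of $\mfu$.} This is where the continuity of $\f(\mfu,\cdot)$ at $h$ enters decisively. By Lemma \ref{l.connect.f.nu} continuity at $h$ is equivalent to $\nu^{2,\mfu}(\{h\})=0$, i.e.\ $\mu$-almost no pair of points in $\mfu$ lies at distance exactly $h$; so every $h$-ball of $\mfu$ actually has diameter strictly less than $h$. Combined with Stage~1, write $\Top_h(\mfu) = \hat \mfu \sqcup_h \mfu'$ with $\nu^{2,\mfu'}(h,\infty)=0$. Each atom $\mfu_i^\ast$ of $\hat \mfu$ has diameter $\le h$; if $\mfu_i^\ast$ were a proper sub-piece of some family $\mfu_k$ of $\mfu$ (i.e.\ an incomplete $h$-ball), the complementary mass in $\mfu_k$ would end up either in $\mfu'$ or in another factor of $\hat \mfu$, producing pairs at distance exactly $h$ between points of the same original $h$-ball $\mfu_k$ — contradicting $\nu^{2,\mfu}(\{h\})=0$. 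Conversely, $\mfu_i^\ast$ cannot merge two distinct $\mfu_k,\mfu_{k'}$ because the factors of the concatenation $\bigsqcup_{i \in J}^h \mfu_i^\ast$ are pairwise separated by distance $h$ in $\hat \mfu$, while inside a single $\mfu_k$ pairs have distance strictly less than $h$. Hence $\mfu_i^\ast = \mfu_{\sigma(i)}$ for some injection $\sigma: J \to \{1,\ldots,n(h)\}$, which is the claim.

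The main obstacle is Stage~3: turning the informal ``no mass at distance $h$'' picture into a rigorous statement about how the concatenation decomposition of $\Top_h(\mfu) = \hat \mfu \sqcup_h \mfu'$ refines through actual $h$-balls. The clean route is to transfer everything to a distance-matrix computation — comparing $\nu^{2,\hat \mfu} + \nu^{2,\mfu'} + (\text{cross term at }h)$ with $\nu^{2,\Top_h(\mfu)}$ and using the vanishing of $\nu^{2,\mfu}(\{h\})$ to rule out the splitting/merging scenarios sketched above. Stages~1 and 2 are essentially bookkeeping using the toolbox already provided by Lemma~\ref{lem_topproperties}.
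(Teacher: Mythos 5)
Your Stages 1 and 2 are correct and match the paper: Stage 1 is exactly the paper's opening move ($\hat \mfu_n \le_h \Top_h(\mfu_n)$ plus Lemma \ref{lem_topproperties}(i),(ii)), and your Stage 2 extraction of convergent factors via Lemma \ref{lem_topproperties}(iii),(v) is essentially the paper's subsequence argument for bounding $|J|$. Moreover, the ``no merging'' half of your Stage 3 — that a limit factor $\mfu_i^\ast$ cannot contain two grains at mutual distance $h$, because $\nu^{2,\mfu_i^n} \le \nu^{2,\mfu_n}$ and the Portmanteau theorem force $\nu^{2,\mfu_i^\ast}(\{h\}) \le \nu^{2,\mfu}(\{h\}) = 0$ — is precisely the paper's argument that $|\tilde J| \le 1$.

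The genuine gap is the ``no splitting'' half of Stage 3, which you yourself flag as the main obstacle. From $\Top_h(\mfu) = \hat\mfu \sqcup_h \mfu'$ as an identity of \emph{equivalence classes}, you need to rule out that a measure-preserving isometry distributes the mass of a single $h$-ball $\mfu_k$ of $\mfu$ across $\hat\mfu$ and $\mfu'$ (or across two factors of $\hat\mfu$); your appeal to ``pairs at distance exactly $h$ between points of the same original $h$-ball'' presupposes a labelling of points that an abstract isometry need not respect, so the distance-matrix comparison you gesture at is exactly what is missing. The paper closes this gap by quoting the unique prime factorization theorem for concatenations (Theorem 2.13 in \cite{infdiv}): since $h$ is a continuity point of $\f(\mfu,\cdot)$, Lemma \ref{l.connect.f.nu} gives $\nu^{2,\mfu}(\{h\}) = 0$, and together with Lemma \ref{lem_topproperties}(iv) this yields $\nu^{2,\mfu_i}([h,\infty)) = 0$ for every $h$-ball $\mfu_i$; under this hypothesis the factorization of $\Top_h(\mfu)$ into grains is unique, so $\hat\mfu \le_h \Top_h(\mfu)$ immediately forces $\hat\mfu = \bigsqcup_{i \in J}^h \mfu_i$ for some subset $J$ of the grains. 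Without invoking that uniqueness result (or reproving it), your Stage 3 does not go through; with it, your Stage 2 becomes redundant except as the mechanism for the bound $|J| \le C$, which is how the paper organizes the proof.
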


\begin{proof}
Note that $\hat \mfu_n \le_h \Top_h(\mfu_n)$ and hence Lemma \ref{lem_topproperties} (i) and (ii) implies $\hat \mfu \le_h \Top_h(\mfu)$.
Moreover, by Lemma \ref{l.connect.f.nu} and Lemma \ref{lem_topproperties} (iv) we get
\begin{equation}
\nu^{2,\mfu_i}([h,\infty)) = 0,\qquad \forall i = 1,\ldots,n(h)
\end{equation}
and hence Theorem 2.13 in \cite{infdiv} implies the existence of a set $J$ such that
\begin{equation}
\hat \mfu = \bigsqcup^h_{i \in J} \mfu_i.
\end{equation}

If we take a sequence $i_n \in J_n$, then  Lemma \ref{lem_topproperties} (iii) and (i) gives the existence of a subsequence such that $\mfu^{n_k}_{i_{n_k}} \rightarrow \tilde \mfu \le_h \hat \mfu$ and hence, there is a subset $\tilde J \subset J$ such that 
\begin{equation}
\tilde \mfu = \bigsqcup^h_{i \in \tilde J} \mfu_i.
\end{equation}

We assume that 
\begin{equation}
|\tilde J| \ge 2.
\end{equation}
Then there are $j_1,j_2 \in \tilde J$ such that (see Lemma \ref{l.connect.f.nu}, Remark \ref{Bem_fun} and Lemma \ref{lem_representatives}):
\begin{equation}
\nu^{2,\tilde \mfu}(\{h\}) \ge \overline{\mfu_{j_1}}\cdot \overline{\mfu_{j_2}} >0
\end{equation}
and for every $\delta > 0$, by the Portmanteau Theorem:
\begin{equation}
\begin{split}
0 &< \nu^{2,\tilde \mfu}(\{h\}) \le \nu^{2,\tilde \mfu}(h-\delta,h+\delta) \le \liminf_{k \rightarrow \infty}\nu^{2,\mfu^{n_k}_{i_{n_k}}}(h-\delta,h+\delta)\\
& \stackrel{Lem \ \ref{lem_topproperties} (iv)}{\le} \liminf_{k \rightarrow \infty}\nu^{2, \mfu_{n_k}}(h-\delta,h+\delta)\\
&\le \limsup_{k \rightarrow \infty}\nu^{2, \mfu_{n_k}}[h-\delta,h+\delta] \le \nu^{2, \mfu}[h-\delta,h+\delta].
\end{split}
\end{equation}
Since $\bigcap_{\delta >0}[h-\delta,h+\delta]= \{h\}$, this is a contradiction to Lemma \ref{l.connect.f.nu}. 
Hence $|\tilde  J| \le 1$ and therefore there is either one $i \in J$ such that
\begin{equation}
\mfu_{n_k}^{i_{n_k}}\rightarrow \mfu_i,\\[-0.3cm]
\end{equation}
or 
\begin{equation}\label{eq.conv.neutral}
\mfu_{n_k}^{i_{n_k}} \rightarrow [\{0\},0,0]=:e,
\end{equation}
the neutral element with respect to $\bigsqcup$, i.e. $\mfu \sqcup e = \mfu = e \sqcup \mfu$. Combining this with Lemma \ref{lem_topproperties} (iii) and (v) implies $|J| \le C$.  
\end{proof}

\begin{remark}\label{r.num.lower}
In fact, the above argument shows that the number of balls map, i.e. $\num(h): \UM \to\mathbb N \cup \{\infty\}$, for $h > 0$ (see Lemma \ref{lem_representatives}), is lower semi-continuous, when $h$ is a continuity point.
\end{remark}

Now we can prove Lemma \ref{l.conv.contpoint}: 
\begin{proof} (Lemma \ref{l.conv.contpoint})
Let $(n_k)_{k \in \N}$ such that $\dSeq(\f(\mfu_{n_k},h), x) \rightarrow 0 \in \mathcal S^\downarrow$ 
(see Proposition \ref{thm_basic_tools} (i)). We will now prove that $x = \f(\mfu,h)$ and therefore we may assume w.l.o.g. that $\f(\mfu_{n},h) \rightarrow x$ for the following. Define
\begin{equation}
\begin{split}
&\tilde x_1 := x_1,\\
&\tilde x_l := \max(\{x_i: i \in \N\}\backslash\{\tilde x_1,\ldots,\tilde x_{l-1}\}),\ l \ge 1.
\end{split}
\end{equation}
Note that $\dSeq(\f(\mfu_{n},h), x) \rightarrow 0$ implies: For all $L \in \N$ there is a $\bar \e > 0$ and $N\in \mathbb N$ 
such that
\begin{equation}
\{i \in \mathbb N:\ |\f(\mfu_n,h)_i - \tilde x_l| < \e\}  = \{i \in \N:\ x_i = \tilde x_l\} =: C_l,
\end{equation}
for all $n \ge N$, $0 < \eps < \bar \eps$ and $l \le L$. Let $(\mfu_i^n)_{i \in \mathbb N}$ and $(\mfu_i)_{i \in \mathbb N}$ be as in Remark \ref{Bem_fun}, $\delta > 0$ and $L \in \mathbb N$ large enough such that 
\begin{equation}
\limsup_{n \rightarrow \infty} \left|\sum_{l \le L} |C_l| \tilde x_l -  \sum_{i \in \mathbb N} \f(\mfu_n,h)_i \right| \le \delta
\end{equation}
 and define 
\begin{equation}
U^l_n:= \{i \in \N:\ |\overline{\mfu_i^n} - \tilde x_l| < \e\}.
\end{equation}
Then, by Remark \ref{Bem_fun} and for all $n$ large enough,
\begin{equation}
|U^l_n| = |\{i \in \N:\ |\f(\mfu_n,h)_i - \tilde x_l| < \e\} | = |C_l|\in \mathbb N
\end{equation}

and  by Lemma \ref{lem_uni} and Lemma \ref{lem_topproperties} (iii) and (v), we can find a subsequence $(n_k)_{k \in \N}$ and a set $\hat U^l \subset \N$ with $|\hat U^l| \le |C_l|$ such that 
\begin{equation}
\{\mfu_i^{n_k}:\ i \in U_{n_k}^l\} \rightarrow \{\mfu_i:\ i \in \hat U^l\}
\end{equation}
(where we allow duplications in the above sets) and 
\begin{equation}
\bigsqcup_{i \in U_{n_k}^l} \mfu_i^{n_k} \rightarrow \bigsqcup_{i \in \hat U^l} \mfu_i, 
\end{equation}
for all $l \le L$. Let 
\begin{equation}
U^l:= \{i \in \N:\ \overline{\mfu_i} = \tilde x_l\}.
\end{equation}
Since $\overline{\mfu_{i_n}^n} \rightarrow \tilde x_l$  for every sequence $i_n \in U^l_n$ we get  $\hat U^l \subset U^l$ (independent of the choice of $(n_k)$) for all $l \le L$ and, by the observation in \eqref{eq.conv.neutral}, we have $|\hat U^l| = |C_l|$. On the other hand, if there is a $l^\ast\le L$ such that $|U^{l^\ast}| > |C_{{l^\ast}}|$ then 
\begin{equation}
\begin{split}
\overline{\mfu} &= \lim_{n \rightarrow \infty} \overline{\mfu}_n = \lim_{n \rightarrow \infty} \sum_{l \in \mathbb N} f(\mfu_n,h)_l \\
&\le  \lim_{n \rightarrow \infty} \sum_{l \le L} |C_l| \tilde x_l + \delta
= \lim_{k \rightarrow \infty} \sum_{l \le L} \sum_{i \in U^l_{n_k}} \overline{\mfu_{i}^{n_k}} + \delta\\
&= \sum_{l \le L} \sum_{i \in \hat U^l} \overline{\mfu_{i}} + \delta
\le \sum_{l \le L} \sum_{i \in U^l} \overline{\mfu_{i}} - \tilde x_{l^\ast} + \delta \\
&\le \overline{\mfu} - \tilde x_{l^\ast} + \delta, 
\end{split}
\end{equation}
which implies $|U^l| = |C_l|$ for all $l \le L$ with $\tilde x_l \ge \delta$. Hence, 
$\hat U^l = U^l$ for all those $l$ and therefore, letting $\delta \downarrow 0$, $x = f(\mfu,h)$.
\end{proof}

In order to prove Theorem \ref{thm.perfect}, we need the following Lemma. 

\begin{lemma}\label{lem_fnu}
	Let  $(x_n)_{n \in \N}$, $(y_n)_{n \in \N}$ be a sequence in $(0,\infty)$ with $x_n \rightarrow x >0$, $y_n \rightarrow y >0$ for 
	$n \rightarrow \infty$ and $x_n < y_n$ for all $n \in \N$. Moreover, let $(\mfu_n)_{n \in \N}$ be a sequence in $\UM$ with 
	$\mfu_n \rightarrow \mfu \in \UM$, for $n \rightarrow \infty$ (with respect to the Gromov-weak topology). Then 
	\begin{itemize}
	\item[(i)] $\nu^{2,\mfu_n}(x_n, y_n] \rightarrow 0$ iff $\dSeq\Big(\f\big(\mfu_n,x_n\big),\f\big(\mfu_n,y_n\big)\Big) \rightarrow 0$,
			for $n \rightarrow \infty$.
	\item[(ii)] $\nu^{2,\mfu_n}[x_n, y_n) \rightarrow 0$ iff $\dSeq\Big(\f\big(\mfu_n,x_n-\big),\f\big(\mfu_n,y_n-\big)\Big) \rightarrow 0$,
			for $n \rightarrow \infty$.
	\item[(iii)] $\nu^{2,\mfu_n}(x_n, y_n) \rightarrow 0$ iff $\dSeq\Big(\f\big(\mfu_n,x_n\big),\f\big(\mfu_n,y_n-\big)\Big) \rightarrow 0$,
			for $n \rightarrow \infty$.
	\end{itemize}
\end{lemma}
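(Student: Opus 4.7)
The plan rests on two earlier facts. Lemma~\ref{l.connect.f.nu} gives
\begin{equation}
 \nu^{2,\mfu_n}[0,h]=\sum_i \f(\mfu_n,h)_i^{2},\qquad \nu^{2,\mfu_n}[0,h)=\sum_i \f(\mfu_n,h-)_i^{2}
\end{equation}
(the second identity via the open-ball version of Lemma~\ref{lem_representatives} used in that proof), so every interval-mass in (i)-(iii) is a difference of squared $\ell^{2}$-norms of the relevant $\f$-sequences. Lemma~\ref{lem_monoton} shows that $\f(\mfu_n,y_n)$ majorizes $\f(\mfu_n,x_n)$ (partial sums dominate; totals agree with $\overline{\mfu_n}$); using the cadlag property of $\F(\mfu_n)$ in $\dSeq$ (Proposition~\ref{thm_basic_tools}(ii)) and passing $h\uparrow y_n$ or $h\uparrow x_n$, this extends to the left-limit pairings needed for (ii) and (iii). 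I argue (i) in detail; (ii) and (iii) follow by the identical template.

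For the implication $\dSeq \to 0 \Longrightarrow \nu^{2,\mfu_n}(x_n,y_n]\to 0$, I factor
\begin{equation}
 \nu^{2,\mfu_n}(x_n,y_n] = \sum_i \bigl(\f(\mfu_n,y_n)_i-\f(\mfu_n,x_n)_i\bigr)\bigl(\f(\mfu_n,y_n)_i+\f(\mfu_n,x_n)_i\bigr).
\end{equation}
By Remark~\ref{r.totalMassCont} the masses $\overline{\mfu_n}$ converge, so $C:=\sup_n\overline{\mfu_n}<\infty$; since each $\f(\mfu_n,\cdot)_i\le\overline{\mfu_n}\le C$ this yields the linear bound $0\le \nu^{2,\mfu_n}(x_n,y_n]\le 2C\,\dSeq(\f(\mfu_n,x_n),\f(\mfu_n,y_n))$, closing this direction.

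For the converse I argue by contradiction. Suppose $\nu^{2,\mfu_n}(x_n,y_n]\to 0$ but $\dSeq(\f(\mfu_n,x_n),\f(\mfu_n,y_n))\ge\varepsilon$ along a subsequence. By Proposition~\ref{thm_basic_tools}(i), I extract a further subsequence along which $\f(\mfu_n,x_n)\to a$ and $\f(\mfu_n,y_n)\to b$ in $\dSeq$. The uniform $\ell^{\infty}$-bound $C$ and $\dSeq$-convergence yield $\sum a_i^{2}=\lim\sum\f(\mfu_n,x_n)_i^{2}$ (and analogously for $b$), so the hypothesis forces $\sum a_i^{2}=\sum b_i^{2}$; $\dSeq$-convergence also preserves $\ell^{1}$-mass and partial sums, hence $\sum a_i=\sum b_i=\overline{\mfu}$ and $b$ majorizes $a$. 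To close, I need: if $b$ majorizes $a$ with equal $\ell^{1}$- and $\ell^{2}$-norms (both non-negative, decreasing, in $\ell^{1}$), then $a=b$, contradicting $\dSeq(a,b)\ge\varepsilon$.

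The main obstacle is this strict Schur-convexity step, which I would handle via the Abel-summation identity
\begin{equation}
 \sum_i(b_i^{2}-a_i^{2}) = \sum_i D_i (S_i-S_{i+1}),\qquad D_i:=\sum_{j\le i}(b_j-a_j),\ S_i:=b_i+a_i,
\end{equation}
where $D_i\ge 0$ by majorization, $D_i\to 0$ (since $\sum(b-a)=0$), and $S_i$ is non-increasing. If $a\ne b$, at the smallest index $i^{\ast}$ of disagreement we have $D_{i^{\ast}}>0$; on any flat plateau of $S$ after $i^{\ast}$ (where $a_k=a_{k+1}$ and $b_k=b_{k+1}$) the sequence $D$ strictly increases by the positive constant $b_k-a_k$ per step, so the plateau must be finite (else $D$ grows unboundedly, contradicting $D_i\le\overline{\mfu}$). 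Hence some $k$ satisfies $D_k>0$ and $S_k>S_{k+1}$, forcing the right-hand side strictly positive, a contradiction with $\sum a_i^{2}=\sum b_i^{2}$. Parts (ii) and (iii) follow by the same template using $\nu^{2,\mfu_n}[0,y_n)=\sum \f(\mfu_n,y_n-)_i^{2}$ and $\nu^{2,\mfu_n}(x_n,y_n)=\nu^{2,\mfu_n}[0,y_n)-\nu^{2,\mfu_n}[0,x_n]$; Proposition~\ref{thm_basic_tools}(i) transfers to left-limit sequences because its proof uses only the uniform total-mass bound and the modulus-of-mass estimate, both insensitive to the open/closed-ball distinction.
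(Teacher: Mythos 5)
Your proof is correct, and for the easy direction it coincides with the paper's: both use the bound $\nu^{2,\mfu_n}(x_n,y_n]\le 2\overline{\mfu_n}\,\dSeq(\f(\mfu_n,x_n),\f(\mfu_n,y_n))$ as in \eqref{rem_aa1}. For the converse, however, you take a genuinely different route. The paper works directly with the merging structure: it invokes Lemma \ref{lem_representatives} to produce, for each $n$, a partition $\{I_i^n\}$ of the $x_n$-balls into the $y_n$-balls, rewrites the interval mass as twice the sum of cross-products $\sum_{k<l\in I_i^n}\f(\mfu_n,x_n)_k\f(\mfu_n,x_n)_l$, and then proves $a=b$ by a coordinatewise induction ($b_1\le a_1$ and $b_1\ge a_1$, then reorder and repeat) exploiting that these cross-terms vanish. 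You discard the partition entirely and keep only its two consequences --- majorization from Lemma \ref{lem_monoton} and the $\ell^2$-identity from Lemma \ref{l.connect.f.nu} --- and close with a strict Schur-convexity statement for nonincreasing nonnegative $\ell^1$-sequences. Your Abel-summation argument for that statement is sound: at the first index of disagreement $D_{i^\ast}=b_{i^\ast}-a_{i^\ast}>0$, every term $D_i(S_i-S_{i+1})\ge 0$ must vanish, equality of consecutive $S$-values forces $a$ and $b$ each to be constant across the plateau, so $D$ increases by the fixed amount $d_{i^\ast}>0$ at every step and eventually exceeds $\overline{\mfu}$, a contradiction. What your route buys is a cleaner separation of the sequence analysis from the tree combinatorics (and it sidesteps the somewhat delicate normalization by $\f(\mfu_n,x_n)_{m_n^1}$ in the paper's display \eqref{eq_ab0}); what the paper's route buys is that the same partition bookkeeping is reused nearby (Lemma \ref{lem_uni}, Remark \ref{r.num.lower}). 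The one place you are terser than you should be is the transfer of Proposition \ref{thm_basic_tools}(i) to the left-limit sequences needed in (ii) and (iii); the cleanest repair is a diagonal choice $h_n\uparrow y_n$, $h_n\to y$, with $\dSeq(\f(\mfu_n,h_n),\f(\mfu_n,y_n-))<1/n$, after which the proposition applies verbatim --- though the paper is equally brief on exactly this point.
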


\begin{proof}
	Similar to (\ref{rem_aa1}) we get ``$\Leftarrow$'' and it remains to prove ``$\Rightarrow$''. By Lemma \ref{l.connect.f.nu} we have
	\begin{equation}
		\nu^{2,\mfu_n}(x_n, y_n] = \sum_{i \in \N} \Big((\f(\mfu_n, x_n)_i)^2 - (\f(\mfu_n, y_n)_i)^2\Big)
	\end{equation}
	and by Lemma \ref{lem_representatives} we find for all $n \in \N$ a partition $\{I_i^n\}_{i = 1,\ldots,N(y_n)}$ of $\{1,\ldots,N(x_n)\}$ (where we write $N(h)$ instead of $n(h)$ to avoid confusion with the index of the sequence $\mfu_n$)
	such that 
	\begin{equation}\label{eq.p.fnu.1}
		\begin{split}
			\sum_{i \in \N} &\Big((\f(\mfu_n, y_n)_i)^2 - (\f(\mfu_n, x_n)_i)^2\Big)\\
			&= \sum_{i = 1}^{N(y_n)} \Big(\sum_{j \in I^n_i} \f(\mfu_n, x_n)_j\Big)^2 
					- \sum_{i = 1}^{N(x_n)}(\f(\mfu_n, x_n)_i)^2\Big) \\
			&= \sum_{i = 1}^{N(y_n)}  \sum_{k,l \in I^n_i,\ k \neq l} \f(\mfu_n, x_n)_k\cdot \f(\mfu_n, x_n)_l\\
			&= 2 \sum_{i = 1}^{N(y_n)}  \mathds{1}(|I^n_i| \ge 2) \sum_{k,l \in I^n_i,\ k < l} \f(\mfu_n, x_n)_k\cdot \f(\mfu_n, x_n)_l \\
			&\stackrel{n \rightarrow \infty}{\longrightarrow } 0. 
		\end{split}
	\end{equation}	
	We apply Proposition \ref{thm_basic_tools} (i) twice and find $a,b \in \mathcal S^\downarrow$ such that 
	\begin{equation}
		\dSeq\Big(\f\big(\mfu_{n_k},x_{n_k}\big),a\Big) \rightarrow 0,\qquad \dSeq \big(\f(\mfu_{n_k},y_{n_k}),b \big) \rightarrow 0,
	\end{equation}	
	along some subsequence $(n_k)_{k \in \N}$. 	Using an induction argument, we will now 	prove that 	$a = b$ holds. \par 	
	We suppress the dependence on the subsequence, set $m_n^1:= \min(I^n_{1})$ and observe that by \ref{eq.p.fnu.1}
	\begin{equation}\label{eq_ab0}
		\begin{split}
			b_1 &= \lim_{n \rightarrow \infty} f(\mfu_n,y_n)_1 = \lim_{n \rightarrow \infty} \sum_{j \in I^n_{1}} \f(\mfu_n, x_n)_j \\
			&= \lim_{n \rightarrow \infty}\f(\mfu_n, x_n)_{m_n^1} \\
			&\hspace{0.5cm}+ \lim_{n \rightarrow \infty}\ \mathds{1}(|I^n_i| \ge 2) \frac{1}{\f(\mfu_n, x_n)_{m_n^1}}\sum_{j \in I^n_{1}, j >m_n^1} \f(\mfu_n, x_n)_{m_n^1}\f(\mfu_n, x_n)_j\\
			&= \lim_{n \rightarrow \infty} \f(\mfu_n, x_n)_{m_n^1} \le a_1.
		\end{split}
	\end{equation}
	Since $\{I_i^n\}_{i = 1,\ldots,N(y_n)}$ is a partition of $\{1,\ldots,N(x_n)\}$ we find a $\bar i_n \in \{1,\ldots,N(y_n)\}$, such
	that $1 \in I_{\bar i_n}^n$. Again, by \eqref{eq.p.fnu.1}, it follows that 
	\begin{equation}\label{eq_ba}
		\begin{split}
			b_1 &= \lim_{n \rightarrow \infty}\f(\mfu_n, y_n)_1 \ge \lim_{n \rightarrow \infty} \f(\mfu_n, y_n)_{\bar i_n} \\
			&= \lim_{n \rightarrow \infty} \sum_{j \in I^n_{\bar i_n}} \f(\mfu_n, x_n)_j = \lim_{n \rightarrow \infty} \f(\mfu_n, x_n)_1  = a_1
		\end{split}
	\end{equation}	
	and hence $a_1 = b_1$. Let $k \in \N$ and assume that $a_l = b_l$ for all $l = 1,2,\ldots,k$, i.e. 
	\begin{equation}
		\lim_{n \rightarrow \infty} |\f(\mfu_n, y_n)_l - \f(\mfu_n, x_n)_l | = 0. 
	\end{equation} 
	Then, after a suitable reordering, we can assume that $l \in I_l^n$ for all $l = 1,2,\ldots,k$ and all $n$ large enough.
	We can now apply the argument in (\ref{eq_ab0}) and in (\ref{eq_ba}) to get  $b_{k+1} \le a_{k+1}$ and  $b_{k+1} \ge a_{k+1}$. \par  
	Similar arguments show that the other cases hold (see again Lemma \ref{lem_representatives} and its proof).	
\end{proof}

Now, we are ready to prove the main result:

\begin{proof} (Theorem \ref{thm.perfect}) \noindent{\bf (i) - Continuity} \\
We start by proving  $\mfu_n \rightarrow \mfu$ in the Gromov-weak atomic topology implies $\F(\mfu_n) \rightarrow \F(\mfu)$ in the Skorohod topology, i.e. we prove that $\F$ is continuous. \par 
We apply Remark \ref{r.totalMassCont}, Proposition \ref{p.weak.atomic} and Corollary 3.3.2 in \cite{EK86} and get that $F_n \rightarrow F$ in the Skorohod topology, where $F_n(h):=\nu^{2,\mfu_n}([0,h])1(h \ge 0)$ and $F(h):=\nu^{2,\mfu}([0,h])1(h \ge 0)$. 
Take a sequence $h_n > 0$ such that $h_n \rightarrow h > 0$ and assume that 
$F_n(h_n) \rightarrow F(h)$. Let $\bar \e > 0$ and take an $\e > 0$ such that $|F(h+\e) - F(h)| < \bar \e$ and $h+\e$ is a continuity point of $F$.  
It follows that $|F_n(h_n) - F_n(h+\e)| < \bar \e$ for all $n$ large enough and hence, by Lemma \ref{lem_fnu}, for all $\bar \e > 0$ there is an $\e > 0$ such that 
\begin{equation}
\dSeq\Big(\f\big(\mfu_n,h_n\big),\f\big(\mfu_n,h+\e\big)\Big) < \bar \e 
\end{equation}
for all $n$ large enough. 
Since we can choose $\e$ such that $h + \e $ is a continuity point of $F$ and hence of $\f(\cdot,\mfu)$ (see Lemma \ref{l.connect.f.nu}), Lemma \ref{l.conv.contpoint} implies that for all $n$ large enough
\begin{equation}
\dSeq\Big(\f\big(\mfu_n,h_n\big),\f\big(\mfu,h+\e\big)\Big) < \bar \e. 
\end{equation}
Since $\f\big( \mfu,h+\e\big) \rightarrow \f\big(\mfu,h\big)$ for $\e \downarrow 0$, this gives 
\begin{equation}
\dSeq\Big(\f\big(\mfu_n,h_n\big),\f\big(\mfu,h\big)\Big) \rightarrow 0. 
\end{equation}
If $F_n(h_n) \rightarrow F(h-)$ we can use a similar argument to show that 
\begin{equation}
\dSeq\Big(\f\big(\mfu_n,h_n\big),\f\big(\mfu,h-\big)\Big) \rightarrow 0. 
\end{equation}

We can now apply Proposition 3.6.5 in \cite{EK86} (see also Section \ref{sec.Skorohod}) and get $\F(\mfu_n) \rightarrow \F(\mfu)$ in 
the Skorohod topology, when we can show that $0 \le t_n \rightarrow 0$ implies $\dMax(\f(\mfu_n,t_n),\f(\mfu,0)) \rightarrow 0$. \par 
By Lemma \ref{l.Gwa.embedding}, we may assume w.l.o.g. that $\mfu_n = [X,r,\mu_n]$, $\mfu = [X,r,\mu]$ and $\mu_n \rightarrow \mu$ in the 
weak atomic topology and applying Lemma 2.5 in \cite{EKatomic} gives $\dMax(\f(\mfu_n,0), \f(\mfu,0) ) \rightarrow 0$. 
Since $\nu^{2,\mfu}$ is purely atomic (see Remark \ref{rem.nu.atomic}) and again, by Lemma 2.5 in \cite{EKatomic}, it is not 
hard to see that $\nu^{2,\mfu_n}(0,t_n] \rightarrow 0$ (otherwise two atoms would merge in the limit which is prohibited in the weak atomic topology). 
Moreover, by Lemma \ref{lem_restr}, 
\begin{equation}
\dGP(\Cut_{t_n}(\mfu_n),\mfu) \le \dGP(\Cut_{t_n}(\mfu_n),\mfu_n) + \dGP(\mfu_n,\mfu) \rightarrow 0
\end{equation}
and 
\begin{equation}
\begin{split}
\nu^{2,\Cut_{t_n}(\mfu_n)}(\{0\}) &= \nu^{2,\mfu_n}([0,t_n]) \\
&= \nu^{2,\mfu_n}(\{0\})  +\nu^{2,\mfu_n}((0,t_n]) \rightarrow \nu^{2,\mfu}(\{0\}).
\end{split}
\end{equation}
Applying again Lemma \ref{l.Gwa.embedding} and the argument from above gives 
\begin{equation}
 \dMax(\f(\mfu_n,t_n), \f(\mfu,0) ) \rightarrow 0
\end{equation}
and the result follows. \\
\medskip

\noindent {\bf (i) - Perfectness} \\
Let $K \subset \F(\UM)$ be compact, then, because of the continuity of $\F$, $\F^{-1}(K)$ is closed and, as a direct consequence of Proposition \ref{thm_basic_tools}, $\F^{-1}(K)$ is relatively compact in the Gromov-weak 
topology. \par 
If we now take $\mfu_n \in \F^{-1}(K)$ this gives $\mfu_{n_k}\rightarrow \mfu \in \overline{\F^{-1}(K)}$ (where the closure is taken with respect to 
the Gromov-weak topology) along some subsequence (where we suppress this dependence in the following) in the Gromov-weak topology. 
Since $\F(\mfu_n) \in K$, $\{\F(\mfu_n):\ n \in \mathbb N\}$ is relatively compact in the Skorohod topology.
Moreover, since 
\begin{equation}
 \left|\nu^{2,\mfu_n}([0,h]) - \nu^{2,\mfu}([0,h]) \right| \le \dSeq(\f(\mfu_n,h),\f(\mfu,h)), \qquad \text{for all } h > 0,
\end{equation}
we can apply Theorem 3.6.3. in \cite{EK86} together with Proposition \ref{p.weak.atomic} (recall that continuous images of compact sets are compact) 
to get that $\{\nu^{2,\mfu_n}\big|_{[\delta,\infty)}:\ n \in \mathbb N\}$ is relatively compact in the weak atomic topology for all 
continuity points $\delta > 0$ of $\nu^{2,\mfu}([0,\cdot])$. It follows that $\nu^{2,\mfu_n}\big|_{[\delta,\infty)} \rightarrow \nu^{2,\mfu}\big|_{[\delta,\infty)}$
in the weak atomic topology and therefore $\Cut_{\delta}(\mfu_n) \rightarrow \Cut_{\delta}(\mfu)$  in the Gromov-weak atomic topology, 
for all such $\delta > 0$. Note that this also gives 
$\F(\Cut_{\delta}(\mfu_n)) = (\f(\mfu_n,h))_{h \ge \delta} \rightarrow  (\f(\mfu,h))_{h \ge \delta} $ and hence, since 
$\F(\mfu_n)$ is relatively compact, $\F(\mfu_n) \rightarrow \F(\mfu)$ in the Skorohod topology. \par 
It remains to prove that 
\begin{itemize}
\item[(i)] $\lim_{\delta \downarrow 0}\limsup_{n \rightarrow \infty }\nu^{2,\mfu_n}((0,\delta]) = 0$, 
\item[(ii)] $\nu^{2,\mfu_n}(\{0\}) \rightarrow \nu^{2,\mfu}(\{0\})$.
\end{itemize}
Note that the Portmanteau Theorem gives "(ii) $\Rightarrow $ (i)". \par 
In terms of Lemma \ref{l.Gwa.embedding}  we may assume w.l.o.g. that $\mfu_n = [X,r,\mu_n]$, $\mfu = [X,r,\mu]$ with $\mu_n \Rightarrow \mu$ and need to prove $\mu_n \rightarrow \mu$ in the weak atomic topology.  But by Lemma 2.5 in \cite{EKatomic}, this is implied by the convergence $\dMax(\f(\mfu_n,0),\f(\mfu,0)) \rightarrow 0$. \\
\medskip

\noindent {\bf (ii) - Results for the subspace}\\
We start by proving that $\F$ restricted to $\UMI$ is injective. Assume $\F(\mfu) = \F(\mfu')$ for $\mfu = [X,r,\mu],\mfu'=[X',r',\mu'] \in \UMI$ but $\mfu \neq \mfu'$. 
Since $\mfu \in \UMI$ iff $\Cut_h(\mfu) \in \UMI$ and $\mfu = \mfu'$ iff $\Cut_h(\mfu) \in \Cut_h(\mfu') $ for all $h > 0$, we assume w.l.o.g. that $\mfu \in \UMI \cap \UM^a$.
Using the idea in the proof of Theorem \ref{thm_polish} (ii) we may further assume that $\mu$ and $\mu'$ have only finitely many atoms
and we denote the atoms by $(a_i)_{i = 1,\ldots,m}$ and $(a_i')_{i = 1,\ldots,m'}$, $m,m' \in \mathbb N$. \par 
Let $0 =:t_0< t_1 < \ldots < t_n$ be the discontinuity points of $\f(\mfu,\cdot) = \f(\mfu',\cdot)$ and note that these points correspond 
exactly to the points $\{r(x,y):x,y \in\supp(\mu),\ x \neq y\}$. Moreover, since  $\f(\mfu,0) = \f(\mfu',0)$ we get (up to reordering) 
$(a_i)_{i = 1,\ldots,m} =(a_i')_{i = 1,\ldots,m'}$.  When we now apply Lemma \ref{l.Ielements}
we find for all $t_i$ pairwise disjoint sets $(A_{i}^k)_{k = 1,\ldots,K}$, where $K$ is the number of the non zero elements of $\f(\mfu,t_i)$ such that 
\begin{equation}
 \f(\mfu,t_i)_k = \sum_{j \in A_{i}^k} a_j,\qquad k = 1,\ldots,K
\end{equation}
and the choice of $(A_{i}^k)_{k = 1,\ldots,K}$ is unique. This uniqueness allows us to define the ultra-metric $\tilde r$ on $\{1,\ldots,m\}$ by
\begin{equation}
  \tilde r(x,y) = t_i \quad :\Longleftrightarrow \quad 
  \min\left\{j \in\{1,\ldots,n\}, \exists k \text{ s.t. }x,y \in A^k_j\right\} = i
\end{equation}
and we get
\begin{equation}
 \left[\{1,\ldots,m\},\tilde r,\ \sum_{i = 1}^m a_i \delta_i \right] = \mfu
\end{equation}
(compare also Remark \ref{rem.reconstruction}). Since all the quantities that where necessary for the above construction are completely determined by the values of $\f(\mfu,\cdot)$, we have $\mfu = \mfu'$ and get the 
injectivity of $\F\big|_\UMI$. \\

Since $\F\big|_\UMI$ is continuous, it remains to prove that $\F(\UMI)\ni f_n \rightarrow f \in \F(\UMI)$ implies 
$\mfu_n := \F^{-1}(f_n) \rightarrow \F^{-1}(f) =: \mfu$. \par 
Since  $\{f_n:\ n \in \mathbb N\}$ is relatively compact, we can apply part (i) of this theorem and 
get $\{\mfu_{n}: n \in \mathbb N\}$ is relatively compact and hence satisfies $\mfu_{n_k} \rightarrow \tilde \mfu \in \UM$ along some subsequence. It follows that 
\begin{equation}
 \F(\mfu_{n_k}) = f_{n_k} \rightarrow f = \F(\tilde \mfu) \in\F(\UMI).
\end{equation}
Since  $\mfu \in \UMI$ if and only if $\F(\mfu) \in \F(\UMI)$ (this is exactly the construction from above), we get  $\tilde \mfu = \F^{-1}(f)$.
\end{proof} 

\begin{remark}
 As we have seen in the proof of part (ii), the following holds:  $\mfu \in \UMI$ if and only if $\F(\mfu) \in \F(\UMI)$.
\end{remark}

\begin{proof}(Proof of  Theorem \ref{thm.subspace.homeomorphism})
We use the ideas of Theorem \ref{thm.perfect} (ii), take $\mfu=[X,r,\mu] \in \UMB \cap \UMI$ and assume $\mu$ has only finitely many atoms $(a_i)_{i = 1,\ldots,m}$. 
Denote the discontinuity points of $\nu^{2,\mfu}[0,\cdot]$ by $t_0:=0<t_1<\ldots<t_n$ and observe that, by Lemma \ref{l.connect.f.nu}, 
these points correspond to $\{r(x,y):x,y\in \supp(\mu),\ x\neq y\}$. In particular $m = n+1$. \par 

Take $\mfu'=[X',r',\mu'] \in \UMB \cap \UMI$ and assume $\nu^{2,\mfu} = \nu^{2,\mfu'}$, then, as in the proof of Theorem \ref{thm.perfect}, the set of discontinuity points of
$\nu^{2,\mfu'}[0,\cdot]$ coincides with $t_0:=0<t_1<\ldots<t_n$. Moreover, since $\nu^{2,\mfu}([0,t_n]) = \nu^{2,\mfu'}([0,t_n]) $ we get $\f(\mfu,t_n) = \f(\mfu',t_n)$. \par 
Assume now, that $\f(\mfu,t_k) = \f(\mfu',t_k)$ for all $k = n,n-1,\ldots,K+1$ and some $K \le  n-1$ and define $a^K_i:=\f(\mfu,t_{K})_i$, $\hat a^K_i:=\f(\mfu',t_{K})_i$.
If $\f(\mfu,t_{K}) \neq \f(\mfu',t_{K})$, then, by definition of $\UMB \cap \UMI$, there is a $i^\ast, i_1,i_2$ and $j^\ast, j_1,j_2$ such that 
\begin{equation}
 a^{K+1}_l = \left\{ \begin{array}{ll}
                      a^K_l,&\quad l < i^\ast,\\
                      a^K_{i_1} + a^k_{i_2},&\quad l = i^\ast,\\
                      a^K_{l-1},&\quad l \in \{i^\ast+1,\ldots,i_1\},\\
                      a^K_l,&\quad l \in\{i_1+1,\ldots,i_2-1\},\\
                      a^K_{l+1},&\quad l > i_2.
                     \end{array}\right.
\end{equation}
and 
\begin{equation}
\hat a^{K+1}_l = \left\{ \begin{array}{ll}
                      \hat a^K_l,&\quad l < j^\ast,\\
                      \hat a^K_{j_1} + a^k_{j_2},&\quad l = j^\ast,\\
                      \hat a^K_{l-1},&\quad l \in \{j^\ast+1,\ldots,j_1\},\\
                      \hat a^K_l,&\quad l \in\{j_1+1,\ldots,j_2-1\},\\
                      \hat a^K_{l+1},&\quad l > j_2.
                     \end{array}\right.
\end{equation}
and for a given $\mfu \in \UMB \cap \UMI$, $i^\ast,i_1,i_2$ are unique. By assumption, we therefore need to prove that $\{a_{i_1}^K,a_{i_2}^K\} = \{\hat a_{j_1}^K,\hat a_{j_2}^K\} $. \par 
If $i^\ast = j^\ast $, then, since $\nu^{2,\mfu} = \nu^{2,\mfu'}$,
\begin{equation}
 \begin{split}
  (I) & a^K_{i_1}+a^K_{i_2} = \hat a^K_{j_1}+\hat a^K_{j_2} \quad \Leftrightarrow \quad a^{K+1}_{i^\ast} = a^{K+1}_{j^\ast}, \\
  (II) & \left(a^K_{i_1}\right)^2+ \left(a^K_{i_2}\right)^2 + \left(a^{K+1}_{j^\ast}\right)^2 = \left(\hat a^K_{j_1}\right)^2+\left(\hat a^K_{j_2}\right)^2 + \left(a^{K+1}_{i^\ast}\right)^2 \\
 \end{split}
\end{equation}
and the result follows. Hence, we assume $i^\ast  \neq j^\ast $, (which implies $a^{K+1}_{i^\ast} \neq a^{K+1}_{j^\ast}$). 
Note that by Theorem \ref{thm.perfect} (ii), we have 
\begin{equation}
 \Cut_{t_{K+1}}(\mfu) = \Cut_{t_{K+1}}(\mfu') =:\left[\{1,\ldots,n-K\},r,\sum_{l = 1}^{n-K} a_l^{K+1}\delta_l\right].
\end{equation}
We assume w.l.o.g. that $i_1 = 1,i_2 = 2,j^\ast = 3$ and define the ultra-metric $\tilde r^1$ on $\{1,2,3\}$ by 
\begin{align}
 \tilde r^1(1,2) = t_{K+1}-t_k =:s^1,\ \quad \tilde r^1(1,3) = r(i^\ast,j^\ast) + t_{K+1}-t_k=:s^2.
\end{align}
On the other hand we assume, w.l.o.g. $j_1 = 4,j_2 = 5,i^\ast = 6$ and define the ultra-metric $\tilde r^2$ on $\{4,5,6\}$ by 
\begin{equation}
 \tilde r^2(4,5) = t_{K+1}-t_k =s^1,\ \quad \tilde r^2(4,6) = r(i^\ast,j^\ast) + t_{K+1}-t_k=s^2.
\end{equation}
By assumption $\nu^{2,\mfu^1} = \nu^{2,\mfu^2}$, where 
\begin{equation}
 \mfu^1 = \left[\{1,2,3\},\tilde r^1,a^{K}_{i_1} \delta_1 +a^{K}_{i_2} \delta_2 +a^{K+1}_{j^\ast} \delta_3  \right]
\end{equation}
and 
\begin{equation}
 \mfu^2 = \left[\{4,5,6\},\tilde r^2,a^{K}_{j_1} \delta_1 +a^{K}_{j_2} \delta_2 +a^{K+1}_{i^\ast} \delta_3  \right].
\end{equation}
For simplicity, set $x:=a^{K}_{i_1} $, $y:=a^{K}_{i_2} $, $z:=a^{K+1}_{j^\ast} $, then the following holds: 
\begin{equation}
 \begin{split}
  b_0:=\nu^{2,\mfu}(\{0\}) &= x^2+y^2+z^2,\\
  b_1:=\nu^{2,\mfu}([0,t_1]) &= (x+y)^2+z^2 \\
  b_2:=\nu^{2,\mfu}([0,t_2]) &= (x+y+z)^2.
 \end{split}
\end{equation}
 Now, up to reordering and with respect to the constraints on the $a_i$, there is only one possible solution to the above system of equations, given by
\begin{equation}
 \begin{split}
 x&:= \frac{\sqrt{2\sqrt{b_2}\cdot \sqrt{2b_1-b_2}+2b_1} + \sqrt{2\sqrt{b_2} \cdot \sqrt{2b_1-b_2}+8 b_0-6b_1}}{4},\\
 y&:= \frac{\sqrt{2\sqrt{b_2}\cdot \sqrt{2b_1-b_2}+2b_1} - \sqrt{2\sqrt{b_2} \cdot \sqrt{2b_1-b_2}+8 b_0-6b_1}}{4},\\
 z&:=\frac{\sqrt{b_2}-\sqrt{2b_1-b_2}}{2}
 \end{split}
\end{equation}
and the result follows. 
\end{proof}

\subsection{Proof of Proposition \ref{p.UMI.as}}

Note that by definition of $\UMI$, $\mfu \in \UMI$ if and only if $\Cut_h(\mfu) \in \UMI$ for all $h > 0$, $\Cut_h(\mfu) \in \UMI$ implies $\Cut_{h'}(\mfu)\in \UMI $
whenever $h' > h$. \par 
Fix an $h > 0$ and let $(a_i^h)_{i = 1,\ldots,N_h} = \f(\mathcal U_t,h)$. Since $(a_i^h)_{i = 1,\ldots,N_h}$ are, conditioned on $N_h$, independent 
and $\mathcal L(a_i^h)$ are absolutely continuous with respect to the Lebesgue measure for all $i$, we get 
\begin{equation}
\begin{split}
 P&\left(\exists I \subset\{1,\ldots,N_h\}:\ \sum_{i \in I} a_i = x\Big|N_h\right) \\
&=  P\left(\exists I \subset\{1,\ldots,N_h\}:\ a_{\min(I)} = x- S_I\Big|N_h\right) = 0,
\end{split}
\end{equation}
for all $x \in \mathbb R$, where $S_I = \sum_{i \in I\setminus\{\min(I)\}} a_i$. We can now apply Lemma \ref{l.Ielements} and get

\begin{equation}
 \begin{split}
  P\left(\mathcal U_t \in \UMI\right) &= P\left(\Cut_h(\mathcal U_t) \in \UMI,\ \forall h > 0\right) \\
	&= \lim_{h \downarrow 0} P\left(\Cut_h(\mathcal U_t) \in \UMI\right) \\
	&= 1-\lim_{h \downarrow 0}P\left(\Cut_h(\mathcal U_t) \not\in \UMI\right) \\
	&\ge 1- \lim_{h \downarrow 0} E\left[ E\left[ 1\left (\mathcal L\left((a_i^h)_{i = 1,\ldots,N_h}\right) \not\ll \lambda^{\otimes N_h}\right) \Big|N_h\right] \right]\\
  &= 1.
 \end{split}
\end{equation}

\section{Proof of Theorem \ref{thm_polish} (c)}\label{sec.p.polish.c}
\def\NUM{\mathcal N}
\def\Num{\mathfrak{n}}
As we have seen in Remark \ref{r.num.lower}, the number of balls map has some nice properties.
In fact these properties allow us to proof our result and we start by formalizing this remark.  

\begin{definition}
Define the number of balls map $\NUM:\UMc \to D((0,\infty),\mathbb N)$, $\mfu \mapsto (h \mapsto \min\{i \in \mathbb N: \f(\mfu,h)_{i +1} = 0\})$ and $\Num(\f(\mfu,h)) := \NUM(\mfu)_h$. 
\end{definition}

\begin{remark}
(1) In terms of Lemma \ref{lem_representatives}, $\NUM(\mfu)_h = \Num(\f(\mfu,h))  = \num(h)$. \par
(2) Using Lemma \ref{l.F.cadlag}, it is not hard to see that the range of the map is a subset of the Skorohod space. \par 
(3) Since $\mfu \in \UMc$, $\NUM(\mfu)_h < \infty$ for all $h > 0$.  
\end{remark}

\begin{lemma}
$\mfu \in \UMB\cap \UMc$ if and only if $\mfu \in \UMc$ and $\Num(\f(\mfu,h-))-\Num(\f(\mfu,h)) \le 1$ for all $h > 0$. 
\end{lemma}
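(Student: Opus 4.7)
The plan is to start by pinning down, for any fixed $h > 0$, the arithmetic identity
\begin{equation*}
  \Num(\f(\mfu,h-)) - \Num(\f(\mfu,h)) \;=\; \sum_{i=1}^{\num(h)}(m_i - 1),
\end{equation*}
where $m_i$ is the number of maximal sub-balls of radius strictly less than $h$ inside the $i$-th depth-$h$ ball $\bar B(\rep_i^h,h)$. This identity follows from Lemma \ref{lem_representatives} (used once with closed balls of radius $h$ and once with open balls of radius $h$, via Remark \ref{rem_pos}(ii)), together with Remark \ref{rem_pos}(i), which guarantees that every such sub-ball carries strictly positive $\mu$-mass.

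For the forward direction I would suppose $\mfu \in \UMB \cap \UMc$ and argue by contradiction: if the right-hand side exceeds $1$ at some $h > 0$, then either some ball satisfies $m_{i^\ast} \ge 3$, or two distinct balls satisfy $m_{i_1}, m_{i_2} \ge 2$. In the first case, choosing $x_1 \in U_1$, $x_2 \in U_2$, $x_3 \in U_2$, $x_4 \in U_3$ from three distinct sub-balls of $\bar B(\rep_{i^\ast}^h,h)$ gives $r(x_1,x_2) = r(x_3,x_4) = h$ while $\{x_1,x_2\} \neq \{x_3,x_4\}$ as unordered pairs, since $x_1 \in U_1$ is disjoint from $U_2 \cup U_3$. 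In the second case, one crossing pair inside each of the disjoint parent balls $\bar B(\rep_{i_1}^h,h)$ and $\bar B(\rep_{i_2}^h,h)$ produces an analogous violation with the pairs lying in disjoint ancestor balls. Either construction yields a measurable set of quadruples of $\mu^{\otimes 4}$-mass bounded below by a product of strictly positive sub-ball masses, contradicting the binary property.

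For the reverse direction, assume the $\Num$-hypothesis. By Remark \ref{rem.nu.atomic} the measure $\nu^{2,\mfu}$ is purely atomic with atoms located on the discontinuity set $\{h_j\}_j$ of $\f(\mfu,\cdot)$ (Lemma \ref{l.connect.f.nu}); at each $h_j$ the hypothesis forces exactly one index $i(j)$ with $m_{i(j)} = 2$ and every other $m_i = 1$, so the set of pairs at distance $h_j$ is, up to a $\mu^{\otimes 2}$-null set, precisely $(U_j \times V_j) \cup (V_j \times U_j)$. I would then decompose the ``bad set'' from the definition of $\UMB$ according to the common value $r(x_1,x_2) = r(x_3,x_4)$, obtaining a countable sum over the $h_j$, and verify vanishing of each summand by iterating the $\Num$-hypothesis inside $U_j$ and $V_j$ through the $h$-top decomposition of Section \ref{sec.concatenation}.

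The main obstacle will be this last step: turning the ball-level constraint into a genuinely $\mu$-almost-everywhere statement about quadruples. Concretely, on the fiber $\{r(x_1,x_2) = h_j\}$ one must show that $\{x_1,x_2\} = \{x_3,x_4\}$ $\mu^{\otimes 4}$-almost everywhere, and this requires propagating the $\Num$-condition recursively into the internal tree structure of $U_j$ and $V_j$ at depths below $h_j$. Compactness of $\supp(\mu)$ keeps the tree above any positive depth discrete, so the recursion terminates at a level where the binary non-simultaneous structure can be read off directly, closing the equivalence.
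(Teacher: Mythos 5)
Your forward direction is correct and in fact more careful than the paper's own argument: from a drop of size $\ge 2$ the paper only extracts a single closed $h$-ball containing three open $h$-balls of positive mass, i.e.\ it silently skips your second case (two disjoint closed $h$-balls each splitting into two open ones), which also has to be excluded and which your crossing-pair construction handles correctly. The identity $\Num(\f(\mfu,h-))-\Num(\f(\mfu,h))=\sum_{i}(m_i-1)$ is exactly the right bookkeeping and is justified by Lemma \ref{lem_representatives} and Remark \ref{rem_pos} as you indicate.

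The genuine gap is in the reverse direction, and it is not one your proposed recursion can close. Once the pairs at distance $h_j$ are identified, up to a null set, with $(U_j\times V_j)\cup(V_j\times U_j)$, Definition \ref{def.mms}(6) read literally demands that for $\mu^{\otimes 4}$-almost all $x_1,x_3\in U_j$ and $x_2,x_4\in V_j$ one has $x_1=x_3$ and $x_2=x_4$; this forces $U_j$ and $V_j$ to be single atoms, which the $\Num$-hypothesis does not give. Iterating that hypothesis inside $U_j$ and $V_j$ only controls mergers at depths below $h_j$ within each set separately; it says nothing about the diagonal concentration of $\mu|_{U_j}\otimes\mu|_{U_j}$. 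Concretely, the three-atom comb with $r(x_1,x_2)=\tfrac12$, $r(x_1,x_3)=r(x_2,x_3)=1$ and unit masses satisfies the $\Num$-condition at every $h$, yet the quadruple $(x_1,x_3,x_2,x_3)$ violates the literal binary condition on a set of positive $\mu^{\otimes 4}$-measure. So either $\UMB$ must be understood as a condition on branch points (at each height at most one pair of balls merges, each merger binary), in which case the reverse implication follows immediately from your opening identity and no descent into the subtrees is needed, or the ``if'' direction fails as stated. For comparison, the paper's own proof addresses only the forward direction and is silent on the converse, so you are not missing a hidden argument from the paper; but your closing paragraph promises a recursion that cannot deliver the statement in its literal form.
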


\begin{proof}
Assume that $\mfu = [X,r,\mu]$ and that there is an $h > 0$ such that $\Num(\f(\mfu,h-))-\Num(\f(\mfu,h))  \ge 2$. Then, by the argument in the second part of Lemma \ref{lem_representatives}, there  are at least three points $x,y,z \in X$ such that 
\begin{equation}
\min(r(x,y),r(x,z),r(y,z)) \ge h
\end{equation}
and a $\tau \in X$ such that 
\begin{align}
\mu(\bar B(\tau,h) \cap B(x,h)) &= \mu(B(x,h)), \\
\mu(\bar B(\tau,h) \cap B(y,h)) &= \mu(B(y,h)), \\
\mu(\bar B(\tau,h) \cap B(z,h)) &= \mu(B(z,h)), 
\end{align}
which contradicts the definition of $\UMB$. 
\end{proof}

\begin{proof} (Theorem \ref{thm_polish} (c))
Let $\mfu_n \in \UMB\cap \UMc$ with $\mfu_n \rightarrow \mfu \in \UMc$ in the Gromov-weak atomic topology. Then $\F(\mfu_n)\rightarrow \F(\mfu)$, by Theorem \ref{thm.perfect},  and hence $\f(\mfu_n,\delta) \rightarrow \f(\mfu,\delta)$ with respect to $\dSeq$ for all continuity points $\delta > 0$ of $\f(\mfu,\cdot)$. Therefore, we can find for all $\eps > 0$ a $K \in \mathbb N$ such that 
\begin{equation}
\sup_{n \in \mathbb N} \sum_{i \ge K}\f(\mfu_n,\delta)_i \le \eps.  
\end{equation}
Now, as we have seen in the proof of Lemma \ref{l.conv.contpoint}, for all $\eps > 0$ small enough and all continuity points $h > 0$ of $\f(\mfu,\cdot)$, there is a $N \in \mathbb N$ such that $\Num(\f(\mfu_n,h)^\eps) \equiv \Num(\f(\mfu,h))$ for all $n \ge N$, where 
\begin{equation}
\f(\mfu_n,\delta)^\eps_i :=  \f(\mfu_n,\delta)_i \mathds{1}(\f(\mfu_n,\delta)_i > \eps),\qquad i \in \mathbb N.
\end{equation}
Finally observe that by the convergence in the Skorohod topology and since $(\f(\mfu,h))_{h \ge \delta}$ has only finitely many jumps for all $\mfu \in \UMc$ and $\delta > 0$, two jumps of size larger than $\eps$ (with respect to $\dSeq$) are uniformly separated, say by $\eta$ (see Theorem 3.6.3 in \cite{EK86}), and therefore, 
\begin{equation}
\begin{split}
\Num(\f(\mfu,h-)) - \Num(\f(\mfu,h)) &= \Num(\f(\mfu,h-\eta/3))- \Num(\f(\mfu,h+\eta/3)) \\
&= \Num(\f(\mfu_n,h-\eta/3)^\eps) - \Num(\f(\mfu_n,h+\eta/3)^\eps) \\
&\le 1.
\end{split}
\end{equation}
for all $n$ large enough and all $h > 0$, where we assumed that $\eps$ is small enough and that $h \pm \eta/3$ is a continuity point of $\f(\mfu,\cdot)$. 
\end{proof}

\section{Proofs for Section \ref{sec.FV.result}}

We start by proving that the convergence of the marginals of the tree-valued Moran model to the tree-valued Fleming-Viot process 
also holds, when the space $\UM$ is equipped with the Gromov-weak atomic topology. Then we apply this result to prove our main result. 

\subsection{Proof of Proposition \ref{p.convMM}}

In order to see that $\mathcal U^N_t \Rightarrow \U_t$, when $\UM$ is equipped with the Gromov-weak atomic topology, we will prove:

\begin{lemma}\label{lem.convMM}
 Under the assumptions of Proposition \ref{p.convMM} one has weak convergence of  $\F(\mathcal U^N_t)$ for all $t \ge 0$. 
\end{lemma}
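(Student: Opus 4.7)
The plan is to establish $\F(\U_t^N)\Rightarrow\F(\U_t)$ in the Skorohod topology on $D((0,\infty),\mathcal S^\downarrow)$ by combining convergence of finite-dimensional marginals at deterministic continuity points with a tightness argument, starting from the Gromov-weak convergence $\U_t^N\Rightarrow\U_t$ provided by \cite{GPW13}. The continuous mapping theorem cannot be applied directly because, by Theorem \ref{thm.perfect}, $\F$ is only continuous in the finer Gromov-weak atomic topology.

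For the finite-dimensional marginals I would argue as follows. Since $E[\nu^{2,\U_t}]$ is a finite measure on $[0,\infty)$, it has at most countably many atoms, and by Fubini $P(\nu^{2,\U_t}(\{h\})>0)=0$ for all but countably many $h$; by Lemma \ref{l.connect.f.nu} these are exactly the $h$ for which $\f(\U_t,\cdot)$ is almost surely continuous at $h$. Choose any finite collection $h_1,\ldots,h_k$ from this co-countable set. Using the Skorohod representation to realise $\U_t^N\to\U_t$ almost surely in Gromov-weak, Lemma \ref{l.conv.contpoint} yields $\dSeq(\f(\U_t^N,h_i),\f(\U_t,h_i))\to 0$ almost surely for each $i$, and hence joint convergence in distribution of the tuple $(\f(\U_t^N,h_1),\ldots,\f(\U_t^N,h_k))$.

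For tightness in $D([\delta,H],\mathcal S^\downarrow)$ on each compact subinterval $[\delta,H]\subset(0,\infty)$ I would verify the two standard ingredients. Compact containment of $\f(\U_t^N,h)$ in $(\mathcal S^\downarrow_1,\dSeq)$ reduces via Remark \ref{rem.compact.S} to the uniform tail estimate $\lim_{M\to\infty}\sup_N P\bigl(\sum_{i\ge M}\f(\U_t^N,\delta)_i>\eps\bigr)=0$, which follows from the convergence of the $N$-particle Kingman coalescent block frequencies at the fixed positive depth $\delta$ to Kingman's mass coalescent together with the fact that the latter has only finitely many blocks above any positive threshold almost surely. For the modulus of continuity, $h\mapsto\f(\U_t^N,h)$ is a pure-jump cadlag step process whose $\dSeq$-jumps have size exactly twice the mass of the smaller merging family; so the number of jumps of size at least $\eps$ in $[\delta,H]$ is bounded, uniformly in $N$, by the count of ancestors of mass exceeding $\eps/2$ at depth $\delta$, which is again controlled by the same tail estimate.

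Combining these two ingredients, every subsequential limit of $(\F(\U_t^N))_N$ agrees with $\F(\U_t)$ on a dense set of continuity points and hence, by right continuity, everywhere. The main obstacle is the tightness step, where the uniform-in-$N$ tail bound on the Kingman coalescent block frequencies and the uniform control of the number of macroscopic merging events are the essential quantitative inputs; for these, exchangeability of the Moran model and the classical convergence of its coalescent to Kingman's coalescent are the key tools.
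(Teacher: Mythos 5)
Your overall strategy (finite-dimensional convergence at almost-sure continuity points via Lemma \ref{l.conv.contpoint}, plus tightness on compacts) is a legitimate alternative to the paper's route, which instead couples all Kingman $N$-coalescents as restrictions of a single Kingman coalescent and obtains \emph{almost sure} Skorohod convergence of $(\f(\U^N_t,h))_{\delta\le h\le t}$ pathwise, treats $h\ge t$ through the initial condition, and then handles $h\downarrow 0$. The genuine gap in your proposal is exactly that last regime. The target space is $D((0,\infty),\mathcal S^\downarrow)$ with the topology of Appendix \ref{sec.Skorohod}, whose metric contains the summand $\dSKMax$ with weight $e^{-u}$ that does not degenerate as $u\downarrow 0$; by the characterization there (condition (a) with $t=0$), convergence forces $\dMax(f_n(t_n),f(0))\to 0$ for every sequence $t_n\downarrow 0$. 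Tightness on compact subintervals $[\delta,H]\subset(0,\infty)$ therefore does not suffice: one must additionally prove a uniform-in-$N$ estimate of the form
\begin{equation*}
\limsup_{N\to\infty} P\Big(\sup_{h\in[0,\delta]}\dMax\big(\f(\U^N_t,h),\f(\U^N_t,0)\big)\ge\eps\Big)\le\eps
\end{equation*}
for small $\delta$. This is the content of the final third of the paper's proof and uses that $\f(\U^N_t,0)_1=1/N$ while $\f(\U^N_t,\delta)_1$ converges to the largest Kingman block frequency at time $\delta$, which is small for $\delta$ small (uniform-on-the-simplex representation). Your proposal never addresses this regime, and without it the claimed weak convergence in $D((0,\infty),\mathcal S^\downarrow)$ is not established.

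Two smaller points in the tightness step. First, the jump bound is stated incorrectly: family masses $\mu(\bar B_h(x))$ are nondecreasing in $h$, so a merger at depth $h^\ast>\delta$ of two families each of mass at least $\eps/2$ is \emph{not} controlled by counting "ancestors of mass exceeding $\eps/2$ at depth $\delta$". The correct count is that such mergers are branch points of the subtree of families of mass at least $\eps/2$, whose leaves are disjoint and each carry mass at least $\eps/2$, so there are at most $2/\eps$ of them. Second, bounding the number of jumps of $\dSeq$-size at least $\eps$ does not by itself control the Skorohod modulus $w'$: between two large jumps the path can oscillate through many small jumps whose $\ell^1$-sizes accumulate (for a balanced family tree the total variation grows like $\log N$). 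You also need that the merger \emph{times} do not accumulate, e.g.\ that the number of mergers in $[\delta,H]$ is tight uniformly in $N$ and their minimal spacing is bounded below in probability --- which is precisely what the paper's coupling to a single Kingman coalescent (so that all $N$-paths jump only at the fixed, a.s.\ distinct merger times of the limiting coalescent) delivers for free.
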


Once we have shown this Lemma, we can apply Theorem \ref{thm.perfect} to get relative compactness and hence convergence of $\U_t^N$ to $\U_t$, that is
Proposition \ref{p.convMM}.

\begin{remark}
 Note that the above implies $\F(\mathcal U^N_t) \Rightarrow \F(\mathcal U_t)$.
\end{remark}

In order to prove Lemma \ref{lem.convMM} we need the following.

\subsubsection{Connection to the Kingman-coalescent}\label{sec.ConKing}

Here we give the connection of the tree-valued Moran model and the Kingman-coalescent (see the proof of Proposition 6.15 in \cite{grieshammer2016}). 
For the definition and properties of the Kingman-coalescent we refer to \cite{B} and \cite{Ber}. \par  

For fixed $t \ge 0$, we set $A_h(i):=A_{t-h}(i,t)$ (see \eqref{eq.ancestors}), $0 \le h \le t$ and $[N]:= \{1,\ldots,N\}$. 
Then $\{A_h(i):\ i \in [N]\}$ can be described as a family of processes in $[N]^N$ that starts in $A_0(i) = i$ and has the following dynamic: 
Whenever $\eta^{i,j}(\{t-h\}) = 1$ for some $i,j \in [N]= I_N$ we have the following transition: 
\begin{equation}
A_{h-}(k) \rightarrow  A_{h}(k) = i, \qquad \forall k \in \{l \in [n]: A_{h-}(l) = j\}.
\end{equation}
It is now straightforward to see that the time it takes to decrease the number of different labels, $|\{A_h(i):\ i \in [N]\}|$, by $1$, given there are $k$ different labels, 
is exponential distributed with parameter $\binom{k}{2}$ and that the two labels (the one that replaces and the one that is replaced) are sampled uniformly 
without replacement under all existing labels. If we define  
\begin{equation}
\kappa_i(h) = \{j\in [N]:\ A_{h}(j) = A_{h}(i)\},
\end{equation}
this implies $\kappa^N = (\{\kappa_1(h),\ldots,\kappa_N(h)\})_{0\le h \le t}$ is a Kingman $N$-coalescent (up to time $t$). 
If we now set 
\begin{equation}
\mathcal V_t^N = \left[\{1,\ldots,N\},r^\kappa_t,\frac{1}{N}\sum_{k = 1}^N \delta_k\right],
\end{equation}
where 
\begin{equation}
r^\kappa_t(i,j) = \left\{\begin{array}{ll}
\inf\{h \ge 0|\ \exists k: i,j \in \kappa_k^N(h)\},& \text{if } \exists k: \ i,j \in \kappa_k^N(t-),\\
r_0(i,j),&\text{otherwise},
\end{array}\right.
\end{equation}
and $r_0$ is the ultra-metric given in the definition of $\mathcal U_t^N$ (see \eqref{eq.ultrametric}), then the above implies 
\begin{lemma}
 $\mathcal L(\mathcal V_t^N) = \mathcal L(\mathcal U_t^{N})$.
\end{lemma}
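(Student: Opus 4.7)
The plan is to identify $\mathcal U_t^N$ and $\mathcal V_t^N$ as two equal-in-law mm-spaces by exhibiting, under the common graphical construction that drives both, a measure-preserving isometric relabeling between them. Since the Poisson family $\{\eta^{i,j}\}$ underlying the Moran model is the same one from which the coalescent $\kappa^N$ is read off, most of the required structure matches pointwise; only the case of pairs that have not coalesced by time $0$ requires a genuine probabilistic argument.

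First I would unravel the definitions for pairs $i\neq j$ sharing an ancestor in $[0,t]$. Using $A_h(i)=A_{t-h}(i,t)$ and the substitution $s=t-h$, one obtains
\begin{equation}
\inf\{h\ge 0:\exists k,\ i,j\in\kappa_k(h)\}=t-\sup\{s\in[0,t]:A_s(i,t)=A_s(j,t)\},
\end{equation}
so $r^\kappa_t(i,j)=r_t(i,j)$ identically on such pairs; moreover the event $\{\exists k:i,j\in\kappa_k(t-)\}$ coincides with $\{A_0(i,t)=A_0(j,t)\}$. When $r_0$ is a strict ultra-metric, this also shows that the pseudo-metric $r_t$ is strict, so the equivalence classes of $\approx_t$ are singletons and $\bar I_N^t$ can be identified with $[N]$.

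The main obstacle is the case of pairs $i,j$ in distinct blocks of $\kappa^N(t-)$, where the two formulas literally differ: $r_t(i,j)=t+r_0(A_0(i,t),A_0(j,t))$ uses $r_0$ at the ancestor labels while $r^\kappa_t(i,j)=r_0(i,j)$ uses $r_0$ at the leaf labels themselves. The plan here is to exploit the $S_N$-invariance of the Moran graphical construction: relabeling $[N]$ by any permutation $\sigma$ yields an equal-in-law driving Poisson family. This symmetry implies that, conditional on the unordered partition process $(\{\kappa_k(h)\})_{0\le h\le t}$ (which has the Kingman $N$-coalescent distribution), the ``ancestor labeling'' $\kappa_k(t-)\mapsto a_k:=A_0(\cdot,t)|_{\kappa_k(t-)}$ is a uniformly distributed injection of the $M(t-)$ blocks into $[N]$, independent of the internal merge structure. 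Composing this uniform injection with the canonical block-to-leaf-label identification built into $\mathcal V_t^N$ provides a coupling under which $r_t$ and $r^\kappa_t$ agree on between-block pairs as well.

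Combining both steps yields equality of the mm-space laws, since the uniform measure $\frac{1}{N}\sum_k\delta_k$ is preserved by any bijection of $[N]$. The conceptually hardest point is establishing that, conditional on the unordered coalescent tree, the ancestor labels form the claimed uniform injection; this follows from a direct exchangeability calculation using the symmetry of the resampling events under label permutations, but one must be careful about the interplay between forward-time ``leaf'' labels and backward-time ``ancestor'' labels, which are linked non-trivially through the Moran dynamics.
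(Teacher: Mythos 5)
Your overall route is the paper's own: read the backward ancestral partition process off the same Poisson system, identify it with a Kingman $N$-coalescent, note the pointwise identity $r_t^\kappa(i,j)=r_t(i,j)$ for pairs coalescing before backward time $t$ via the substitution $s=t-h$, and then handle the cross-block distances by a relabeling/exchangeability argument. The paper states the lemma with essentially no proof beyond the coalescent identification, so the cross-block step is exactly where the work lies --- and that is where your argument has two genuine gaps.

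First, your central claim --- that conditionally on the unordered partition process the ancestor labeling is a uniform injection of the blocks into $[N]$, independent of the merge structure --- is false. On the event that no resampling event occurs in $[0,t]$, every block is a singleton sitting at its own label, so the injection is a.s.\ the identity rather than uniform; more generally, blocks can relocate to vacated labels without merging, and the $S_N$-symmetry of the construction only yields invariance of the \emph{joint} law of (partition, ancestor labels) under simultaneous relabeling, not conditional uniformity given a fixed partition path. Fortunately the claim is also unnecessary: under the standing initial condition of Proposition \ref{p.convMM} one has $r_0(i,j)=r_0(V_i,V_j)$ with $(V_k)_k$ i.i.d.\ uniform and independent of the Poisson system, so for \emph{any} injection $(a_1,\dots,a_M)$ of the blocks into $[N]$ measurable with respect to the graphical construction, $(r_0(V_{a_k},V_{a_l}))_{k<l}$ has the same law as $(r_0(V_{i_k},V_{i_l}))_{k<l}$ for any fixed choice of block representatives $i_k$. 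This independence of the i.i.d.\ marks from the driving noise, not uniformity of the injection, is what the paper implicitly leans on (compare the identity $\mu_t^N\stackrel{d}{=}\sum_i\f(\U_t^N,t)_i\delta_{V_i}$ used later in the proof of Lemma \ref{lem.convMM}). Second, even granting a perfect relabeling you conclude that $r_t$ and $r_t^\kappa$ ``agree'' on cross-block pairs, but $r_t(i,j)=t+r_0(A_0(i,t),A_0(j,t))$ while $r_t^\kappa(i,j)=r_0(i,j)$: no relabeling removes the additive constant $t$, and moreover $r_0(i,j)$ is not constant as $i$ ranges over a block (whereas $r_t(i,j)$ is), so $r_t^\kappa$ as literally written need not even be an ultra-metric. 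You must either flag this as a misprint in the definition of $r_t^\kappa$ --- the second branch has to be read as $t+r_0(i_B,i_{B'})$ evaluated at one representative per block --- or the claimed coupling identity fails. As written, your proof asserts agreement of two quantities that differ by $t$, and rests on a conditional-uniformity statement that a two-individual example already refutes.
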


Now, note that the Kingman-coalescent $\kappa = (\kappa_t)_{t \ge 0}$ satisfies the consistency relation: 
\begin{itemize}
 \item[(C)] $\kappa\big|_N := (\kappa(t)\big|_N)_{t \ge 0}$ is a Kingman $N$-coalescent started from $\kappa(0)\big|_N$,  where for $\pi = \{\pi_1,\pi_2,\ldots\} \in \mathcal P(\mathbb N)$ (set of partitions of $\mathbb N$)
 we define $\pi\big|_N \in \mathcal P([N])$ (set of partitions of $[N]$) as the element induced by $\pi_1\cap[N],\pi_2\cap[N],\ldots$. 
\end{itemize}
Note further, that the law of the Kingman-coalescent is determined by this property and the initial configuration. 
As a consequence, we may assume: 
\begin{assumption}
Our processes are defined on a probability space, where the Kingman $N$-coalescents are coupled (for different $N$) 
in such a way that they are restrictions of an underlying Kingman coalescent 
$\kappa = (\kappa_t)_{t \ge 0}$, i.e. we assume that $(\kappa^N)_{N \in \mathbb N} :=(\kappa\big|_N )_{N \in \mathbb N}$.
\end{assumption}

\subsubsection{Proof of Lemma \ref{lem.convMM}}\label{sec.conn.King}

First note that by the construction in \ref{sec.ConKing} we have that $\f(\U^N_t,h)_{0 \le h \le t}$ is given by the decreasing reordering 
of 
\begin{equation}
\left(\frac{1}{N} |\kappa^N_i(h)|\right)_{i \in [N]} = \left(\frac{1}{N} |\kappa_i(h)\cap [N]|\right)_{i \in \mathbb N},
\end{equation}
where $\kappa^N(0) = \{\{1\},\ldots,\{N\}\}$ and $|\cdot |$ denotes the number of elements in $\cdot$. 
Next, we define for $A \subset \mathbb N$
\begin{equation}
 |A|_f := \lim_{N \rightarrow \infty}\frac{|A \cap [N] |}{N},
\end{equation}
if it exists, and call $|A|_f$ in this case the {\it asymptotic frequency} of $A$. 

\begin{proof} (Lemma \ref{lem.convMM}) We first note that the Kingman-coalescent at a time $t> 0$ forms an exchangeable random partition and therefore possesses 
assymptotic frequency almost surely (see \cite{B}). That is, when we denote by $(\tau_k)_{k \in \mathbb N}$ the first times of the Kingman-coalescent to  have 
$k$ blocks, i.e. $\tau_k := \inf\{t > 0: |\kappa(t)| = k\}$, then $|K_i(k)|_f:= |\kappa_i(\tau_k)|_f$ exists for all $i \in \mathbb N$ almost surely. 
We denote by $K^\downarrow (k)$ the decreasing reordering of the block frequencies $(|K_i(k)|_f)_{i = 1,\ldots,k}$ and  
define $f:(0,\infty) \to \mathcal S^\downarrow$ by 
\begin{equation}
 f(t):= \left\{\begin{array}{ll}
                K^\downarrow (1),&\ \text{when } t \ge \tau_1,\\
                K^\downarrow (k),&\ \text{when } t \in [\tau_{k},\tau_{k-1}),\ k \ge 2.
               \end{array}\right.
\end{equation}
Note that $\f(\U^N_t,t) = \f(\U^N_t,t-)$ almost surely, hence, by construction, $f \in D((0,\infty), \mathcal S^\downarrow)$ and 
\begin{equation}
 (\f(\U^N_t,h))_{\delta \le h \le t} \rightarrow (f(h))_{\delta \le h \le t} 
\end{equation}
in the Skorohod topology almost surely for all $\delta > 0$, where $\mathcal S^\downarrow $ is equipped with $\dSeq$ (recall that the Kingman-coalescent comes down from infinity, i.e. 
there are only finitely many elements of $f(h)$ that are non-zero). 

Moreover, by definition, $\Cut_t(\U_t^N) = [[0,1],r_0,\mu_t^N]$ and $\Cut_t(\U_t) = [[0,1],r_0,\mu_t]$, where 
 \begin{equation}
  \mu_t^N \stackrel{d}{=} \sum_{i \in \mathbb N} \f(\U_t^N,t)_i \delta_{V_i}
 \end{equation}
 and 
  \begin{equation}
  \mu_t \stackrel{d}{=} \sum_{i \in \mathbb N} f(t)_i \delta_{V_i}
 \end{equation}

and  $\mu_t^N \Rightarrow \mu_t$. By the coupling of the Kingman-coalescent and the Kingman-$N$-coalescent, we know further, 
that for $N$ large enough, the number of non-zero entries of $\f(\U_t^N,t)$ (which corresponds to the number of blocks in a 
Kingman-$N$-coalescent) equals the number of non-zero entries of $f(t)$. Combining this observation with the fact that 
$\sum_{i \in I} \f(\U^N_t,t)_i \rightarrow \sum_{i \in I} f(t)_i$, for all $I \subset \mathbb N$, $|I| < \infty$, and 
Lemma \ref{lem_representatives}, we get 
\begin{equation}
 \f(\U^N_t,h)_{h \ge t} \Rightarrow f(h)_{h \ge t}.
\end{equation}

Finally observe that for $\eps > 0$ and $\delta > 0$,
\begin{equation}
\begin{split}
 \limsup_{N \rightarrow \infty}  P&\left(\sup_{h \in [0,\delta] } \dMax(\f(\U^N_t,h), \f(\U^N_t,0)) \ge \eps \right) \\
 &=  \limsup_{N \rightarrow \infty}  P\left(\left| \f(\U^N_t,\delta)_1 - \frac{1}{N}\right|  \ge \eps \right) \\
 &\le   \limsup_{N \rightarrow \infty}  P\left(\f(\U^N_t,\delta)_1  \ge \eps \right) \\
 &\le  P\left(f(\delta)_1 \ge \eps \right). 
\end{split}
\end{equation}
Note that $K^\downarrow (n)$ is the decreasing rearrangement of a random variable that is uniform distributed on the simplex (see again \cite{B}), i.e. 
 $K^\downarrow (n) = \max(X_1,\ldots,X_n)/\sum_{i = 1}^n X_i$, where $X_1,\ldots,X_n$ are independent exponential-$1$-distributed. We can now apply
the strong law of large numbers together with a simple calculation to show that for $\delta > 0$ small enough
\begin{equation}
\begin{split}
 \limsup_{N \rightarrow \infty}  P&\left(\sup_{h \in [0,\delta] } \dMax(\f(\U^N_t,h), \f(\U^N_t,0)) \ge \eps \right) \le \eps
\end{split}
\end{equation}
(in fact one could prove that $E\left[f(\delta)_1\right]= \frac{1}{k}\sum_{l = 1}^k \frac{1}{l}$ - see for example \cite{simplex}).
\end{proof}

\subsection{Proof of Theorem \ref{thm.FV.neutral}}

Note that the tree-valued Fleming-Viot process
$(\mathcal U_t)_{t \ge 0}$ takes values in the space of compact ultra-metric measure spaces $\UMc$ for all $t > 0$ (see Proposition 2.11 in \cite{GPW13}).
Fix a $t > 0$. It follows that for all $h > 0$ there is an almost surely finite random variable $N_h \in \mathbb N$ such that $\Cut_h(\mathcal U_t) = [X,r,\mu_h]$ satisfies $|\{x\in X| \mu_h(\{x\}) > 0\}| = N_h$ 
almost surely.  Since the reordering of atoms of $\mu_h$ is distributed as the 
decreasing rearrangement of a random variable that is uniformly distributed on the simplex (see \cite{B}; compare also the proof of Lemma \ref{lem.convMM}), 
the result follows by Proposition \ref{p.UMI.as}, once we have shown that $\U_t \in \UMB$ almost surely.  But, since $\U^N_t \in \UMB$ almost surely (recall Section \ref{sec.conn.King}), 
$\UMB$ is closed in the Gromov-weak atomic topology (see Theorem \ref{thm_polish}), and $\U^N_t \Rightarrow \U_t$, the result follows by the Portmanteau theorem.

\begin{appendix}
\section{Skorohod topology}\label{sec.Skorohod}

The space $D((0,\infty),\mathcal S^\downarrow)$ is equipped with the Skorohod topology, which is induced by the following metric 
\begin{equation}
\begin{split}
\dSK(f,g) := \dSKSeq(f,g) + \dSKMax(f,g)
\end{split}
\end{equation}
where 
\begin{align}
\dSKSeq(f,g)&:=\inf_{\lambda \in \Lambda} \left( \gamma(\lambda) \vee \int_0^\infty e^{-u-\frac{1}{u}} \rho_1(f,g,\lambda,u) du \right)\\
\dSKMax(f,g)&:=\inf_{\lambda \in \Lambda} \left( \gamma(\lambda) \vee \int_0^\infty e^{-u} \rho_2(f,g,\lambda,u) du \right),
\end{align}
with
\begin{align}
\rho_1(f,g,\lambda,u) &:= \sup_{t \ge 0}\dSeq(f(t \wedge u),g(\lambda(t) \wedge u))\wedge 1, \\
\rho_2(f,g,\lambda,u) &:=\sup_{t \ge 0}\dMax(f(t \wedge u),g(\lambda(t) \wedge u))\wedge 1
\end{align}
and $\Lambda$ is the set of strictly increasing surjective Lipschitz continuous functions $\lambda:[0,\infty) \to [0,\infty)$ such that 
\begin{equation}
\begin{split}
\gamma(\lambda) := \text{ess~sup}_{s> t \ge 0}\left|\log\left(\frac{\lambda(s)-\lambda(t)}{s-t} \right)\right| < \infty.
\end{split} 
\end{equation}
Here 
\begin{align}
\dSeq(x,y) &:= \sum_{i = 1}^\infty |x_i-y_i|,\\
\dMax(x,y) &:= \max_{i \in \mathbb N} |x_i-y_i|.
\end{align}

We note that the idea for the definition of the first term of $\dSK$ follows the same idea as in the case where one wants to 
include cadlag functions defined on $\mathbb R_+$ (and not only on compact intervals) and one can use the same techniques as 
for example in Section 3 of \cite{EK86} to prove that 

\begin{proposition}
\item[(i)] The space $D((0,\infty),\mathcal S^{\downarrow})$ equipped with $\dSK$ is a complete separable metric space. 
\item[(ii)] The characterization of convergence in this topology is analogue to the characterization given in Proposition  3.6.5 in \cite{EK86}: \par 
 Let $f,f_1,f_2,\ldots \in D((0,\infty),\mathcal S^{\downarrow})$. Then $\lim_{n \rightarrow \infty}\dSK(f_n,f) = 0$
 if and only if whenever $t,t_1,t_2,\ldots \in [0,\infty)$, and $\lim_{n \rightarrow \infty} t_n = t$, the following conditions holds.
 \begin{itemize}
  \item[(a)] If $t > 0$ then 
  $\lim_{n \rightarrow \infty} \dSeq(f_n(t_n),f(t)) \wedge \dSeq(f_n(t_n),f(t-)) = 0$ and if 
  $t = 0$, then $\lim_{n \rightarrow \infty} \dMax(f_n(t_n),f(t))  = 0$.
  \item[(b)] If $t > 0$, $\lim_{n \rightarrow \infty} \dSeq(f_n(t_n),f(t))= 0$ and $s_n \ge t_n$ for each $n$, and 
  $\lim_{n \rightarrow \infty}s_n = t$, then $\lim_{n \rightarrow \infty} \dSeq(f_n(s_n),f(t)) = 0$.
  \item[(c)] If $t > 0$, $\lim_{n \rightarrow \infty} \dSeq(f_n(t_n),f(t-))= 0$ and $0 \le s_n \le t_n$ for each $n$, and 
  $\lim_{n \rightarrow \infty}s_n = t$, then $\lim_{n \rightarrow \infty} \dSeq(f_n(s_n),f(t-)) = 0$.
 \end{itemize}
\end{proposition}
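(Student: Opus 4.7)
The plan is to follow the construction of the classical Skorohod metric on $D([0,\infty), E)$ given in Sections 3.5 and 3.6 of \cite{EK86}, with modifications to handle (i) the open domain $(0,\infty)$ instead of $[0,\infty)$, and (ii) the fact that $\dSK$ is built from two different metrics on $\mathcal S^\downarrow$. The role of the weight $e^{-u-1/u}$ in $\dSKSeq$ is precisely to suppress the behavior as $u \downarrow 0$: since $\f(\mfu,\cdot)$ in general only converges as $h \downarrow 0$ in $\dMax$ (cf.\ Proposition \ref{thm_basic_tools} (ii)), one cannot demand $\dSeq$-control uniformly in small $u$, while the weight $e^{-u}$ in $\dSKMax$ retains the weaker $\dMax$ information near zero.

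First I would verify that $\dSK$ is a metric. Symmetry and the triangle inequality follow by the standard composition of time changes: if $\lambda_1, \lambda_2 \in \Lambda$, then $\lambda_2 \circ \lambda_1 \in \Lambda$ with $\gamma(\lambda_2 \circ \lambda_1) \le \gamma(\lambda_1) + \gamma(\lambda_2)$, combined with the triangle inequalities for $\dSeq$ and $\dMax$. For separation, if $\dSK(f,g) = 0$, one chooses minimizing $\lambda_n \in \Lambda$ with $\gamma(\lambda_n) \to 0$ and both weighted sup-integrals tending to $0$; the first condition forces $\lambda_n \to \mathrm{id}$ uniformly on compact sets, and right-continuity of $f$ and $g$ at each $t > 0$ then yields $f(t) = g(t)$. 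For separability, $\mathcal S^\downarrow$ is separable under $\dSeq$ as a subset of $\ell^1(\mathbb N)$, and cadlag step functions with rational positive jump times and values in a countable $\dSeq$-dense subset are $\dSK$-dense in $D((0,\infty), \mathcal S^\downarrow)$, using a large-$u$ truncation to control the tails of both integrals.

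For completeness I would adapt the argument of Theorem 3.5.6 in \cite{EK86}. Given a $\dSK$-Cauchy sequence $(f_n)$, one extracts time changes $\lambda_{m,n}$ with $\gamma(\lambda_{m,n}) \to 0$ as $m,n \to \infty$ and with controlled weighted sup-distances; the standard concatenation/diagonal procedure then produces time changes $\tilde\lambda_n \to \mathrm{id}$ such that $f_n \circ \tilde\lambda_n$ converges uniformly on compact subsets of $(0,\infty)$ in $\dSeq$ and on compact subsets of $[0,\infty)$ in $\dMax$. The limit $f$ lies in $D((0,\infty),\mathcal S^\downarrow)$ since right-continuity and existence of left limits at each $t > 0$ transfer from the $f_n$ via the $\dSeq$-uniform convergence on a $\dSeq$-neighborhood of $t$. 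The main technical obstacle will be exactly this last step: the interplay between the two metrics on $\mathcal S^\downarrow$ forces one to invoke compactness of $\mathcal S^\downarrow_C$ under $\dMax$ (recall Remark \ref{rem.compact.S}), so one needs a uniform bound on the total mass of $\{f_n(t): n \in \mathbb N\}$ for $t$ in compact subsets of $(0,\infty)$, which follows from the Cauchy property and the $\dMax$-control furnished by $\dSKMax$.

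For part (ii), the characterization of convergence is deduced by applying the translation between weighted-integral convergence and uniform-on-compacts-with-time-change convergence (proved in \cite{EK86} Lemma 3.5.1 and Proposition 3.6.5) separately to $\dSKSeq$ and $\dSKMax$. Convergence $\dSK(f_n,f) \to 0$ is equivalent to the existence of $\lambda_n \in \Lambda$ with $\gamma(\lambda_n) \to 0$ such that $f_n \circ \lambda_n \to f$ uniformly on compact subsets of $[0,\infty)$ in $\dMax$ and on compact subsets of $(0,\infty)$ in $\dSeq$. The pointwise conditions (a), (b), (c) then follow from this characterization by exactly the same sequence-extraction argument as in the proof of Proposition 3.6.5 in \cite{EK86}; the only new point is that the $t = 0$ case in (a) uses $\dMax$ rather than $\dSeq$, which reflects the fact that $\dSKSeq$ imposes no constraint as $u \downarrow 0$ because of the $e^{-1/u}$ factor.
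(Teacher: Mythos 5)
Your proposal follows exactly the route the paper takes: the paper offers no written-out proof but simply asserts that the definition of $\dSKSeq$ mimics the standard extension of the Skorohod metric to unbounded time domains and that "the same techniques as in Section 3 of \cite{EK86}" apply, which is precisely the adaptation you carry out. Your additional observations — that the weight $e^{-u-1/u}$ suppresses the $\dSeq$-behaviour near $u=0$ while $\dSKMax$ retains $\dMax$-control there, and that completeness requires the compactness of $\mathcal S^\downarrow_C$ under $\dMax$ together with a total-mass bound (since $(\mathcal S^\downarrow,\dMax)$ alone is not complete) — are correct and fill in details the paper leaves implicit.
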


\end{appendix}

\newpage


\end{document}